\newcommand{\KL}{{\rm KL}}
\newcommand{\Xcal}{\mathcal{X}}
\newcommand{\dettwo}{\mathrm{det_2}}
\newcommand{\SymTr}{\mathrm{SymTr}}
\newcommand{\JS}{\mathrm{JS}}
\newcommand{\bP}{\mathbb{P}}
\newcommand{\Hcal}{\mathcal{H}}
\newcommand{\Nbb}{\mathbb{N}}
\newcommand{\Ubb}{\mathbb{U}}
\newcommand{\PC}{\mathscr{PC}}
\newcommand{\Pcal}{\mathcal{P}}
\newcommand{\Gauss}{\mathrm{Gauss}}
\newcommand{\Sym}{\mathrm{Sym}}
\newcommand{\Tr}{\mathrm{Tr}}
\newcommand{\HS}{\mathrm{HS}}
\newcommand{\Ncal}{\mathcal{N}}
\newcommand{\trace}{\mathrm{tr}}
\newcommand{\R}{\mathbb{R}}
\newcommand{\la}{\ensuremath{\langle}}
\newcommand{\ra}{\ensuremath{\rangle}}
\newcommand{\mapto}{\ensuremath{\rightarrow}}
\newcommand{\Lcal}{\mathcal{L}}
\newcommand{\range}{\mathrm{range}}
\newcommand{\approach}{\ensuremath{\rightarrow}}
\newcommand{\equivalent}{\ensuremath{\iff}}
\newcommand{\tr}{\mathrm{tr}}
\newcommand{\imply}{\ensuremath{\Rightarrow}}
\newcommand{\SymHS}{\mathrm{SymHS}}
\newcommand{\Bsc}{\mathscr{B}}
\newcommand{\Fcal}{\mathcal{F}}
\newcommand{\JSD}{\mathrm{JSD}}
\newcommand{\logdet}{\mathrm{logdet}}
\newcommand{\traceX}{\mathrm{tr_X}}
\newcommand{\detX}{\mathrm{det_X}}
\newcommand{\trX}{\mathrm{tr_X}}
\newcommand{\etr}{\mathrm{tr_X}}
\newcommand{\onebf}{\mathbf{1}}
\newcommand{\Csc}{\mathscr{C}}
\title{Geometric Jensen-Shannon Divergence Between Gaussian Measures On Hilbert Space}
\titlerunning{Geometric Jensen-Shannon Divergence Between Equivalent Gaussian Measures}
\author{H\`a Quang Minh
	\and
	Frank Nielsen
	}
\authorrunning{H.Q. Minh and F. Nielsen}
\institute{RIKEN Center for Advanced Intelligence Project, Tokyo, Japan
\email{minh.haquang@riken.jp}
\and
Sony Computer Science Laboratories, Tokyo, Japan\\
\email{frank.nielsen.x@gmail.com}}
\begin{document}
\maketitle              

\begin{abstract}
	This work studies the Geometric Jensen-Shannon divergence, based on the notion of geometric mean of probability measures, in the setting of Gaussian measures on an infinite-dimensional Hilbert space. On the set of all Gaussian measures equivalent to a fixed one, we present a closed form expression for this divergence that directly generalizes the finite-dimensional version. Using the notion of Log-Determinant divergences between positive definite unitized trace class operators, we then define a Regularized Geometric Jensen-Shannon divergence that is valid for any pair of Gaussian measures and that recovers the exact Geometric Jensen-Shannon divergence  between two equivalent Gaussian measures when the regularization parameter
	tends to
	zero.
	
	\keywords{Jensen-Shannon divergence  \and Geometric mean \and Gaussian measures \and Hilbert space \and positive trace class operators}
\end{abstract}

	\section{Introduction}
\label{section:introduction}

We first briefly review the
Jensen-Shannon divergence between two probability measures and its generalization via weighted abstract means. 	
In the following, let $\Xcal$ be a Polish space
and $\Bsc(\Xcal)$ its Borel $\sigma$-algebra. Let $\Pcal(\Xcal)$ denote the set of probability measures on $(\Xcal, \Bsc(\Xcal))$.	
The 
Kullback-Leibler 
(KL) divergence \cite{Kullback1951information} between two probability measures $P,Q \in \Pcal(\Xcal)$ is defined by
\begin{align}
	\KL(P||Q) = 
	\left\{
	\begin{matrix}
		\int_{\Xcal}\log\left\{\frac{dP}{dQ}(x)\right\}dP(x) & \text{if $P \ll Q$},
		\\
		\infty & \text{otherwise}.
	\end{matrix}
	\right.
\end{align} 
Here $P \ll Q$ means $P$ is absolutely continuous with respect to $Q$.
The KL divergence is asymmetric and one of its most well-known symmetrizations is the
Jensen-Shannon divergence, which is defined by \cite{lin1991JensenShannondivergence}
\begin{align}
	\label{equation:Jensen-Shannon-original}
	\JS(P||Q) = \frac{1}{2}\KL\left(P\middle\|\frac{P+Q}{2}\right) + \frac{1}{2}\KL\left(Q\middle\|\frac{P+Q}{2}\right).
\end{align}
While $\KL(P||Q)$ can be infinite,
$\JS(P||Q)$ is always finite, since we always have $P\ll \frac{P+Q}{2},Q\ll \frac{P+Q}{2}$.
 Furthermore, its square root, namely $\JSD(P||Q) 
=\sqrt{\JS(P||Q)}$, called Jensen-Shannon distance, satisfies the triangle inequality \cite{endres2003JensenShannonDistance} and thus is a metric
on $\Pcal(\Xcal)$. Practical applications of $\JS/\JSD$ are many,
including
bioinformatics \cite{sims2009alignment},
social systems \cite{dedeo2013bootstrap}, and
generative adversarial networks (GAN) in deep learning \cite{goodfellow2014generative}.





{\bf Abstract means}. The probability measure $\frac{P+Q}{2}$ is the {\it arithmetic mean} of $P$ and $Q$, which is 
a weighted mean. Let $I \subset \R$. An {\it abstract mean} 
\cite{nielsen2019geometricJensenShannon} is a  function $M: I \times I \mapto I$ such that $\inf\{x,y\} \leq M(x,y) \leq \sup\{x,y\}$ $\forall x,y \in I$. An abstract mean $M$ is said to be {\it 
	regular} if it is symmetric, continuous,
homogeneous, 
and increasing in each variable. Let $M$ be a regular mean and $\alpha \in [0,1]$, then, using the dyadic expansion in \cite{Niculescu2003ConvexMeans}, there is a corresponding {\it weighted mean} $M_{\alpha}$ such that $M_0(x,y) =x, M_1(x,y) = y$.
Common weighted means, with $I = (0, \infty)$, include: 
{\it Arithmetic mean} $A_{\alpha}(x,y) = (1-\alpha)x + \alpha y$;
{\it Geometric mean} $G_{\alpha}(x,y) = x^{1-\alpha}y^{\alpha}$; 
{\it Harmonic mean} $H_{\alpha}(x,y) = \frac{xy}{(1-\alpha) y + \alpha x}$.
These are instances of the
{\it quasi-arithmetic means} (or
Kolmogorov–Nagumo means) $M^h_{\alpha}(x,y) = h^{-1}((1-\alpha)h(x) + \alpha h(y))$, 
where $h:I \mapto \R$ is strictly monotonous.
The geometric mean $G_{\alpha}$, e.g.,
corresponds to $h(u) = \log(u)$. By employing abstract means, 
we have a general framework of $M$-Jensen-Shannon divergences \cite{nielsen2019geometricJensenShannon}, of which Eq.\eqref{equation:Jensen-Shannon-original}
is a special case. 

{\bf Statistical mixture via weighted means \cite{nielsen2019geometricJensenShannon}}.
Let $\alpha \in [0,1]$. Let $M_{\alpha}$ be a given weighted mean.
Let $P, Q \in \Pcal(\Xcal)$. Let $\mu$ be a positive measure on $(\Xcal, \Bsc(\Xcal))$ such that $P\ll \mu, Q\ll \mu$.
We note that such a $\mu$ always exists, for example we can take $\mu = \frac{P+Q}{2}$ as in Eq.\eqref{equation:Jensen-Shannon-original}.
Let $p= \frac{dP}{d\mu}$, $q = \frac{dQ}{d\mu}$ be the corresponding Radon-Nikodym densities with respect to $\mu$. 
The $M_{\alpha}$-interpolation $(pq)^M_{\alpha}$, or $\alpha$-weighted $M$-mixture, of $p$ and $q$ is defined to be 
\begin{align}
	(pq)^M_{\alpha}(x) := \frac{M_{\alpha}(p(x), q(x))}{Z^M_{\alpha}(p:q)},
\end{align}
where 
$Z^M_{\alpha}(p:q) = \int_{\Xcal}M_{\alpha}(p(x), q(x))\; d\mu(x)$
is the normalizing factor.
We call $(PQ)^M_{\alpha}$, the probability measure in $\Pcal(\Xcal)$ whose Radon-Nikodym density with respect to $\mu$ is $(pq)^M_{\alpha}(x)$, the $\alpha$-weighted $M$-mixture of $P$ and $Q$.

{\bf Weighted geometric mixture of Gaussian measures on $\R^n$}.
For $n \in \Nbb$, 
let $\Sym^{++}(n)$ denote the set of real $n \times n$ symmetric positive definite matrices.  
For $m_i \in \R^n, C_i \in \Sym^{++}(n)$, $i=0,1$, consider the Gaussian measures $\mu_i=\Ncal(m_i,C_i)$ with densities with respect to the Lebesgue measure
$p_i(x) = \frac{1}{\sqrt{(2\pi)^n\det(C_i)}}\exp\left(-\frac{1}{2}(x-m_i)^TC_i^{-1}(x-m_i)\right)$.
Their $\alpha$-weighted geometric mixture is the Gaussian measure with density \cite[Corollary 1]{nielsen2019geometricJensenShannon}
\begin{align}
	\label{equation:geometric-interpolation-Gaussian-finite}
	(p_0p_1)^G_{\alpha}(x) = \Ncal(\mu_{\alpha}, C_{\alpha})(x),
	\text{ where }
	C_{\alpha} &= [(1-\alpha)C_0^{-1} + \alpha C_1^{-1}]^{-1},
	\nonumber
	\\
	m_{\alpha} &= C_{\alpha}[(1-\alpha)C_0^{-1}m_0 + \alpha C_1^{-1}m_1].
\end{align}
With the weighted $M$-mixture of $P$ and $Q$, the following is the generalization of the Jensen-Shannon divergence
in Eq.\eqref{equation:Jensen-Shannon-original}.

\textbf{$M$-Jensen-Shannon divergence}.
Let $M$ be a given abstract mean. 
Let $P,Q \in \Pcal(\Xcal,\mu)$. The $M$-Jensen-Shannon divergence between 
$P$ and $Q$ is defined to be
\begin{align}
	\JS_{M_\alpha}(P||Q) = (1-\alpha)\KL(P||(PQ)^M_{\alpha}) + \alpha \KL(Q||(PQ)^M_{\alpha}), \;\; 0 \leq \alpha \leq 1.
\end{align}
For $M_{1/2} = A_{1/2}$, we recover the original Jensen-Shannon divergence in Eq.\eqref{equation:Jensen-Shannon-original}.

{\bf Geometric Jensen-Shannon divergence between Gaussian measures on $\R^n$}.
The {\it arithmetic mean} of two Gaussian measures $P,Q$ is not a Gaussian measure and,
as of the current writing, no closed-form formula is known for 
$\JS(P||Q) = \JS_{A_{1/2}}(P||Q)$ (however, $\JS(P||Q)$ admits a closed form formula if $P,Q$ are Cauchy distributions \cite{nielsen2021FDivergengeCauchy}).
On the other hand, 
by Eq.\eqref{equation:geometric-interpolation-Gaussian-finite},
the {\it geometric mean} of two Gaussian densities is another closed form Gaussian density. Consequently, we
have the following explicit expression for the geometric Jensen-Shannon divergence between two Gaussian measures.
Let $\mu_i = \Ncal(m_i, C_i)$, $i=0,1$, with $C_i \in \Sym^{++}(n)$. 
For $M_{\alpha} = G_{\alpha}$, $\alpha \in [0,1]$, we have \cite[Corollary 1]{nielsen2019geometricJensenShannon}
\begin{align}
	\label{equation:geometric-Jensen-Shannon-Gaussian-finite}
	&\JS_{G_{\alpha}}(\mu_0||\mu_1) = 
	\frac{(1-\alpha)}{2}||C_{\alpha}^{-1/2}(m_0- m_{\alpha})||^2 
	+\frac{\alpha}{2}||C_{\alpha}^{-1/2}(m_1- m_{\alpha})||^2
	\nonumber
	\\
	&\quad -\frac{1}{2}\log\frac{\det(C_0)^{1-\alpha}\det(C_1)^{\alpha}}{\det(C_{\alpha})} 
	+ \frac{1}{2}\trace[C_{\alpha}^{-1}((1-\alpha)C_0 + \alpha C_1) - I].
\end{align}

	\section{Background for the infinite-dimensional setting}
	\label{section:infinite}
	In this work, we generalize the
	weighted geometric mean for Gaussian measures on $\R^n$ in 
	Eq.\eqref{equation:geometric-interpolation-Gaussian-finite} and the corresponding geometric Jensen-Shannon divergence in Eq.\eqref{equation:geometric-Jensen-Shannon-Gaussian-finite} to the 
	infinite-dimensional 
	setting of Gaussian measures on a
	Hilbert space $\Hcal$. 
	Throughout the following, let $(\Hcal, \la, \ra)$ be a real, separable Hilbert space, with $\dim(\Hcal) = \infty$ unless explicitly stated otherwise.
	For two 
	Hilbert spaces {\color{black}$(\Hcal_i, \la,\ra_i)$},$i=1,2$, let $\Lcal(\Hcal_1,\Hcal_2)$ denote the Banach space of bounded linear operators from $\Hcal_1$ to $\Hcal_2$, with operator norm $||A||=\sup_{||x||_1\leq 1}||Ax||_2$.
	For $\Hcal_1=\Hcal_2 = \Hcal$, we write $\Lcal(\Hcal)$.
	Let $\Sym(\Hcal) \subset \Lcal(\Hcal)$ be the set of bounded, self-adjoint linear operators on $\Hcal$. Let $\Sym^{+}(\Hcal) \subset \Sym(\Hcal)$ be the set of
	self-adjoint, {\it positive} operators on $\Hcal$, i.e. $A \in \Sym^{+}(\Hcal) \equivalent A^{*}=A, \la Ax,x\ra \geq 0 \forall x \in \Hcal$, where $A^{*}$ is the adjoint of $A$, which is the unique operator in $\Lcal(\Hcal)$ satisfying $\la Ax, y\ra = \la x, A^{*}y\ra \forall x,y \in \Hcal$.
	Let $\Sym^{++}(\Hcal)\subset \Sym^{+}(\Hcal)$ be the set of self-adjoint, {\it strictly positive} operator on $\Hcal$,
	i.e $A \in \Sym^{++}(\Hcal) \equivalent A^{*}=A, \la x, Ax\ra > 0$ $\forall x\in \Hcal, x \neq 0$.
	We write $A \geq 0$ for $A \in \Sym^{+}(\Hcal)$ and $A > 0$ for $A \in \Sym^{++}(\Hcal)$.
	{\color{black}
		We say that $A \in  \Sym(\Hcal)$ is 
		{\it positive definite} if $\exists M_A > 0$ such that $\la x, Ax\ra \geq M_A||x||^2$ $\forall x \in \Hcal$ - this
		is equivalent to $A$ being both strictly positive and invertible, see e.g. \cite{Petryshyn:1962}.}
	We denote by $\bP(\Hcal)$ the set of all positive definite operators on $\Hcal$.
	The Banach space $\Tr(\Hcal)$  of trace class operators on $\Hcal$ is defined by (see e.g. \cite{ReedSimon:Functional})
	$\Tr(\Hcal) = \{A \in \Lcal(\Hcal): ||A||_{\tr} = \sum_{k=1}^{\infty}\la e_k, (A^{*}A)^{1/2}e_k\ra < \infty\}$,
	for any orthonormal basis {\color{black}$\{e_k\}_{k \in \Nbb} \subset \Hcal$}.
	For $A \in \Tr(\Hcal)$, its trace is defined by $\trace(A) = \sum_{k=1}^{\infty}\la e_k, Ae_k\ra$, which is independent of choice of $\{e_k\}_{k\in \Nbb}$. 
	The Hilbert space $\HS(\Hcal_1,\Hcal_2)$ of Hilbert-Schmidt operators from $\Hcal_1$ to $\Hcal_2$ is defined by 
	(see e.g. \cite{Kadison:1983})
	$\HS(\Hcal_1, \Hcal_2) = \{A \in \Lcal(\Hcal_1, \Hcal_2):||A||^2_{\HS} = \trace(A^{*}A) =\sum_{k=1}^{\infty}||Ae_k||_2^2 < \infty\}$,
	for any orthonormal basis $\{e_k\}_{k \in \Nbb}$ in $\Hcal_1$,
	with inner product $\la A,B\ra_{\HS}=\trace(A^{*}B)$. For $\Hcal_1 = \Hcal_2 = \Hcal$, we write $\HS(\Hcal)$. 
	The set  $\Ubb(\Hcal)$ of unitary operators on $\Hcal$ is defined by $\Ubb(\Hcal) = \{U \in \Lcal(\Hcal): UU^{*} = U^{*}U = I\}$, where $I$ is the identity operator on $\Hcal$.
	
	{\bf Equivalence of Gaussian measures}. 
	Let $Q,R \in \Sym^{++}(\Hcal) \cap \Tr(\Hcal)$.
	Let $m_1, m_2 \in \Hcal$. 
	A fundamental result in the theory of Gaussian measures on an infinite-dimensional Hilbert space $\Hcal$ is the Feldman-Hajek Theorem \cite{Feldman:Gaussian1958}, \cite{Hajek:Gaussian1958}, which states that 
	two Gaussian measures $\mu = \Ncal(m_1,Q)$ and 
	$\nu = \Ncal(m_2, R)$ on $\Hcal$
	are either mutually singular or
	equivalent, {\color{black}that is either $\mu \perp \nu$ or $\mu \sim \nu$}.
	The necessary and sufficient conditions for
	the equivalence of the two Gaussian measures $\nu$ and $\mu$ are given by the following.
	\begin{theorem}
		[\cite{Bogachev:Gaussian}, Corollary 6.4.11, \cite{DaPrato:PDEHilbert}, Theorems  1.3.9 and 1.3.10]
		\label{theorem:Gaussian-equivalent}
		Let $\Hcal$ be a separable Hilbert space. Consider two Gaussian measures $\mu = \Ncal(m_1, Q)$,
		$\nu = \Ncal(m_2, R)$ on $\Hcal$. Then $\mu$ and $\nu$ are equivalent if and only if 
		the following
		conditions 
		both hold:
		(1) $m_2 - m_1 \in \range(Q^{1/2})$;
		(2) There exists  $S \in  \Sym(\Hcal) \cap \HS(\Hcal)$, $I -S > 0$,
		such that
		$R = Q^{1/2}(I-S)Q^{1/2}$.
	\end{theorem}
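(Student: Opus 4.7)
The plan is to separate the characterization into two independent parts, one for the mean shift and one for the covariance change, and combine them via transitivity of equivalence of measures. Concretely, I would show that $\mu = \Ncal(m_1, Q) \sim \nu = \Ncal(m_2, R)$ if and only if both $\Ncal(m_1, Q) \sim \Ncal(m_2, Q)$ and $\Ncal(m_2, Q) \sim \Ncal(m_2, R)$ hold, using the Feldman-H\'ajek dichotomy to argue that failure of either intermediate equivalence implies singularity of $\mu$ and $\nu$.

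For condition (1), I would prove the Cameron-Martin theorem. If $m_2 - m_1 = Q^{1/2} h$ for some $h \in \Hcal$, one writes down the explicit Radon-Nikodym derivative $d\Ncal(m_2, Q)/d\Ncal(m_1, Q) = \exp\bigl(W_h - \tfrac{1}{2}\|h\|^2\bigr)$, where $W_h$ is the $\Ncal(m_1, Q)$-measurable linear functional associated with $h$ via the isonormal process, and checks directly that this density integrates to one and reproduces the correct characteristic functional. Conversely, if $m_2 - m_1 \notin \range(Q^{1/2})$, a Hahn-Banach separation argument in the Cameron-Martin space combined with a zero-one law produces a measurable set of full $\mu$-measure and zero $\nu$-measure, forcing $\mu \perp \nu$.

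For condition (2), after reducing to centered measures by translation invariance of equivalence, I would diagonalize $Q = \sum_k q_k e_k \otimes e_k$ in an orthonormal eigenbasis, so that $\Ncal(0, Q)$ becomes a product of independent scalar Gaussians. Equivalence with $\Ncal(0, R)$ can then be tested via Kakutani's dichotomy for product measures, and an explicit computation of the Hellinger affinity reduces the question to the summability $\sum_k (\lambda_k - 1)^2 < \infty$, where $\lambda_k$ are eigenvalues of the operator formally given by $Q^{-1/2} R Q^{-1/2}$. This summability is exactly the Hilbert-Schmidt condition on $S = I - Q^{-1/2} R Q^{-1/2}$, which upon rearrangement yields the factorization $R = Q^{1/2}(I - S) Q^{1/2}$ with $S \in \Sym(\Hcal) \cap \HS(\Hcal)$; the strict positivity $I - S > 0$ is then inherited from strict positivity of $R$.

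The main obstacle is the unboundedness of $Q^{-1/2}$, since $Q$ compact forces its spectrum to accumulate at zero. The formal composition $Q^{-1/2} R Q^{-1/2}$ must be treated as a symmetric operator densely defined on $\range(Q^{1/2})$ and shown to extend to a bounded operator on all of $\Hcal$ under the equivalence hypothesis. This extension is essentially the range inclusion $\range(R^{1/2}) \subseteq \range(Q^{1/2})$, which one must establish as a necessary consequence of equivalence (it ensures the Cameron-Martin space of $R$ embeds into that of $Q$) before the Hilbert-Schmidt analysis of $S$ can be carried through cleanly. This is also the reason the condition is stated in the ``forward'' form $R = Q^{1/2}(I - S)Q^{1/2}$ rather than as $S = I - Q^{-1/2}RQ^{-1/2}$.
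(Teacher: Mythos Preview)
The paper does not prove this theorem; it is stated as a citation of Corollary 6.4.11 in \cite{Bogachev:Gaussian} and Theorems 1.3.9 and 1.3.10 in \cite{DaPrato:PDEHilbert}, and is used purely as background for the Feldman--H\'ajek dichotomy. So there is no in-paper proof to compare your proposal against.

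That said, your outline is essentially the standard argument one finds in those references: reduce to the two cases (same covariance, shifted mean) and (same mean, different covariance), handle the first via the Cameron--Martin theorem, and handle the second via Kakutani's product-measure dichotomy applied in the eigenbasis of $Q$. Your identification of the main technical obstacle---making sense of $Q^{-1/2}RQ^{-1/2}$ as a bounded operator when $Q^{-1/2}$ is unbounded, via the range inclusion $\range(R^{1/2}) \subset \range(Q^{1/2})$---is exactly right and is the content of Da Prato's Theorem 1.3.9. One point you leave slightly loose: the claim that $\mu \sim \nu$ forces both intermediate equivalences $\Ncal(m_1,Q)\sim\Ncal(m_2,Q)$ and $\Ncal(m_2,Q)\sim\Ncal(m_2,R)$ is not automatic from transitivity alone; one must first argue (e.g.\ from equality of the Cameron--Martin spaces, which is a consequence of equivalence) that $m_2-m_1\in\range(Q^{1/2})$, and then the first intermediate equivalence follows, after which the second comes by transitivity. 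This is a minor gap in the logic of the reduction, not in the two main analytic steps.
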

	
	
	
	


{\bf Kullback-Leibler (KL) divergence between equivalent Gaussian measures}.
Let $\Gauss(\Hcal)$ be the set of all Gaussian measures on $\Hcal$.
Let $\mu, \nu \in \Gauss(\Hcal)$.
If $\mu \perp \nu$, then
$\KL(\nu|| \mu) = \infty$. If $\mu \sim \nu$, then we have the following result, which plays a crucial role
in subsequent sections of the current work.
\begin{theorem}
	[\cite{Minh:2020regularizedDiv}]
	\label{theorem:KL-gaussian}
	Let $\mu = \Ncal(m_1, Q)$, $\nu = \Ncal(m_2, R)$, with $\ker(Q) = \ker{R} = \{0\}$, and
	$\mu \sim \nu$.
	Let $S \in \Sym(\Hcal) \cap \HS(\Hcal)$, $I-S > 0$, 
	be such that $R = Q^{1/2}(I-S)Q^{1/2}$, then
	\begin{align}
		\label{equation:KL-gaussian}
		\KL(\nu ||\mu) = \frac{1}{2}||Q^{-1/2}(m_2-m_1)||^2 -\frac{1}{2}\log\dettwo(I-S).
	\end{align}
\end{theorem}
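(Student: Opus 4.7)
The plan is to compute $\KL(\nu||\mu) = \int_{\Hcal}\log\frac{d\nu}{d\mu}\, d\nu$ by obtaining an explicit Radon-Nikodym derivative and splitting it into a mean-shift part and a covariance-shift part. Concretely, I would introduce the auxiliary centered measure $\tilde{\nu} = \Ncal(m_1, R)$ and factor $\frac{d\nu}{d\mu} = \frac{d\nu}{d\tilde{\nu}} \cdot \frac{d\tilde{\nu}}{d\mu}$, then handle the two factors independently before recombining under the expectation with respect to $\nu$.

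The mean-shift factor $\frac{d\nu}{d\tilde\nu}$ is supplied by the Cameron-Martin formula. Since $m_2 - m_1 \in \range(Q^{1/2})$ and $\ker(Q) = \{0\}$, the vector $h = Q^{-1/2}(m_2 - m_1)$ is a well-defined element of $\Hcal$. One first checks that $m_2 - m_1 \in \range(R^{1/2})$ as well, which follows from $R = Q^{1/2}(I-S)Q^{1/2}$ together with $I - S > 0$, so that a legitimate Cameron-Martin shift against $\tilde\nu$ is available. Taking the expectation of $\log\frac{d\nu}{d\tilde\nu}$ under $\nu$ then produces $\tfrac{1}{2}\|Q^{-1/2}(m_2 - m_1)\|^2$ after cancellation of the linear term, whose expectation is zero because $\nu$ has mean $m_2$.

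For the covariance-shift factor $\log\frac{d\tilde\nu}{d\mu}$, I would diagonalize the self-adjoint Hilbert-Schmidt operator $S$ via the spectral theorem: $Se_k = s_k e_k$ with $s_k < 1$ and $\sum_k s_k^2 < \infty$. Passing to the corresponding Karhunen-Loeve decomposition, the coordinates are jointly Gaussian under both measures, being standard normal under $\mu$ and independent Gaussians with variances $1 - s_k$ under $\tilde\nu$. On the $n$-dimensional truncation a direct finite-dimensional computation gives KL divergence $-\tfrac{1}{2}\sum_{k=1}^n [\log(1 - s_k) + s_k]$, which is $-\tfrac{1}{2}\log\prod_{k=1}^n (1 - s_k)\,e^{s_k}$.

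The main obstacle is the passage from the finite-dimensional truncation to the infinite-dimensional limit. Convergence of the series $\sum_k[-\log(1-s_k) - s_k]$ is guaranteed because $-\log(1-s) - s = O(s^2)$ near $s = 0$ together with $\sum_k s_k^2 = \|S\|_{\HS}^2 < \infty$; the limit is, by the very definition of the Carleman-Fredholm determinant, $-\log\dettwo(I-S)$. Upgrading this to an equality for the full KL divergence requires a Gelfand-Yaglom style projective-limit argument, expressing $\KL(\nu||\mu)$ as the supremum of finite-dimensional KL divergences taken along an increasing sequence of finite-dimensional projections onto $\myspan\{e_1,\ldots,e_n\}$ and invoking monotone (or martingale) convergence for the log-density sequence. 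This is the only genuinely infinite-dimensional step; everything else reduces to finite-dimensional Gaussian algebra.
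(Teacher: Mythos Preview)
The paper does not itself prove this theorem; it is quoted as a known result from \cite{Minh:2020regularizedDiv}, and the only related material reproduced in the paper is the explicit Radon--Nikodym density in Theorem~\ref{theorem:radon-nikodym-infinite}. So there is no ``paper's own proof'' to compare against, and your sketch has to be judged on its internal merits.

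There is a genuine slip in the mean-shift computation. You introduce $\tilde\nu=\Ncal(m_1,R)$ and apply Cameron--Martin \emph{relative to $\tilde\nu$}, which has covariance $R$, not $Q$. Hence
\[
\bE_{\nu}\Bigl[\log\tfrac{d\nu}{d\tilde\nu}\Bigr]=\tfrac{1}{2}\|R^{-1/2}(m_2-m_1)\|^2
=\tfrac{1}{2}\la h,(I-S)^{-1}h\ra,
\qquad h:=Q^{-1/2}(m_2-m_1),
\]
not $\tfrac{1}{2}\|Q^{-1/2}(m_2-m_1)\|^2$ as you claim. Correspondingly, in the covariance factor you silently replace $\bE_{\nu}[\log\tfrac{d\tilde\nu}{d\mu}]$ by $\bE_{\tilde\nu}[\log\tfrac{d\tilde\nu}{d\mu}]=\KL(\tilde\nu\|\mu)$: under $\nu$ the white-noise coordinates $W_{e_k}$ have mean $\la h,e_k\ra\neq 0$, so the quadratic form in $\log\tfrac{d\tilde\nu}{d\mu}$ acquires an extra constant
\[
-\tfrac{1}{2}\la h, S(I-S)^{-1}h\ra.
\]
The two errors happen to cancel, since $(I-S)^{-1}=I+S(I-S)^{-1}$ gives
$\tfrac{1}{2}\la h,(I-S)^{-1}h\ra-\tfrac{1}{2}\la h,S(I-S)^{-1}h\ra=\tfrac{1}{2}\|h\|^2$,
but your write-up does not account for either correction. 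A cleaner choice of intermediate measure is $\tilde\nu=\Ncal(m_2,Q)$ (same covariance as $\mu$): then the Cameron--Martin factor $\log\tfrac{d\tilde\nu}{d\mu}$ directly yields $\tfrac{1}{2}\|Q^{-1/2}(m_2-m_1)\|^2$ under $\bE_{\nu}$, and the covariance factor $\log\tfrac{d\nu}{d\tilde\nu}$ is centered at $m_2$ so no cross term appears. The spectral/limit part of your argument is fine.
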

Here $\dettwo$ is the Hilbert-Carleman determinant, see e.g. \cite{Simon:1977}, with $\dettwo(I+A) = \det[(I+A)\exp(-A)]$
for $A \in \HS(\Hcal)$, where $\det$ is the Fredholm determinant,
given by $\det(I+A) = \prod_{j=1}^{\infty}(1+\lambda_k(A))$, $A \in \Tr(\Hcal)$, $\{\lambda_k(A)\}_{k=1}^{\infty}$ being the eigenvalues of $A$.
	
	\section{Geometric Jensen-Shannon Divergence Between Equivalent Gaussian Measures}
	\label{section:geometric-JS-divergence-equivalent-Gaussian}
	
	{\bf From finite to infinite-dimensional setting}. 
	A Gaussian measure $\mu = \Ncal(m, C)$ on $\Hcal$ is said to be {\it nondegenerate} if $\ker(C) = \{0\}$, i.e. $C \in \Sym^{++}(\Hcal)\cap\Tr(\Hcal)$.
	If $\dim(\Hcal) = \infty$, then there is no natural reference measure on $\Hcal$, since the Lebesgue measure does not exist,
	and consequently the concept of Gaussian density functions with respect to the Lebesgue measure on $\R^n$ does not generalize on $\Hcal$.
	Thus Eqs. \eqref{equation:geometric-interpolation-Gaussian-finite}
	and \eqref{equation:geometric-Jensen-Shannon-Gaussian-finite}
	do not generalize to an arbitrary pair of nondegenerate measures in $\Gauss(\Hcal$). Instead, by the Feldman-Hajek Theorem, 
	we fix a measure $\mu_{*} \in \Gauss(\Hcal)$ and consider the set $\Gauss(\Hcal,\mu_{*})$ of all 
	Gaussian measures that are equivalent to $\mu_{*}$. Then the Radon-Nikodym density $\frac{d\mu}{d\mu_{*}}$ is well-defined and positive $\forall \mu \in \Gauss(\Hcal,\mu_{*})$. The geometric Jensen-Shannon divergence is then well-defined on the set
	$\Gauss(\Hcal,\mu_{*})$, which we subsequently focus on.
	Throughout the following, let $\mu_{*} = \Ncal(m_{*}, C_{*}) \in \Gauss(\Hcal)$
	with mean $m_{*} \in \Hcal$ and covariance operator $C_{*} \in \Sym^{++}(\Hcal) \cap \Tr(\Hcal)$.
	By Theorem \ref{theorem:Gaussian-equivalent},  the set $\Gauss(\Hcal,\mu_{*)}$ of all Gaussian measures on $\Hcal$ that are equivalent to $\mu_{*}$ is given by the following
	\begin{align}
		\label{equation:Gauss-H-mustar}
		\Gauss(\Hcal, \mu_{*}) = \left\{\mu = \Ncal(m,C): m - m_{*} \in \range(C_{*}^{1/2}), \right.
		\nonumber
		\\
		\left.\quad \quad \quad \quad \quad \quad \quad C = C_{*}^{1/2}(I-S)C_{*}^{1/2}, \; S \in \SymHS(\Hcal)_{< I}\right\},
	\end{align}
	where the set $\SymHS(\Hcal)_{<I}$ is given by
	\begin{align}
		\label{equation:SymHS<I}
		\SymHS(\Hcal)_{<I} = \{S \in \Sym(\Hcal) \cap \HS(\Hcal): I-S > 0\}.
	\end{align}
	We furthermore define the following subset of $\SymHS(\Hcal)_{<I}$
	\begin{align}
		\label{equation:SymTr<I}
		\SymTr(\Hcal)_{<I} = \{S \in \Sym(\Hcal) \cap \Tr(\Hcal): I-S > 0\}.
	\end{align}
	In \cite{Minh2024:FisherRaoGaussian}, the set $\Gauss(\Hcal, \mu_{*})$, in the zero-mean setting,
	is an infinite-dimensional Hilbert manifold on which the Fisher-Rao Riemannian metric is well-defined,
	giving it an infinite-dimensional Riemannian Cartan-Hadamard manifold structure. 
	As we show below, as is done in \cite{Minh2024:FisherRaoGaussian}, our analysis and explicit expressions are first obtained on the subset
	$\SymTr(\Hcal)_{<I}$, then extended to the set $\SymHS(\Hcal)_{<I}$ by a limiting process.
	
	For two measures $\mu_i=\Ncal(m_i, C_i) \in \Gauss(\Hcal,\mu_{*})$, $C_i = C_{*}^{1/2}(I-S_i)C_{*}^{1/2}$, $i=0,1$, their Radon-Nikodym densities 
	$p_i = \frac{d\mu_i}{d\mu_{*}}$ with respect to $\mu_{*}$
	admit closed forms (presented in Theorem \ref{theorem:radon-nikodym-infinite} in Section \ref{section:Radon-Nikodym-density-equivalent-Gaussian-Hilbert-space}).
	The following gives then the closed form formula for their geometric interpolation.
	\begin{theorem}
		[\textbf{Geometric interpolation of two equivalent Gaussian measures on Hilbert space}]
		\label{theorem:geometric-interpolation-equivalent-Gaussian-Hilbert-space}
		Let $\mu_i = \Ncal(m_i, C_i) \in \Gauss(\Hcal, \mu_{*})$, $i=0,1$, where
		$C_i = C_{*}^{1/2}(I-S_i)C_{*}^{1/2}$, $S_i \in \SymHS(\Hcal)_{<I}$. Let the corresponding 
		Radon-Nikodym densities with respect to $\mu_{*}$ be denoted by $p_i = \frac{d\mu_i}{d\mu_{*}}$. Then
		for $\alpha \in [0,1]$,
		\begin{align}
			(p_0p_1)^{G}_{\alpha}(x) = \frac{d\mu_{\alpha}}{d\mu_{*}},
		\end{align}
		where $\mu_{\alpha} = \Ncal(m_{\alpha}, C_{\alpha}) \in \Gauss(\Hcal,\mu_{*})$, $C_{\alpha} = C_{*}^{1/2}(I-S_{\alpha})C_{*}^{1/2}$, with
		\begin{align}
			S_{\alpha} &= I - [(1-\alpha)(I-S_0)^{-1} + \alpha (I-S_1)^{-1}]^{-1} \in \SymHS(\Hcal)_{< I},
			\label{equation:S-alpha}
			\\
			C_{\alpha} &= C_{*}^{1/2} [(1-\alpha)(I-S_0)^{-1} + \alpha (I-S_1)^{-1}]^{-1}C_{*}^{1/2},
			\label{equation:C-alpha}
			\\
			m_{\alpha} &= m_{*} + C_{*}^{1/2}(I-S_{\alpha})[(1-\alpha)(I-S_0)^{-1}C_{*}^{-1/2}(m_0 - m_{*}) 
			\nonumber
			\\
			&\quad \quad \quad \quad \quad \quad \quad + \alpha(I-S_1)^{-1}C_{*}^{-1/2}(m_1 - m_{*})].
			\label{equation:m-alpha}
		\end{align}
	The normalizing factor is given by
	\begin{align}
		&Z_{\alpha}^G(p_0:p_1)
		\nonumber
		= 
		\int_{\Hcal}p_0^{1-\alpha}(x)p_1^{\alpha}(x) d\mu_{*}(x)
		\\
		&=\det[(1-\alpha)(I+A)^{-\alpha} + \alpha (I+A)^{1-\alpha}]^{-1/2}
		\nonumber
		\\
		& \quad \times \exp\left[-\frac{1-\alpha}{2}||(I-S_0)^{-1/2}C_{*}^{-1/2}(m_0 - m_{*})||^2\right]
		\nonumber
		\\
		& \quad \times 
		\exp\left[-\frac{\alpha}{2}||(I-S_1)^{-1/2}C_{*}^{-1/2}(m_1 - m_{*})||^2\right]
		\nonumber
		\\
		& \quad \times \exp\left(\frac{1}{2}||(I-S_{\alpha})^{-1/2}C_{*}^{-1/2}(m_{\alpha} - m_{*})||^2\right).
		\label{equation:normalizing-factor-HS}
	\end{align}
		Here $I + A = (I-S_1)^{-1/2}(I-S_0)(I-S_1)^{-1/2} > 0$, with $A =(I-S_1)^{-1/2}(S_1-S_0)(I-S_1)^{-1/2} \in \Sym(\Hcal)\cap \HS(\Hcal).$
		In particular, for $S_0, S_1 \in \SymTr(\Hcal)_{< I}$,
		\begin{align}
			(p_0p_1)^{G}_{\alpha}(x) &	=[\det(I-S_{\alpha})]^{-1/2}
			\nonumber
			\\
			& \quad \times \exp\left\{-\frac{1}{2}\la C_{*}^{-1/2}(x-m_{*}), S_{\alpha}(I-S_{\alpha})^{-1}C_{*}^{-1/2}(x-m_{*})\ra \right\}
			\nonumber
			\\
			&\quad \times \exp(\la C_{*}^{-1/2}(x-m_{*}), (I-S_{\alpha})^{-1}C_{*}^{-1/2}(m_{\alpha} - m_{*})\ra)
			\nonumber
			\\
			&\quad \times \exp\left[-\frac{1}{2}||(I-S_{\alpha})^{-1/2}C_{*}^{-1/2}(m_{\alpha} - m_{*})||^2\right].
			\label{equation:geometric-interpolation-Gaussian-S-Tr}
		\end{align}
Let $P_N = \sum_{k=1}^Ne_k \otimes e_k$, $N \in \Nbb$, be the orthogonal projection onto the $N$-dimensional subspace of $\Hcal$ spanned
by $\{e_k\}_{k=1}^N$, where $\{e_k\}_{k=1}^{\infty}$ are the orthonormal eigenvectors of $C_{*}$.
In the expression in Eq.\eqref{equation:geometric-interpolation-Gaussian-S-Tr},
\begin{align}
	&\la C_{*}^{-1/2}(x-m_{*}), S_{\alpha}(I-S_{\alpha})^{-1}C_{*}^{-1/2}(x-m_{*})\ra 
	\nonumber
	\\
	& \doteq \lim_{N \approach \infty} \la C_{*}^{-1/2}P_N(x-m_{*}), S_{\alpha}(I-S_{\alpha})^{-1}C_{*}^{-1/2}P_N(x-m_{*})\ra 
	\\
	&
	\la C_{*}^{-1/2}(x-m_{*}), (I-S_{\alpha})^{-1}C_{*}^{-1/2}(m_{\alpha} - m_{*})\ra 
	\nonumber
	\\
	&\doteq \lim_{N \approach \infty} \la C_{*}^{-1/2}P_N(x-m_{*}), (I-S_{*})^{-1}C_{*}^{-1/2}(m_{\alpha} - m_{*})\ra,
\end{align}
with the limits being in the 
$\Lcal^2(\Hcal, \mu_{*})$ sense.
	\end{theorem}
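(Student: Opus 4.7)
The plan is to establish all formulas first in the trace-class regime $S_0, S_1 \in \SymTr(\Hcal)_{<I}$, where the Radon-Nikodym densities $p_i = d\mu_i/d\mu_{*}$ admit the closed-form expressions provided by Theorem~\ref{theorem:radon-nikodym-infinite}, and then extend to the Hilbert-Schmidt case $S_0, S_1 \in \SymHS(\Hcal)_{<I}$ by replacing $S_i$ with the finite-rank truncations $P_N S_i P_N$ and passing to the $\Lcal^2(\Hcal, \mu_{*})$-limit in $N$, mirroring the approximation strategy of \cite{Minh2024:FisherRaoGaussian}.

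The core trace-class computation is a Gaussian completion of the square, carried out in the coordinates $w = C_{*}^{-1/2}(x - m_{*})$ and $v_i = C_{*}^{-1/2}(m_i - m_{*})$. Multiplying $p_0^{1-\alpha}(x)$ and $p_1^{\alpha}(x)$ produces the prefactor $\det(I-S_0)^{-(1-\alpha)/2}\det(I-S_1)^{-\alpha/2}$ together with an exponent whose quadratic and linear parts in $w$ are
\begin{align*}
-\tfrac{1}{2}\la w, [(1-\alpha)S_0(I-S_0)^{-1} + \alpha S_1(I-S_1)^{-1}] w\ra + \la (1-\alpha)(I-S_0)^{-1}v_0 + \alpha(I-S_1)^{-1}v_1, w\ra.
\end{align*}
Applying the identity $S(I-S)^{-1} = (I-S)^{-1} - I$ collapses the quadratic coefficient to $S_\alpha(I-S_\alpha)^{-1}$, with $S_\alpha$ as in \eqref{equation:S-alpha}, and rewrites the linear coefficient as $(I-S_\alpha)^{-1}\tilde v_\alpha$ with $\tilde v_\alpha = C_{*}^{-1/2}(m_\alpha - m_{*})$ matching \eqref{equation:m-alpha}. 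Completing the square in $w$ then expresses $p_0^{1-\alpha} p_1^{\alpha}$ as a constant $K_\alpha$ times $d\mu_\alpha/d\mu_{*}$; integrating against $\mu_{*}$ identifies $Z_\alpha^G(p_0:p_1) = K_\alpha$, so that $(p_0 p_1)^G_\alpha = d\mu_\alpha/d\mu_{*}$ reads off as \eqref{equation:geometric-interpolation-Gaussian-S-Tr}.

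To cast $K_\alpha$ into the form \eqref{equation:normalizing-factor-HS}, the three exponential mean-dependent factors appear directly from the completion of the square. For the determinant ratio, I use $I-S_0 = (I-S_1)^{1/2}(I+A)(I-S_1)^{1/2}$ together with $I-S_\alpha = (I-S_1)^{1/2}[(1-\alpha)(I+A)^{-1} + \alpha I]^{-1}(I-S_1)^{1/2}$, multiplicativity of the Fredholm determinant on trace-class perturbations of $I$, and the commuting functional-calculus identity
\begin{align*}
\bigl[(1-\alpha)(I+A)^{-1} + \alpha I\bigr](I+A)^{1-\alpha} = (1-\alpha)(I+A)^{-\alpha} + \alpha(I+A)^{1-\alpha}.
\end{align*}
After cancellations, the factors of $\det(I-S_1)$ and $\det(I+A)$ combine and this reduces $\det(I-S_\alpha)^{1/2}/[\det(I-S_0)^{(1-\alpha)/2}\det(I-S_1)^{\alpha/2}]$ to $\det[(1-\alpha)(I+A)^{-\alpha} + \alpha(I+A)^{1-\alpha}]^{-1/2}$.

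For the extension to $S_0, S_1 \in \SymHS(\Hcal)_{<I}$, the truncations $S_{i,N} = P_N S_i P_N$ are finite-rank, hence trace-class, with $I - S_{i,N}$ remaining positive definite (using the bound $\la y,S_i y\ra \le (1-M_i)\|y\|^2$ coming from the positive-definiteness of $I-S_i$), so the formulas just established apply to them. The exponential mean-dependent factors converge by strong convergence of $(I-S_{i,N})^{-1}$ to $(I-S_i)^{-1}$ applied to the fixed vectors $v_i$, while the inner products involving $C_{*}^{-1/2}(x-m_{*})$ are treated in the $\Lcal^2(\Hcal,\mu_{*})$ sense as in the theorem statement. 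The main obstacle is the determinant factor: one must verify that $\det[(1-\alpha)(I+A)^{-\alpha} + \alpha(I+A)^{1-\alpha}]$ is well-defined as a Fredholm determinant and continuous in $A$ in the Hilbert-Schmidt norm. This reduces to showing that $(1-\alpha)(I+A)^{-\alpha} + \alpha(I+A)^{1-\alpha} - I$ is trace class for every $A \in \Sym(\Hcal)\cap\HS(\Hcal)$, which follows from the functional-calculus Taylor expansion
\begin{align*}
(1-\alpha)(I+A)^{-\alpha} + \alpha(I+A)^{1-\alpha} = I + \tfrac{\alpha(1-\alpha)}{2}A^2 + O(\|A\|^3),
\end{align*}
in which the linear term in $A$ cancels by direct computation, leaving an $O(A^2)$ remainder that is trace class whenever $A$ is Hilbert-Schmidt.
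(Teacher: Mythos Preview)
Your trace-class argument is correct and closely mirrors the paper's: both compute $p_0^{1-\alpha}p_1^{\alpha}$ by completing the square in the exponent and recognizing the result as $K_\alpha \cdot d\mu_\alpha/d\mu_{*}$, and both rewrite the determinant ratio via $I+A = (I-S_1)^{-1/2}(I-S_0)(I-S_1)^{-1/2}$ exactly as you do. The paper computes $Z^G_\alpha$ by evaluating the Gaussian integral directly (via a Vitali argument, Proposition~\ref{proposition:exponetial-quadratic-whitenoise-Gaussian}), whereas your shortcut of recognizing $d\mu_\alpha/d\mu_{*}$ and using $\int d\mu_\alpha/d\mu_{*}\,d\mu_{*}=1$ is a legitimate and slightly cleaner alternative.

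The Hilbert--Schmidt extension, however, has a real gap. You show that each ingredient of the \emph{formula} for $Z^G_\alpha(p_{0,N}:p_{1,N})$ converges to the claimed HS expression, but you never connect this limit to the \emph{actual} normalizing integral $Z^G_\alpha(p_0:p_1)=\int p_0^{1-\alpha}p_1^{\alpha}\,d\mu_{*}$, nor do you explain why $(p_0p_1)^G_\alpha$ itself coincides with $d\mu_\alpha/d\mu_{*}$ in the HS case. Convergence of $S_{i,N}\to S_i$ in HS norm gives convergence of $\log p_{i,N}\to\log p_i$ in $\Lcal^2(\Hcal,\mu_{*})$, but passing to $p_0^{1-\alpha}p_1^{\alpha}$ in $\Lcal^1$ (so that the integrals converge) requires a uniform-integrability argument you do not supply. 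The paper handles this indirectly: it proves (Proposition~\ref{proposition:inner-logRN-mu-star} and Corollary~\ref{corollary:log-Radon-Nikodym-L2-convergence}) that $\log(d\mu^k/d\mu_{*})$ converges in $\Lcal^2(\Hcal,\nu)$ for \emph{every} $\nu\in\Gauss(\Hcal,\mu_{*})$, takes $\nu=\mu_\alpha$, identifies $\log\bigl(p_0^{1-\alpha}p_1^{\alpha}/Z\bigr)=\log(d\mu_\alpha/d\mu_{*})$ as $\Lcal^2(\Hcal,\mu_\alpha)$ elements, and then invokes a separate lemma (Proposition~\ref{proposition:KL-log-Radon-Nikodym-convergence}) to upgrade this to equality of measures, which only then forces $Z=Z^G_\alpha(p_0:p_1)$. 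Your sketch misses this closing step entirely.

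A smaller point: the Taylor expansion shows $(1-\alpha)(I+A)^{-\alpha}+\alpha(I+A)^{1-\alpha}-I\in\Tr(\Hcal)$, which gives well-definedness of the determinant, but HS-continuity of $A\mapsto\det[\cdots]$ is a separate estimate (the paper imports it from \cite{Minh:2022KullbackGaussian}, Theorem~11); your expansion could be pushed to give it, but you should say so explicitly.
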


With $\mu_{\alpha}$, $\alpha \in [0,1]$, given explicitly by Theorem \ref{theorem:geometric-interpolation-equivalent-Gaussian-Hilbert-space},
we next obtain the geometric Jensen-Shannon divergence between $\mu_0$ and $\mu_1$, as follows.
	\begin{theorem}
		[\textbf{Geometric Jensen-Shannon divergence between equivalent Gaussian measures}]
		\label{theorem:geometric-JS-equivalent-Gaussian-measures-Hilbert-space}
		Let $\mu_i = \Ncal(m_i, C_i) \in \Gauss(\Hcal, \mu_{*})$, $i=0,1$, with $C_i= C_{*}^{1/2}(I-S_i)C_{*}^{1/2}$, $S_i \in \SymHS(\Hcal)_{< I}$. Let $m_{\alpha} \in \Hcal, S_{\alpha} \in \SymHS(\Hcal)_{<I}, C_{\alpha} \in \Sym^{+}(\Hcal) \cap \Tr(\Hcal)$ be as in Theorem \ref{theorem:geometric-interpolation-equivalent-Gaussian-Hilbert-space}. Then
		\begin{align}
			&\JS_{G_{\alpha}}(\mu_0||\mu_1) = (1-\alpha)\KL(\mu_0||\mu_{\alpha}) + \alpha \KL(\mu_1||\mu_{\alpha})
			\\
			& = \frac{(1-\alpha)}{2}||C_{\alpha}^{-1/2}(m_0- m_{\alpha})||^2 - \frac{1-\alpha}{2}\log\dettwo[(I -S_{\alpha})^{-1}
			(I-S_0)]
			\nonumber
			\\
			&\quad +\frac{\alpha}{2}||C_{\alpha}^{-1/2}(m_1- m_{\alpha})||^2 -\frac{\alpha}{2}\log\dettwo[(I -S_{\alpha})^{-1}
			(I-S_1)].
		\end{align}
		In particular, for $S_0, S_1 \in \SymTr(\Hcal)_{< I}$,
		\begin{align}
			\JS_{G_{\alpha}}(\mu_0||\mu_1) &=\frac{(1-\alpha)}{2}||C_{\alpha}^{-1/2}(m_0- m_{\alpha})||^2 +\frac{\alpha}{2}||C_{\alpha}^{-1/2}(m_1- m_{\alpha})||^2
			\nonumber
			\\
			&\quad - \frac{1}{2}\log\frac{\det(I-S_0)^{1-\alpha}\det(I-S_1)^{\alpha}}{\det(I-S_{\alpha})}
			\nonumber
			\\
			&\quad  +\frac{1}{2}\trace[(I-S_{\alpha})^{-1}(S_{\alpha}-(1-\alpha)S_0- \alpha S_1)].
			\label{equation:geometric-JS-Gaussian-Hilbert-trace-class-S}
		\end{align}
	\end{theorem}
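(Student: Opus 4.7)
The plan is to apply Theorem \ref{theorem:KL-gaussian} to each KL divergence $\KL(\mu_i || \mu_{\alpha})$ for $i = 0,1$, and then form their convex combination. Since $\mu_0, \mu_1, \mu_{\alpha}$ all belong to $\Gauss(\Hcal, \mu_{*})$, transitivity of Feldman-Hajek equivalence gives $\mu_i \sim \mu_{\alpha}$, so both divergences are finite. Reapplying Theorem \ref{theorem:Gaussian-equivalent} with reference $\mu_{\alpha}$, I would define
$T_i := I - C_{\alpha}^{-1/2} C_i C_{\alpha}^{-1/2} = C_{\alpha}^{-1/2} C_{*}^{1/2}(S_i - S_{\alpha}) C_{*}^{1/2} C_{\alpha}^{-1/2}.$
That $T_i \in \SymHS(\Hcal)_{<I}$ follows by observing that $C_{\alpha}^{-1/2} C_{*}^{1/2}$ is bounded: a direct computation using $C_{\alpha}^{-1} = C_{*}^{-1/2}(I-S_{\alpha})^{-1}C_{*}^{-1/2}$ yields $||C_{\alpha}^{-1/2} C_{*}^{1/2} x||^2 = \la x, (I - S_{\alpha})^{-1} x\ra$, so the Hilbert-Schmidt property is inherited from $S_i - S_{\alpha} \in \HS(\Hcal)$, while strict positivity $I - T_i > 0$ comes from that of $C_i$. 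Theorem \ref{theorem:KL-gaussian} then delivers
$\KL(\mu_i || \mu_{\alpha}) = \frac{1}{2}||C_{\alpha}^{-1/2}(m_i - m_{\alpha})||^2 - \frac{1}{2}\log\dettwo(I - T_i).$

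The key remaining step is to identify $\dettwo(I - T_i) = \dettwo[(I - S_{\alpha})^{-1}(I - S_i)]$. Formally, $I - T_i = C_{\alpha}^{-1/2} C_i C_{\alpha}^{-1/2}$ is similar (via $C_{\alpha}^{1/2}$) to $C_{\alpha}^{-1} C_i = C_{*}^{-1/2}(I - S_{\alpha})^{-1}(I - S_i) C_{*}^{1/2}$, which is a further formal conjugation of $(I - S_{\alpha})^{-1}(I - S_i)$ by $C_{*}^{1/2}$. I would then invoke the similarity invariance of the Hilbert-Carleman determinant under bounded invertible transformations; since the second conjugation involves the unbounded $C_{*}^{-1/2}$, I would justify it via the same projection-truncation scheme used in Theorem \ref{theorem:geometric-interpolation-equivalent-Gaussian-Hilbert-space}: restrict everything to the $N$-dimensional range of $P_N = \sum_{k=1}^N e_k \otimes e_k$, apply the finite-dimensional identity, and pass to the limit $N \to \infty$ using continuity of $\dettwo$ on $\HS(\Hcal)$. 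Forming the convex combination $(1-\alpha)\KL(\mu_0 || \mu_{\alpha}) + \alpha \KL(\mu_1 || \mu_{\alpha})$ yields the first (Hilbert-Schmidt) display of the theorem.

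For the trace-class refinement, I would first note that Eq.\eqref{equation:S-alpha} forces $S_{\alpha} \in \SymTr(\Hcal)_{<I}$ whenever $S_0, S_1 \in \SymTr(\Hcal)_{<I}$: setting $A = (1-\alpha)(I-S_0)^{-1} + \alpha(I-S_1)^{-1}$, one has $A - I \in \Tr(\Hcal)$ (since $(I-S_j)^{-1} - I = S_j(I-S_j)^{-1} \in \Tr(\Hcal)$), whence $S_{\alpha} = I - A^{-1} = A^{-1}(A - I) \in \Tr(\Hcal)$. Then, applying the identity $\log\dettwo(I + B) = \log\det(I + B) - \trace(B)$ for $B \in \Tr(\Hcal)$ to $B = (I - S_{\alpha})^{-1}(S_{\alpha} - S_i)$, together with multiplicativity $\det[(I-S_{\alpha})^{-1}(I - S_i)] = \det(I - S_i)/\det(I - S_{\alpha})$, splits each $\log\dettwo$ term into a log-determinant contribution and a trace contribution. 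Taking the $(1-\alpha),\alpha$ weighted sum and collecting the trace terms into $\trace[(I - S_{\alpha})^{-1}(S_{\alpha} - (1-\alpha)S_0 - \alpha S_1)]$ reproduces Eq.\eqref{equation:geometric-JS-Gaussian-Hilbert-trace-class-S}. The main obstacle is the intermediate determinant identity: because the second similarity acts through the unbounded $C_{*}^{-1/2}$, a careful finite-dimensional approximation is required to transfer the identity from each $N$-truncation to the infinite-dimensional limit in a manner compatible with the Hilbert-Carleman definition.
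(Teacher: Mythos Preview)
Your overall architecture matches the paper exactly: apply the KL formula from Theorem \ref{theorem:KL-gaussian} to each $\KL(\mu_i\|\mu_{\alpha})$, take the $(1-\alpha,\alpha)$ convex combination, and for the trace-class case split $\log\dettwo(I+B)=\log\det(I+B)-\trace(B)$ with $B=(I-S_{\alpha})^{-1}(S_{\alpha}-S_i)$. That part is fine.

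The only substantive difference is how you establish the key identity $\dettwo(I-T_i)=\dettwo[(I-S_{\alpha})^{-1}(I-S_i)]$. Your route---similarity via $C_{\alpha}^{1/2}$ and then via $C_{*}^{1/2}$, patched by a projection-truncation limit---is more laborious than necessary, and note that the \emph{first} similarity already involves the unbounded $C_{\alpha}^{-1/2}$ (since $C_{\alpha}^{1/2}$ is compact, not boundedly invertible), so you cannot appeal to ``similarity under bounded invertible transformations'' there either; both steps would need the truncation argument. The paper sidesteps all of this with a one-line polar decomposition: writing $(I-S_{\alpha})^{1/2}C_{*}^{1/2}=VC_{\alpha}^{1/2}$ with $V\in\Ubb(\Hcal)$, one checks directly that $I-T_i = V^{*}(I-S_{\alpha})^{-1/2}(I-S_i)(I-S_{\alpha})^{-1/2}V$, a \emph{unitary} conjugate of a bounded operator. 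Since $\dettwo$ depends only on eigenvalues, unitary conjugation preserves it, giving the identity immediately with no unbounded operators and no limits. Your bounded-operator observation $B^{*}B=(I-S_{\alpha})^{-1}$ for $B=C_{\alpha}^{-1/2}C_{*}^{1/2}$ is actually the seed of this same polar decomposition; following it through yields the unitary $V$ directly and makes the truncation scheme unnecessary.
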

	
		{\bf Finite-dimensional setting}. 
		The following verifies 
		that for $\mu_i = \Ncal(m_i, C_i) \in \Gauss(\R^n)$, $m_i \in \R^n$, $C_i \in \Sym^{++}(n)$, $i=0,1$, from Theorem \ref{theorem:geometric-JS-equivalent-Gaussian-measures-Hilbert-space} we recover the finite-dimensional formula
		for $\JS_{G_{\alpha}}(\mu_0 ||\mu_1)$ in Eq.\eqref{equation:geometric-Jensen-Shannon-Gaussian-finite} (\cite{nielsen2019geometricJensenShannon}, Corollary 1), which can also be computed directly from the expression of the KL divergence between Gaussian densities on $\R^n$.
		\begin{corollary}
			\label{corollary:geometric-JS-Gaussian-finite}
			Let $\mu_i = \Ncal(m_i, C_i) \in \Gauss(\R^n)$, $i=0,1$. Then
			\begin{align}
					&\JS_{G_{\alpha}}(\mu_0 ||\mu_1) = \frac{(1-\alpha)}{2}||C_{\alpha}^{-1/2}(m_0- m_{\alpha})||^2 +\frac{\alpha}{2}||C_{\alpha}^{-1/2}(m_1- m_{\alpha})||^2
					\nonumber
					\\
					&\quad -\frac{1}{2}\log\frac{\det(C_0)^{1-\alpha}\det(C_1)^{\alpha}}{\det(C_{\alpha})} 
					+
					\frac{1}{2}\trace[C_{\alpha}^{-1}((1-\alpha)C_0 + \alpha C_1) - I].
				\end{align}
		\end{corollary}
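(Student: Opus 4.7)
The plan is to specialize Theorem~\ref{theorem:geometric-JS-equivalent-Gaussian-measures-Hilbert-space} (in the trace-class form of Eq.\eqref{equation:geometric-JS-Gaussian-Hilbert-trace-class-S}) to $\Hcal = \R^n$ and show that it reduces to the claimed formula independently of the choice of reference Gaussian $\mu_{*} = \Ncal(m_{*}, C_{*})$. Since every pair of nondegenerate Gaussians on $\R^n$ is mutually equivalent, we may fix any $C_{*} \in \Sym^{++}(n)$ and write $C_i = C_{*}^{1/2}(I-S_i)C_{*}^{1/2}$ with $S_i \in \Sym(n)$, $I-S_i > 0$, so in particular $S_i \in \SymTr(\R^n)_{< I}$. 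Then Eq.\eqref{equation:geometric-JS-Gaussian-Hilbert-trace-class-S} applies directly, and it remains to translate the $S$-terms back into $C$-terms.

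The first step is to match the log-determinant term. From $I-S_i = C_{*}^{-1/2}C_iC_{*}^{-1/2}$ we get $\det(I-S_i) = \det(C_i)/\det(C_{*})$, and similarly for $S_{\alpha}$. Since the exponents on $\det(C_{*})$ sum to $(1-\alpha)+\alpha - 1 = 0$ in the ratio, we obtain
\begin{align*}
\frac{\det(I-S_0)^{1-\alpha}\det(I-S_1)^{\alpha}}{\det(I-S_{\alpha})} = \frac{\det(C_0)^{1-\alpha}\det(C_1)^{\alpha}}{\det(C_{\alpha})},
\end{align*}
which matches the log-det term in Corollary~\ref{corollary:geometric-JS-Gaussian-finite}. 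The quadratic terms $||C_{\alpha}^{-1/2}(m_i - m_{\alpha})||^2$ are already expressed in terms of $C_{\alpha}$ and require no translation.

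The second step is to match the trace term. Using the identity $S_{\alpha} - (1-\alpha)S_0 - \alpha S_1 = -(I-S_{\alpha}) + (1-\alpha)(I-S_0) + \alpha(I-S_1)$, obtained by rewriting each $S$ as $I-(I-S)$, we find
\begin{align*}
\trace[(I-S_{\alpha})^{-1}(S_{\alpha}-(1-\alpha)S_0-\alpha S_1)] = \trace[(I-S_{\alpha})^{-1}((1-\alpha)(I-S_0)+\alpha(I-S_1))] - n.
\end{align*}
Next, from $C_{\alpha}^{-1} = C_{*}^{-1/2}(I-S_{\alpha})^{-1}C_{*}^{-1/2}$ and $(1-\alpha)C_0 + \alpha C_1 = C_{*}^{1/2}[(1-\alpha)(I-S_0)+\alpha(I-S_1)]C_{*}^{1/2}$, the cyclicity of the trace gives
\begin{align*}
\trace[C_{\alpha}^{-1}((1-\alpha)C_0+\alpha C_1)] = \trace[(I-S_{\alpha})^{-1}((1-\alpha)(I-S_0)+\alpha(I-S_1))],
\end{align*}
so the trace term in Eq.\eqref{equation:geometric-JS-Gaussian-Hilbert-trace-class-S} coincides with $\tfrac{1}{2}\trace[C_{\alpha}^{-1}((1-\alpha)C_0+\alpha C_1) - I]$.

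There is essentially no hard step: the proof is pure algebra once the reference $\mu_{*}$ is fixed. The only point that deserves care is checking that both the log-det and trace terms are invariant under the choice of $\mu_{*}$, which is what the cancellations above confirm; one could equivalently argue by taking $C_{*} = C_0$ (so $S_0 = 0$) to simplify the computation, but the general derivation makes the independence manifest. \qed
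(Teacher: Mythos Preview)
Your proposal is correct and follows essentially the same route as the paper: specialize the trace-class formula in Eq.\eqref{equation:geometric-JS-Gaussian-Hilbert-trace-class-S} to $\Hcal = \R^n$ and translate the $S$-terms back into $C$-terms via $I-S_i = C_{*}^{-1/2}C_iC_{*}^{-1/2}$. The only cosmetic difference is in the trace-term computation: the paper writes $S_{\alpha} - (1-\alpha)S_0 - \alpha S_1 = C_{*}^{-1/2}[(1-\alpha)C_0 + \alpha C_1 - C_{\alpha}]C_{*}^{-1/2}$ directly, whereas you first use the rearrangement $S = I-(I-S)$ and then invoke cyclicity of the trace; both are the same elementary algebra.
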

	
	\section{Regularized Geometric Jensen-Shannon Divergence Between Gaussian Measures On Hilbert Space}
	\label{section:regularized-JS-divergence}
	
	Theorem \ref{theorem:geometric-JS-equivalent-Gaussian-measures-Hilbert-space} is valid and finite for any pair of {\it equivalent} Gaussian measures
	$\mu_0, \mu_1$ on $\Hcal$. 
	In this section, we describe a regularized formulation of $\JS_{G_{\alpha}}(\mu_0||\mu_1)$
	that is valid and finite for {\it any pair} of Gaussian measures $\mu_0, \mu_1 \in \Gauss(\Hcal)$.
	It is based on the generalization of the Log-Det divergences on $\Sym^{++}(n)$
	to the infinite-dimensional setting of 
	positive definite unitized trace class operators on a Hilbert space \cite{Minh:LogDet2016}.
	We first note that for $\mu_i = \Ncal(m_i,C_i)$ on $\R^n$, $C_i \in \Sym^{++}(n)$,
	\begin{align}
		\label{equation:KL-Gaussian-finite}
		&	\KL(\mu_1 ||\mu_2)
		=\frac{1}{2}\la m_1 - m_2, C_2^{-1}(m_1 - m_2)\ra 
		+ \frac{1}{2}d^1_{\logdet}(C_1 , C_2), \;\; 
		\\
		\text{where }&d^{1}_{\logdet}(C_1,C_2) = \trace(C_2^{-1}C_1 - I) - \log\det(C_2^{-1}C_1).
		\label{equation:d-1-logdet-finite}
	\end{align}
	%

	{\bf Extended (unitized) trace class operators}. The Log-Det divergence $d^1_{\logdet}$ in Eq.\eqref{equation:d-1-logdet-finite} is {\it not} directly generalizable to
	the case $A,B \in \Sym^{++}(\Hcal)\cap \Tr(\Hcal)$ when $\dim(\Hcal) = \infty$, 
	since for any compact operator $A \in \Sym^{++}(\Hcal)$, both $A^{-1}$ and $\log(A)$ are unbounded (see \cite{Minh:LogDet2016,Minh:2024InfiniteDistanceSurvey} for more detail). 
	This can be resolved via the concepts of {\it extended (or unitized) trace class operators}, along with the {\it extended trace} and {\it extended Fredholm determinant}, as follows \cite{Minh:LogDet2016}.
Let $I$ be the identity operator on $\Hcal$.
Consider the algebra of {\it extended (or unitized) trace class operators}
defined by
\begin{equation}
	\Tr_X(\Hcal) = \{A + \gamma I \; : \; A \in \Tr(\Hcal), \;\gamma \in \R\}.
\end{equation}
This
set 
is a Banach algebra under the {\it extended trace norm}
\begin{equation}
	||A+\gamma I||_{\etr} = ||A||_{\trace} + |\gamma| = \trace |A| + |\gamma|.
\end{equation}
For 
{$(A+\gamma I) \in \Tr_X(\Hcal)$}, we define
its {\it extended trace} by
\begin{equation}
	\etr(A+\gamma I) = \tr(A) + \gamma, \;\;\;\text{with $\etr(I) = 1$, in contrast to $\trace(I) = \infty$}.
\end{equation}
%

{\bf Positive definite unitized trace-class operators}.
These are scalar perturbations of self-adjoint trace class operators on $\Hcal$, 
defined as
%
\begin{align}
	\PC_1(\Hcal) &= \bP(\Hcal) \cap \Tr_X(\Hcal)
	= \{A+\gamma I > 0 \; : \; A^{*} = A, A \in \Tr(\Hcal), \gamma \in \R\}. \nonumber
\end{align}
\textbf{Extended Fredholm determinant} \cite{Minh:LogDet2016}.
(i) 
Let {$(A+\gamma I) \in \PC_1(\Hcal)$}. Then $\log(A+\gamma I)$ is well-defined and furthermore $\log(A+\gamma I) \in \Tr_X(\Hcal)$.
The extended Fredholm determinant of {$(A+\gamma I)$} is defined to be
\begin{equation}
	\label{equation:extended-Fredholm-det-positive}
	\detX(A+\gamma I) = \exp(\etr[\log(A+\gamma I)]).
\end{equation}
(ii) More generally, for $(A+\gamma I) \in \Tr_X(\Hcal)$, Eq.\eqref{equation:extended-Fredholm-det-positive} can be generalized to 
\begin{align}
	\detX(A+\gamma I) = \gamma \det[I+(A/\gamma)],
\end{align}
where $\det$ is the classical Fredholm determinant \cite{Fredholm:1903,gohberg2012traces,Simon:1977}.

	{\bf Infinite-dimensional Log-Det divergence}.
	The extended trace and extended Fredholm determinant lead to the following infinite-dimensional generalization of the 
	Log-Det divergence (see \cite{Minh:LogDet2016} for the general definition of $d^{\alpha}_{\logdet}$, $-1\leq \alpha \leq 1$)
	\begin{align}
		&	d^{1}_{\logdet}[(A+\gamma I), (B+\mu I)] = (\frac{\gamma}{\mu}-1)\log\frac{\gamma}{\mu}
		\nonumber
		\\
		&\;\;\;\;\;+ \trX[(B+\mu I)^{-1}(A+\gamma I) - I] - \frac{\gamma}{\mu}\log\detX[(B+\mu I)^{-1}(A+\gamma I)].
		\label{equation:alpha+1}
	\end{align}
	In the case $\gamma = \mu$, $d^{\alpha}_{\logdet}[(A+\gamma I), (B+ \gamma I)]$ assumes a much simpler form, which directly generalizes the finite-dimensional formula
	\begin{align}
		d^{1}_{\logdet}[(A+\gamma I), (B+\gamma I)] &= 
		\trX[(B+\gamma I)^{-1}(A+\gamma I) - I]
		\nonumber 
		\\
		&\quad 
		-\log\detX[(B+\gamma I)^{-1}(A+\gamma I)].
	\end{align}
	{\bf Regularized geometric Jensen-Shannon divergence}.
	With the infinite-dimensional Log-Det divergence,
	we now formulate the regularized geometric Jensen-Shannon divergence between any pair of Gaussian measures in $\Gauss(\Hcal)$. Let
	$\mu_i = \Ncal(m_i, C_i) \in \Gauss(\Hcal)$, $i=0,1$. Let $\gamma>0, \gamma \in \R$ be fixed. Define the following regularized terms
	\begin{align}
		\label{equation:C-alpha-gamma}
		C_{\alpha, \gamma} &= [(1-\alpha)(C_0 + \gamma I)^{-1} + \alpha (C_1 + \gamma I)^{-1}]^{-1}, \; 0 \leq \alpha \leq 1,
		\\
		m_{\alpha,\gamma} &= C_{\alpha,\gamma}[(1-\alpha)(C_0+\gamma I)^{-1}m_0 + \alpha (C_1+\gamma I)^{-1}m_1].
		\label{equation:m-alpha-gamma}
	\end{align}
	The following shows that $C_{\alpha,\gamma}$ has the form $C_{\alpha,\gamma} = \gamma I +A \in \PC_1(\Hcal)$, for $A \in \Sym(\Hcal) \cap \Tr(\Hcal)$.
	\begin{lemma}
		\label{lemma:C-alpha-gamma-form}
		Let $\gamma \in \R, \gamma > 0$ be fixed. Let $C_{\alpha,\gamma}$ be as defined in Eq.\eqref{equation:C-alpha-gamma}. Then 
		$C_{\alpha,\gamma} = \gamma I + A \in \PC_1(\Hcal)$, where
		$A = \gamma (I-B)^{-1/2}B(I-B)^{-1/2}\in \Sym(\Hcal) \cap \Tr(\Hcal)$, $B = (1-\alpha)C_0(C_0+\gamma I)^{-1} + \alpha C_1(C_1+\gamma I)^{-1}\in \Sym(\Hcal) \cap \Tr(\Hcal)$.
	\end{lemma}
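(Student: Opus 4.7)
\textbf{Proof plan for Lemma \ref{lemma:C-alpha-gamma-form}.}
The plan is to massage each summand $(C_i+\gamma I)^{-1}$ into a form where the identity appears explicitly, combine them, and then read off both the scalar part $\gamma I$ and the trace-class part $A$. First I would apply the elementary identity
\begin{align*}
(C_i + \gamma I)^{-1} = \tfrac{1}{\gamma}\bigl[I - C_i(C_i+\gamma I)^{-1}\bigr], \quad i=0,1,
\end{align*}
which follows from $\gamma I = (C_i+\gamma I) - C_i$. Taking the convex combination on the right-hand side gives
\begin{align*}
(1-\alpha)(C_0+\gamma I)^{-1} + \alpha(C_1+\gamma I)^{-1} = \tfrac{1}{\gamma}(I - B),
\end{align*}
with $B$ exactly as defined in the statement.

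Next I would establish the key operator-theoretic properties of $B$. Self-adjointness is immediate, and positivity holds because each factor $C_i(C_i+\gamma I)^{-1}$ is positive (it is a function of the positive operator $C_i$ evaluated at the positive function $t \mapsto t/(t+\gamma)$). Since $C_i \in \Tr(\Hcal)$ and $(C_i+\gamma I)^{-1}\in\Lcal(\Hcal)$ is bounded by $1/\gamma$, the product $C_i(C_i+\gamma I)^{-1}$ is trace class, and therefore $B \in \Sym(\Hcal)\cap \Tr(\Hcal)$. The crucial point is that $\|B\|<1$: by the spectral mapping theorem, the spectrum of $C_i(C_i+\gamma I)^{-1}$ is contained in $[0,\lambda_{\max}(C_i)/(\lambda_{\max}(C_i)+\gamma)] \subset [0,1)$, so $\|C_i(C_i+\gamma I)^{-1}\| < 1$ for $\gamma>0$, and by the triangle inequality $\|B\|<1$. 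Hence $I-B>0$ is invertible with a well-defined, bounded, positive square root $(I-B)^{1/2}$.

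Inverting the identity from the first step yields $C_{\alpha,\gamma} = \gamma(I-B)^{-1}$. To separate out the identity, I would use
\begin{align*}
(I-B)^{-1} - I = (I-B)^{-1}\bigl[I-(I-B)\bigr] = (I-B)^{-1}B = (I-B)^{-1/2}B(I-B)^{-1/2},
\end{align*}
where the last equality uses that $B$ and $(I-B)^{-1/2}$ commute (both are functions of $B$). Multiplying through by $\gamma$ gives $C_{\alpha,\gamma} = \gamma I + A$ with $A = \gamma(I-B)^{-1/2}B(I-B)^{-1/2}$, as claimed. Since $B\in\Tr(\Hcal)$ and $(I-B)^{-1/2}\in\Lcal(\Hcal)$, the ideal property of $\Tr(\Hcal)$ yields $A\in \Sym(\Hcal)\cap\Tr(\Hcal)$; and since $B\geq 0$ we have $A\geq 0$, so $C_{\alpha,\gamma} \geq \gamma I > 0$, confirming $C_{\alpha,\gamma}\in \PC_1(\Hcal)$.

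The only genuinely delicate point will be the strict bound $\|B\|<1$; everything else is algebraic manipulation using the commuting functional calculus of $B$. If $\|B\|=1$ were allowed, $(I-B)^{-1}$ would be unbounded, and the decomposition would break. The spectral argument above avoids this, using that each $C_i$ is trace class (hence compact with largest eigenvalue strictly positive and finite).
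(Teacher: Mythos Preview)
Your proposal is correct and follows essentially the same route as the paper: both use the resolvent identity $(C_i+\gamma I)^{-1}=\tfrac{1}{\gamma}[I-C_i(C_i+\gamma I)^{-1}]$ to obtain $C_{\alpha,\gamma}=\gamma(I-B)^{-1}$, then rewrite $(I-B)^{-1}=I+(I-B)^{-1/2}B(I-B)^{-1/2}$. The only difference is that you supply an explicit spectral argument for $\|B\|<1$, whereas the paper simply asserts $B\in\SymTr(\Hcal)_{<I}$ without further comment.
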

	With $C_{\alpha,\gamma} \in \PC_1(\Hcal)$, the quantity $d^1_{\logdet}[(C+\mu I), C_{\alpha,\gamma}]$ is then well-defined
	$\forall (C+\mu I) \in \PC_1(\Hcal)$, so that the following definition is well-justified.
	\begin{definition}
		[\textbf{Regularized geometric Jensen-Shannon divergence between Gaussian measures on Hilbert space}]
		\label{definition:regularized-geometric-JS-Gaussian-Hilbert-space}
		Let
		$\mu_i = \Ncal(m_i, C_i) \in \Gauss(\Hcal)$, $i=0,1$. Let $\gamma>0, \gamma \in \R$ be fixed. Let $0 \leq \alpha \leq 1$ be fixed. Let $C_{\alpha,\gamma}$, $m_{\alpha,\gamma}$ be as defined in Eqs.\eqref{equation:C-alpha-gamma},
		\eqref{equation:m-alpha-gamma}, respectively.
		The regularized geometric Jensen-Shannon divergence between $\mu_0$ and $\mu_1$ is defined to be
		\begin{align}
			&\JS_{G_{\alpha}}^{\gamma}(\mu_0||\mu_1) = 
			\frac{1-\alpha}{2}||C_{\alpha, \gamma}^{-1/2}(m_0 - m_{\alpha,\gamma})||^2 + \frac{\alpha}{2}||C_{\alpha, \gamma}^{-1/2}(m_1 - m_{\alpha,\gamma})||^2 
			\nonumber
			\\
			&\quad +\frac{(1-\alpha)}{2}d^1_{\logdet}[(C_0+\gamma I), C_{\alpha,\gamma}]
			+ \frac{\alpha}{2} d^1_{\logdet}[(C_1+\gamma I), C_{\alpha,\gamma}].
		\end{align}
	\end{definition}
	\begin{proposition}
		\label{proposition:property-regularized-JS-divergence}
		Let $\gamma\in \R, \gamma > 0$ be fixed. Let $0 < \alpha < 1$. Then
		\begin{align}
			&\JS_{G_{\alpha}}^{\gamma}(\mu_0||\mu_1) \geq 0 \;\;\forall \mu_0,\mu_1 \in \Gauss(\Hcal),
			\\
			&\JS_{G_{\alpha}}^{\gamma}(\mu_0||\mu_1) = 0 \equivalent \mu_0 = \mu_1 \equivalent m_0 = m_1, C_0 = C_1.
		\end{align}
	\end{proposition}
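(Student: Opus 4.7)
The plan is to prove both statements by analyzing the four summands in Definition \ref{definition:regularized-geometric-JS-Gaussian-Hilbert-space} separately, exploiting the fact that each is non-negative so the whole sum vanishes only when every term does. First, for non-negativity, the two mean-dependent terms are squared Hilbert-space norms and thus obviously non-negative. For the two Log-Det terms, by Lemma \ref{lemma:C-alpha-gamma-form} we have $C_{\alpha,\gamma} \in \PC_1(\Hcal)$, and both $C_0 + \gamma I$ and $C_1 + \gamma I$ are in $\PC_1(\Hcal)$, so $d^1_{\logdet}$ is well-defined on these pairs. Since $C_{\alpha,\gamma}$ and $C_i + \gamma I$ all carry the same scalar $\gamma$, the Log-Det divergence reduces to the simplified form $d^1_{\logdet}[(C_i+\gamma I), C_{\alpha,\gamma}] = \trX[C_{\alpha,\gamma}^{-1}(C_i+\gamma I) - I] - \log\detX[C_{\alpha,\gamma}^{-1}(C_i+\gamma I)]$, which, as an operator-theoretic Bregman-type divergence generated by $\phi(x) = x - 1 - \log x \geq 0$, is non-negative (this is established in \cite{Minh:LogDet2016}); it vanishes if and only if $C_{\alpha,\gamma}^{-1}(C_i + \gamma I) = I$, i.e.\ $C_i + \gamma I = C_{\alpha,\gamma}$.

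For the equivalence, the forward direction $\mu_0 = \mu_1 \Rightarrow \JS_{G_{\alpha}}^{\gamma}(\mu_0\|\mu_1) = 0$ is immediate: if $m_0 = m_1$ and $C_0 = C_1$, then $C_{\alpha,\gamma} = C_0 + \gamma I$ and $m_{\alpha,\gamma} = m_0 = m_1$, so all four terms collapse to zero. For the reverse direction, assume $\JS_{G_{\alpha}}^{\gamma}(\mu_0\|\mu_1) = 0$. Since $0 < \alpha < 1$ and each of the four summands is non-negative, each must vanish separately. The two Log-Det terms vanishing forces $C_0 + \gamma I = C_{\alpha,\gamma} = C_1 + \gamma I$, hence $C_0 = C_1$. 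Substituting $C_0 = C_1$ into the definitions gives $C_{\alpha,\gamma} = C_0 + \gamma I$ and $m_{\alpha,\gamma} = (1-\alpha)m_0 + \alpha m_1$.

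Now I turn to the mean terms. Since $C_{\alpha,\gamma} = \gamma I + A$ for some $A \in \Sym^{+}(\Hcal) \cap \Tr(\Hcal)$ by Lemma \ref{lemma:C-alpha-gamma-form}, we have $C_{\alpha,\gamma} \geq \gamma I > 0$, so $C_{\alpha,\gamma}$ is invertible with bounded inverse and $C_{\alpha,\gamma}^{-1/2}$ is a bijection on $\Hcal$. Vanishing of $\|C_{\alpha,\gamma}^{-1/2}(m_0 - m_{\alpha,\gamma})\|^2$ therefore yields $m_0 = m_{\alpha,\gamma}$, and similarly $m_1 = m_{\alpha,\gamma}$, so $m_0 = m_1$. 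Combined with $C_0 = C_1$ this gives $\mu_0 = \mu_1$.

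The main technical point to invoke is the characterization of equality in the infinite-dimensional Log-Det divergence, namely that $d^1_{\logdet}[(A+\gamma I), (B+\gamma I)] = 0$ exactly when $A+\gamma I = B+\gamma I$; I expect this to be the one subtle step, to be extracted from \cite{Minh:LogDet2016} (alternatively it can be verified directly via the spectral decomposition of the compact self-adjoint operator $C_{\alpha,\gamma}^{-1}(C_i + \gamma I) - I$, using the strict convexity of $x \mapsto x - 1 - \log x$ at $x=1$). Everything else is bookkeeping with Lemma \ref{lemma:C-alpha-gamma-form} and the explicit forms of $C_{\alpha,\gamma}$ and $m_{\alpha,\gamma}$.
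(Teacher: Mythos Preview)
The paper states Proposition \ref{proposition:property-regularized-JS-divergence} but does not include a proof for it anywhere in Section \ref{section:proofs}; it is evidently regarded as a direct consequence of the properties of $d^1_{\logdet}$ established in \cite{Minh:LogDet2016}. Your argument is correct and is exactly the natural one the authors presumably had in mind: decompose into the four non-negative summands, use Lemma \ref{lemma:C-alpha-gamma-form} to see that $C_{\alpha,\gamma}$ has scalar part $\gamma$ so that the simplified same-$\gamma$ form of $d^1_{\logdet}$ applies, and invoke the divergence property $d^1_{\logdet}[(A+\gamma I),(B+\gamma I)] = 0 \iff A = B$ from \cite{Minh:LogDet2016} together with the invertibility of $C_{\alpha,\gamma}^{-1/2}$ for the mean terms. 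There is nothing further to compare.
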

	The following result shows that
	for two equivalent
	Gaussian measures $\mu_0 \sim \mu_1$ with zero mean,
	we recover the exact geometric $\JS$ divergence as
	$\gamma \approach 0^{+}$.
	\begin{theorem}
		[\textbf{Limiting behavior of the regularized geometric Jensen-Shannon divergence}]
		\label{theorem:limit-regularized-JS-Gaussian}
		Let
		$\mu_i = \Ncal(0, C_i) \in \Gauss(\Hcal)$, $i=0,1$. Let $0 \leq \alpha \leq 1$ be fixed. Assume that $\mu_0 \sim \mu_1$. Then
		\begin{align}
			\lim_{\gamma \approach 0^{+}}\JS_{G_{\alpha}}^{\gamma}(\mu_0||\mu_1) = \JS_{G_{\alpha}}(\mu_0||\mu_1).
		\end{align}
	\end{theorem}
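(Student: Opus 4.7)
Since $m_0 = m_1 = 0$, Eqs.~\eqref{equation:m-alpha-gamma} and \eqref{equation:m-alpha} yield $m_{\alpha,\gamma} = m_\alpha = 0$, so the quadratic mean terms in both $\JS_{G_\alpha}^\gamma(\mu_0||\mu_1)$ and $\JS_{G_\alpha}(\mu_0||\mu_1)$ vanish. Fixing $\mu_* = \mu_0$, the equivalence $\mu_0 \sim \mu_1$ gives $S_0 = 0$ and $C_1 = C_*^{1/2}(I-S)C_*^{1/2}$ with $S = S_1 \in \SymHS(\Hcal)_{<I}$. The theorem thus reduces to proving, for $i = 0, 1$,
\[
\lim_{\gamma \to 0^+} d^1_{\logdet}[(C_i+\gamma I), C_{\alpha,\gamma}] = -\log \dettwo\bigl[(I-S_\alpha)^{-1}(I-S_i)\bigr].
\]

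The plan is to recast each Log-Det divergence as a Hilbert--Carleman determinant and then apply a Sylvester-type cyclic reduction. Direct expansion gives $X_{0,\gamma} := C_{\alpha,\gamma}^{-1}(C_0+\gamma I) - I = \alpha(C_1+\gamma I)^{-1}C_*^{1/2}SC_*^{1/2}$ and $X_{1,\gamma} = -(1-\alpha)(C_0+\gamma I)^{-1}C_*^{1/2}SC_*^{1/2}$. Since $C_*^{1/2} \in \HS(\Hcal)$, both $X_{i,\gamma}$ are trace class as products of HS factors sandwiched by bounded operators; hence $\trX(X_{i,\gamma}) = \trace(X_{i,\gamma})$, $\detX(I+X_{i,\gamma}) = \det(I+X_{i,\gamma})$, and Eq.~\eqref{equation:alpha+1} (with $\mu = \gamma$) simplifies to $d^1_{\logdet}[(C_i+\gamma I), C_{\alpha,\gamma}] = -\log\dettwo(I+X_{i,\gamma})$. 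The cyclic identity $\dettwo(I+AB) = \dettwo(I+BA)$ for HS operators $A, B$, proved by finite-rank approximation and HS-continuity of $\dettwo$, then yields $\dettwo(I+X_{0,\gamma}) = \dettwo(I+\alpha S L_\gamma)$ and $\dettwo(I+X_{1,\gamma}) = \dettwo(I - (1-\alpha) S L_\gamma')$, where $L_\gamma = C_*^{1/2}(C_1+\gamma I)^{-1}C_*^{1/2}$ and $L_\gamma' = C_*^{1/2}(C_0+\gamma I)^{-1}C_*^{1/2}$. Introducing the self-adjoint contraction $K_\gamma = C_*^{1/2}(C_*+\gamma I)^{-1/2}$, the Feldman--H\'ajek-type factorization $C_1+\gamma I = (C_*+\gamma I)^{1/2}(I - K_\gamma S K_\gamma)(C_*+\gamma I)^{1/2}$ gives $L_\gamma = K_\gamma(I - K_\gamma S K_\gamma)^{-1}K_\gamma$ and $L_\gamma' = K_\gamma^2$.

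The main technical step, and the core obstacle, is establishing the HS-norm convergences $SL_\gamma \to S(I-S)^{-1}$ and $SL_\gamma' \to S$ as $\gamma \to 0^+$. The difficulty is that $\|K_\gamma - I\|$ does not tend to zero; the resolution is that the HS factor $S$ absorbs this obstruction via
\[
\|S(K_\gamma - I)\|_{HS}^2 = \sum_k \bigl|\sqrt{\lambda_k/(\lambda_k+\gamma)} - 1\bigr|^2 \|Se_k\|^2 \to 0
\]
by dominated convergence in the eigenbasis $\{e_k\}$ of $C_*$, which also yields $K_\gamma S K_\gamma \to S$ in HS norm. A standard three-term splitting of $SL_\gamma - S(I-S)^{-1}$ combined with norm continuity of inversion then delivers the required HS-convergences. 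By continuity of $\dettwo$ on $I + \HS(\Hcal)$ one obtains $\dettwo(I+X_{0,\gamma}) \to \dettwo(I + \alpha S(I-S)^{-1})$ and $\dettwo(I+X_{1,\gamma}) \to \dettwo(I - (1-\alpha)S)$. A second application of cyclic invariance, together with the algebraic identities $(I-S_\alpha)^{-1}(I-S_0) = (1-\alpha)I + \alpha(I-S)^{-1} = I + \alpha(I-S)^{-1}S$ and $(I-S_\alpha)^{-1}(I-S_1) = [(1-\alpha)I + \alpha(I-S)^{-1}](I-S) = I - (1-\alpha)S$, identifies these limits as $\dettwo\bigl[(I-S_\alpha)^{-1}(I-S_i)\bigr]$, completing the proof.
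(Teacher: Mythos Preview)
Your proof is correct and follows essentially the same route as the paper: reduce $d^1_{\logdet}[(C_i+\gamma I),C_{\alpha,\gamma}]$ to $-\log\dettwo(I+X_{i,\gamma})$, establish Hilbert--Schmidt convergence of the perturbation as $\gamma\to0^+$, and invoke HS-continuity of $\dettwo$. The only differences are packaging: the paper outsources the HS convergence to the cited Proposition~\ref{proposition:limit-product-A-plus-gamma-B} (your $K_\gamma S K_\gamma\to S$ is exactly that proposition with $A=C_*$, $B=S$) and treats the two cases $i=0,1$ by swapping the roles of $C_0,C_1$, whereas you fix $\mu_*=\mu_0$ throughout and handle the asymmetric case $i=0$ via the factorization $C_1+\gamma I=(C_*+\gamma I)^{1/2}(I-K_\gamma SK_\gamma)(C_*+\gamma I)^{1/2}$ together with cyclic invariance of $\dettwo$---a slightly more self-contained and uniform argument, at the cost of one extra algebraic identity.
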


\section{Proofs of Main Results}
\label{section:proofs}

We first note the following.
By Lemma 3 in \cite{Minh:LogDet2016}, if $S \in \SymTr(\Hcal)_{< I}$, then $\log(I-S) \in \Sym(\Hcal)\cap \Tr(\Hcal)$.
Let $\{\alpha_k\}_{k=1}^{\infty}$ be the  eigenvalues of $S$, then
\begin{align*}
	\left|\sum_{k=1}^{\infty}\log(1-\alpha_k)\right| = |\trace(\log(I-S))| \leq ||\log(I-S)||_{\tr} < \infty. 
\end{align*}
It follows that for $S \in \SymTr(\Hcal)_{< I}$,
\begin{align*}
	\det(I-S) &= \prod_{k=1}^{\infty}(1-\alpha_k) = \exp(\trace(\log(I-S))) > 0,
	\\
	|\log\det(I-S)| &= \left|\sum_{k=1}^{\infty}\log(1-\alpha_k)\right| = |\trace(\log(I-S))|< \infty.
\end{align*}
Thus all expressions of the form $\det(I-S)$ and $\log\det(I-S)$, for $S \in \SymTr(\Hcal)_{<I}$, are well-defined and finite. 
This also implies that for $S \in \SymHS(\Hcal)_{< I}$, we have $\dettwo(I-S) = \det[(I-S)\exp(S)] > 0$, so that $\log\dettwo(I-S)$ is well-defined and finite.
Furthermore, we have the following
\begin{proposition}
	[\cite{Minh:2022Kullback}, Theorem 12]
	\label{proposition:logdet2-I-S-finite}
	Let $S \in \SymHS(\Hcal)_{< I}$. Then
	\begin{align}
		|\log\dettwo(I-S)| = -\log\dettwo(I-S) \leq ||(I-S)^{-1}||\;||S||^2_{\HS}.
	\end{align}
\end{proposition}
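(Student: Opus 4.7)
The plan is to diagonalize $S$ via the spectral theorem for compact self-adjoint operators and reduce the proposition to two scalar inequalities summed over the eigenvalues. Since $S\in\Sym(\Hcal)\cap\HS(\Hcal)$, fix an orthonormal eigenbasis $\{\phi_k\}_{k=1}^\infty$ with real eigenvalues $\{\lambda_k\}_{k=1}^\infty$ satisfying $\sum_k\lambda_k^2=\|S\|^2_{\HS}<\infty$. The hypothesis $I-S>0$ forces $\lambda_k<1$ for every $k$, so $1-\lambda_k>0$ throughout. Because $(I-S)\exp(S)-I$ is trace class for $S\in\HS(\Hcal)$ (the Hilbert--Carleman determinant is designed precisely so that the non--trace-class first-order term cancels) and is simultaneously diagonalized in the eigenbasis of $S$, the definition $\dettwo(I-S)=\det[(I-S)\exp(S)]$ together with the product formula for the Fredholm determinant gives
\begin{align*}
    \log\dettwo(I-S) \;=\; \sum_{k=1}^\infty \bigl[\log(1-\lambda_k)+\lambda_k\bigr],
\end{align*}
with the series converging absolutely since each term is $O(\lambda_k^2)$.

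For the equality $|\log\dettwo(I-S)|=-\log\dettwo(I-S)$, I would invoke the elementary inequality $\log(1-x)+x\leq 0$ valid for every $x<1$ (immediate from concavity of $\log$, with equality iff $x=0$). Termwise application makes the series above non-positive, so $\log\dettwo(I-S)\leq 0$ and its absolute value equals its negative.

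For the quantitative bound, I would prove the sharper scalar estimate
\begin{align*}
    -\log(1-x)-x \;\leq\; \frac{x^2}{1-x}, \qquad x<1,
\end{align*}
by setting $g(x)=\tfrac{x^2}{1-x}+\log(1-x)+x$, computing $g'(x)=x/(1-x)^2$ by direct differentiation, and noting that $g$ attains its unique minimum $g(0)=0$. Applying this termwise to the eigenvalues of $S$ yields
\begin{align*}
    -\log\dettwo(I-S) \;\leq\; \sum_{k=1}^\infty \frac{\lambda_k^2}{1-\lambda_k} \;\leq\; \Bigl(\sup_k \tfrac{1}{1-\lambda_k}\Bigr)\sum_{k=1}^\infty \lambda_k^2 \;=\; \|(I-S)^{-1}\|\,\|S\|^2_{\HS},
\end{align*}
where the final equality uses that the self-adjoint operator $(I-S)^{-1}$ has positive eigenvalues $1/(1-\lambda_k)$, so that its operator norm equals their supremum.

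The only mildly subtle ingredient is the passage from the operator-theoretic definition of $\dettwo$ to the scalar product over eigenvalues. This is a standard property of the Carleman determinant on $\HS(\Hcal)$ (see \cite{Simon:1977}) and is already implicit in the excerpt when deducing $\dettwo(I-S)>0$ for $S\in\SymHS(\Hcal)_{<I}$. Once that reduction is made, the whole proposition becomes a one-variable calculus exercise combined with the spectral identity for $\|(I-S)^{-1}\|$.
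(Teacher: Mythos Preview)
Your proof is correct. The paper does not actually prove this proposition; it is quoted verbatim as Theorem~12 from \cite{Minh:2022Kullback} and used as a black box thereafter, so there is no in-paper argument to compare against. Your approach---spectral reduction to the scalar inequalities $\log(1-x)+x\le 0$ and $-\log(1-x)-x\le x^2/(1-x)$ for $x<1$, followed by the identification $\|(I-S)^{-1}\|=\sup_k(1-\lambda_k)^{-1}$---is the standard one and is exactly what the cited source does.
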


\subsection{Radon-Nikodym density between two equivalent Gaussian measures}
\label{section:Radon-Nikodym-density-equivalent-Gaussian-Hilbert-space}

In this section, we review the explicit expressions for the Radon-Nikodym density between two equivalent Gaussian measures on 
$\Hcal$. 
These are employed to derive the explicit expression for the geometric interpolation of equivalent Gaussian measures in Theorem \ref{theorem:geometric-interpolation-equivalent-Gaussian-Hilbert-space}.
The following material is based on \cite{Bogachev:Gaussian}, \cite{DaPrato:PDEHilbert}, and 
the exposition in \cite{Minh:2020regularizedDiv} and \cite{Minh2024:FisherRaoGaussian}.

For the expression of the Radon-Nikodym density between two equivalent Gaussian measures on 
$\Hcal$, 
we utilize the concept of {\it white noise mapping}, see e.g. \cite{DaPrato:2006,DaPrato:PDEHilbert}.
For $\mu = \Ncal(m, Q)$, $\ker(Q) = \{0\}$, we  define
$\Lcal^2(\Hcal, \mu) = \Lcal^2(\Hcal, \Bsc(\Hcal),\mu) = \Lcal^2(\Hcal, \Bsc(\Hcal), \Ncal(m,Q))$.
Consider the following mapping
\begin{align}
	&W:Q^{1/2}(\Hcal) \subset \Hcal \mapto \Lcal^2(\Hcal,\mu), \;\; z  \in Q^{1/2}(\Hcal) \mapto W_z \in \Lcal^2(\Hcal, \mu),
	\\
	&W_z(x) = \la x -m, Q^{-1/2}z\ra,  \;\;\; z \in Q^{1/2}(\Hcal), x \in \Hcal.
\end{align}
For any pair $z_1, z_2 \in Q^{1/2}(\Hcal)$, it follows from the definition of covariance operator
that $\la W_{z_1}, W_{z_2}\ra_{\Lcal^2(\Hcal,\mu)} = \la z_1, z_2\ra_{\Hcal}$ 
so that the map $W:Q^{1/2}(\Hcal) \mapto \Lcal^2(\Hcal, \mu)$ is an isometry, that is
$||W_z||_{\Lcal^2(\Hcal,\mu)} = ||z||_{\Hcal}$, $z \in Q^{1/2}(\Hcal)$.
Since $\ker(Q) = \{0\}$, the subspace $Q^{1/2}(\Hcal)$ is dense in $\Hcal$ and the map $W$ can be uniquely extended to all of $\Hcal$, as follows.
For any $z \in \Hcal$, let $\{z_n\}_{n\in \Nbb}$ be a sequence in $Q^{1/2}(\Hcal)$ with $\lim_{n \approach \infty}||z_n -z||_{\Hcal} = 0$.
Then $\{z_n\}_{n \in \Nbb}$ is a Cauchy sequence in $\Hcal$, so that by isometry, $\{W_{z_n}\}_{n\in \Nbb}$ is also
a Cauchy sequence in $\Lcal^2(\Hcal, \mu)$, thus converging to a unique element in $\Lcal^2(\Hcal, \mu)$.
Thus
we can define
\begin{align}
	W: \Hcal \mapto \Lcal^2(\Hcal, \mu),  \;\;\; z \in \Hcal \mapto \Lcal^2(\Hcal, \mu)
\end{align}
by the following unique limit in $\Lcal^2(\Hcal, \mu)$
\begin{align}
	W_z(x) = \lim_{n \approach \infty}W_{z_n}(x) = \lim_{n \approach \infty}\la x-m, Q^{-1/2}z_n\ra.
\end{align}
The map $W: \Hcal \mapto \Lcal^2(\Hcal, \mu)$ is called the {\it white noise mapping}
associated with the measure $\mu = \Ncal(m,Q)$.

Let $\mu = \Ncal(m_1, Q)$, $\nu = \Ncal(m_2,R)$ be equivalent, with $R = Q^{1/2}(I-S)Q^{1/2}$, $S \in \SymHS(\Hcal)_{<I}$.
Let $\{\alpha_k\}_{k \in \Nbb}$ be the eigenvalues of $S$, with corresponding orthonormal eigenvectors $\{\phi_k\}_{k \in \Nbb}$, which form an orthonormal basis in $\Hcal$.
The following result expresses the Radon-Nikodym density $\frac{d\nu}{d\mu}$ in terms of the $\alpha_k$'s and
$\phi_k$'s. Here the white noise mapping is $W: \Hcal \mapto \Lcal^2(\Hcal, \mu) = \Lcal^2(\Hcal, \Ncal(m_1,Q))$,
with $\{W_{\phi_k}\}_{k=1}^{\infty}$ forming an orthonormal sequence in $\Lcal^2(\Hcal,\mu)$.


\begin{theorem}
	[\cite{Minh:2020regularizedDiv}, Theorem 11 and Corollary 2]
	\label{theorem:radon-nikodym-infinite}
	Let $\mu = \Ncal(m_1, Q)$, $\nu = \Ncal(m_2,R)$, with $m_2 - m_1 \in \range(Q^{1/2})$, $R = Q^{1/2}(I-S)Q^{1/2}$, $S \in \SymHS(\Hcal)_{<I}$.
	The Radon-Nikodym density $\frac{d\nu}{d\mu}$
	is given by
	\begin{align}
		\label{equation:RN-infinite}
		\frac{d\nu}{d\mu}(x) = \exp\left[-\frac{1}{2}\sum_{k=1}^{\infty}\Phi_k(x)\right]\exp\left[-\frac{1}{2}||(I-S)^{-1/2}Q^{-1/2}(m_2 - m_1)||^2\right],
	\end{align}
	where for each $k \in \Nbb$
	\begin{align}
		\label{equation:Phik}
		\Phi_k = \frac{\alpha_k}{1-\alpha_k}W^2_{\phi_k} - \frac{2}{1-\alpha_k}\la Q^{-1/2}(m_2-m_1), \phi_k\ra W_{\phi_k}+ \log(1-\alpha_k).
	\end{align}
	The series $\sum_{k=1}^{\infty}\Phi_k$ converges in 
	$\Lcal^2(\Hcal,\mu)$ and
	$s(x) = \exp\left[-\frac{1}{2}\sum_{k=1}^{\infty}\Phi_k(x)\right] \in \Lcal^1(\Hcal, \mu)$.
\end{theorem}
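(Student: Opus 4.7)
The plan is to reduce the statement to the zero-mean case via a Cameron--Martin translation, diagonalize the perturbation operator $S$, compute the Radon-Nikodym derivative explicitly on each finite-dimensional block spanned by a finite set of eigenvectors of $S$, and then pass to the infinite-dimensional limit in $\Lcal^2(\Hcal,\mu)$.

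First I would split the problem by writing $\frac{d\nu}{d\mu} = \frac{d\Ncal(m_2,R)}{d\Ncal(m_1,R)}\cdot\frac{d\Ncal(m_1,R)}{d\Ncal(m_1,Q)}$. The first factor is the Cameron--Martin density for a pure translation. Since $m_2 - m_1 \in \range(Q^{1/2}) = \range(R^{1/2})$ (the latter identity is a consequence of $R = Q^{1/2}(I-S)Q^{1/2}$ with $I-S > 0$ boundedly invertible on $\range(Q^{1/2})$), this factor equals $\exp[W_{R^{-1/2}(m_2-m_1)} - \tfrac{1}{2}\|R^{-1/2}(m_2-m_1)\|^2]$ where the white noise is taken with respect to $\nu$. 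Converting to the white noise $W$ associated with $\mu = \Ncal(m_1,Q)$ and expressing $R^{-1/2}(m_2-m_1) = (I-S)^{-1/2}Q^{-1/2}(m_2-m_1)$ produces the second exponential in Eq.\eqref{equation:RN-infinite} together with the linear term in $W_{\phi_k}$ appearing in Eq.\eqref{equation:Phik}.

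Next, for the pure covariance change $\frac{d\Ncal(m_1,R)}{d\Ncal(m_1,Q)}$, I would diagonalize $S$ as $S = \sum_k \alpha_k \phi_k \otimes \phi_k$ with $\alpha_k \in (-\infty, 1)$ and $\{\alpha_k\} \in \ell^2$ by $S \in \HS(\Hcal)$. Let $P_N$ denote the orthogonal projection onto $\myspan\{\phi_1,\dots,\phi_N\}$. On the finite-dimensional block determined by $P_N$, both measures are genuine Gaussians with densities against Lebesgue measure, and a direct finite-dimensional computation in the coordinates $W_{\phi_k}$ (which are i.i.d.\ standard normals under $\mu$) yields
\begin{equation*}
	\frac{d\nu_N}{d\mu_N}(x) = \exp\Bigl[-\tfrac{1}{2}\sum_{k=1}^N\bigl(\tfrac{\alpha_k}{1-\alpha_k}W_{\phi_k}^2 + \log(1-\alpha_k)\bigr)\Bigr],
\end{equation*}
which matches the quadratic and constant contributions of $\Phi_k$.

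The main obstacle is the passage $N \to \infty$, namely showing that $\sum_{k=1}^{\infty}\Phi_k$ converges in $\Lcal^2(\Hcal, \mu)$ and that the resulting exponential is the genuine density $\frac{d\nu}{d\mu}$. Orthonormality of $\{W_{\phi_k}\}$ in $\Lcal^2(\Hcal,\mu)$ and the standard fourth-moment identity for Gaussians give $\|W_{\phi_k}^2 - 1\|^2_{\Lcal^2(\Hcal,\mu)} = 2$, so the quadratic part $\sum_k \tfrac{\alpha_k}{1-\alpha_k}(W_{\phi_k}^2-1)$ converges in $\Lcal^2(\Hcal,\mu)$ provided $\sum_k \bigl(\tfrac{\alpha_k}{1-\alpha_k}\bigr)^2 < \infty$, which holds because $(I-S)^{-1}$ is bounded and $\{\alpha_k\}\in\ell^2$; the recentering constant $\sum_k \tfrac{\alpha_k}{1-\alpha_k}$ absorbs into $\sum_k \log(1-\alpha_k)$ since $|\log(1-\alpha_k) + \alpha_k/(1-\alpha_k)| = O(\alpha_k^2)$. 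The linear part $\sum_k \tfrac{2}{1-\alpha_k}\la Q^{-1/2}(m_2-m_1),\phi_k\ra W_{\phi_k}$ converges in $\Lcal^2(\Hcal,\mu)$ because its squared $\Lcal^2$ norm equals $4\|(I-S)^{-1}Q^{-1/2}(m_2-m_1)\|^2 < \infty$, using $m_2-m_1 \in \range(Q^{1/2})$ together with the boundedness of $(I-S)^{-1}$. Finally, $s = \exp[-\tfrac{1}{2}\sum_k \Phi_k] \in \Lcal^1(\Hcal,\mu)$ follows from identifying $s$ as the density of $\nu$, equivalently from $\int s\,d\mu = 1$, which can be verified by taking the $\Lcal^1$-limit of the finite-dimensional normalizing integrals together with uniform integrability supplied by $\nu \ll \mu$.
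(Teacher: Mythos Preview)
The paper does not prove this theorem; it is quoted verbatim as Theorem~11 and Corollary~2 of \cite{Minh:2020regularizedDiv}, with the zero-mean case attributed to \cite{Bogachev:Gaussian}, Corollary~6.4.11. So there is no in-paper proof to compare against. Your outline is the standard route to this result and is essentially correct: factor into a Cameron--Martin translation and a pure covariance perturbation, diagonalize $S$ so that the $W_{\phi_k}$ are i.i.d.\ $\Ncal(0,1)$ under $\mu$ and independent $\Ncal(0,1-\alpha_k)$ under $\Ncal(m_1,R)$, read off the finite product density, and pass to the limit using $\{\alpha_k\}\in\ell^2$.

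Two small points. First, in your Cameron--Martin step the white noise should be the one for $\Ncal(m_1,R)$, not for $\nu=\Ncal(m_2,R)$; this is a slip of the pen since you immediately convert to the $W$ attached to $\mu$, and the computation you describe (giving the $\tfrac{2}{1-\alpha_k}\la Q^{-1/2}(m_2-m_1),\phi_k\ra W_{\phi_k}$ term and the norm $\|(I-S)^{-1/2}Q^{-1/2}(m_2-m_1)\|^2$) is correct. Second, your justification that $s\in\Lcal^1(\Hcal,\mu)$ is a little circular as written (``follows from identifying $s$ as the density of $\nu$''). The clean way to close the loop is a martingale argument: the partial products $s_N=\exp\bigl[-\tfrac12\sum_{k\le N}\Phi_k\bigr]$ form a nonnegative martingale with respect to $\sigma(W_{\phi_1},\dots,W_{\phi_N})$ with $\mathbb{E}_\mu s_N=1$; your $\Lcal^2$ convergence of $\sum_k\Phi_k$ gives $s_N\to s$ in $\mu$-probability, and since each $s_N$ is the conditional density $\mathbb{E}_\mu\bigl[\tfrac{d\nu}{d\mu}\mid W_{\phi_1},\dots,W_{\phi_N}\bigr]$, uniform integrability (equivalently $\nu\ll\mu$) upgrades this to $\Lcal^1$ convergence and identifies $s=\tfrac{d\nu}{d\mu}$.
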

For the case $m_1 = m_2 = 0$, this is Corollary 6.4.11 in \cite{Bogachev:Gaussian}.
In particular, for the case $S \in \Sym(\Hcal) \cap \Tr(\Hcal)$, $I -S > 0$, we have
\begin{align}
	\label{equation:RN-traceclass}
	\frac{d\nu}{d\mu}(x) &= [\det(I-S)]^{-1/2}
	\\
	& \times \exp\left\{-\frac{1}{2}\la Q^{-1/2}(x-m_1), S(I-S)^{-1}Q^{-1/2}(x-m_1)\ra \right\}
	\nonumber
	\\
	&\times \exp(\la Q^{-1/2}(x-m_1), (I-S)^{-1}Q^{-1/2}(m_2 - m_1)\ra)
	\nonumber
	\\
	&\times \exp\left[-\frac{1}{2}||(I-S)^{-1/2}Q^{-1/2}(m_2 - m_1)||^2\right].
	\nonumber
\end{align}
For the case $Q=R \equivalent S=0$, i.e. two measures have the same covariance operator), we have
\begin{align}
\label{equation:RN-same-covariance}
\frac{d\nu}{d\mu}(x) &= \exp(\la Q^{-1/2}(x-m_1), Q^{-1/2}(m_2 - m_1)\ra)
\nonumber
\\
&\quad \times \exp\left[-\frac{1}{2}||Q^{-1/2}(m_2 - m_1)||^2\right].
\end{align}
Let $P_N = \sum_{k=1}^Ne_k \otimes e_k$, $N \in \Nbb$, be the orthogonal projection onto the $N$-dimensional subspace of $\Hcal$ spanned
by $\{e_k\}_{k=1}^N$, where $\{e_k\}_{k=1}^{\infty}$ are the orthonormal eigenvectors of $Q$.
In the above expression,
\begin{align}
	&\la Q^{-1/2}(x-m_1), S(I-S)^{-1}Q^{-1/2}(x-m_1)\ra 
	\nonumber
	\\
	& \doteq \lim_{N \approach \infty} \la Q^{-1/2}P_N(x-m_1), S(I-S)^{-1}Q^{-1/2}P_N(x-m_1)\ra 
	\\
	&
	\la Q^{-1/2}(x-m_1), (I-S)^{-1}Q^{-1/2}(m_2 - m_1)\ra 
	\nonumber
	\\
	&\doteq \lim_{N \approach \infty} \la Q^{-1/2}P_N(x-m_1), (I-S)^{-1}Q^{-1/2}(m_2 - m_1)\ra,
\end{align}
with the limits being in the 
$\Lcal^2(\Hcal, \mu)$ sense.
For the case $m_1=m_2 = 0$, we obtain the expression given in Proposition 1.3.11 in \cite{DaPrato:PDEHilbert}
\begin{align}
	\label{equation:RN-m1m2-0}
	\frac{d\nu}{d\mu}(x) = [\det(I-S)]^{-1/2}\exp\left\{-\frac{1}{2}\la Q^{-1/2}x, S(I-S)^{-1}Q^{-1/2}x\ra \right\}.
\end{align}
It follows that for $S \in \SymHS(\Hcal)_{<I}$,
\begin{align}
	\log\left\{{\frac{d\nu}{d\mu}(x)}\right\} = -\frac{1}{2}\sum_{k=1}^{\infty}\Phi_k(x)-\frac{1}{2}||(I-S)^{-1/2}Q^{-1/2}(m_2 - m_1)||^2.
\end{align}
In the case $m_1 = m_2 = 0$,
\begin{align}
	\label{equation:log-Radon-Nikodym-S-HS-zero-mean}
	\log\left\{{\frac{d\nu}{d\mu}(x)}\right\} = -\frac{1}{2}\sum_{k=1}^{\infty}\Phi_k(x) = -\frac{1}{2}\sum_{k=1}^{\infty}\left[\frac{\alpha_k}{1-\alpha_k}W^2_{\phi_k}(x) + \log(1-\alpha_k)\right].
\end{align}
In particular, for $S \in \SymTr(\Hcal)_{<I}$, with $m_1 = m_2 = 0$,
\begin{align}
	\label{equation:log-Radon-Nikodym-S-trace-class-zero-mean}
	\log\left\{{\frac{d\nu}{d\mu}(x)}\right\} = - \frac{1}{2}\log\det(I-S) - \frac{1}{2}\la Q^{-1/2}x, S(I-S)^{-1}Q^{-1/2}x\ra.
\end{align}

We first prove the following result, which is the generalization of Proposition 12 
in \cite{Minh2024:FisherRaoGaussian}, which treats zero-mean Gaussian measures on $\Hcal$, to the general Gaussian setting
involving both means and covariance operators.

\begin{proposition}
	\label{proposition:inner-logRN-mu-star}
	Let $S_{\nu} \in \SymHS(\Hcal)_{<I}$ be fixed but arbitrary. 
	Let $\nu = \Ncal(m_{\nu}, C_{\nu}) \in \Gauss(\Hcal, \mu_{*})$, where $m_{\nu} \in \Hcal$, $C_{\nu} = C_{*}^{1/2}(I-S_{\nu})C_{*}^{1/2}$.
	Let $S, S_1,S_2 \in \SymHS(\Hcal)_{< I}$. Let
	$\mu = \Ncal(m,C)$, $\mu_i = \Ncal(m_i, C_i) \in \Gauss(\Hcal, \mu_{*})$, $C= C_{*}^{1/2}(I-S)C_{*}^{1/2}$, $C_i = C_{*}^{1/2}(I-S_i)C_{*}^{1/2}$, $i=1,2$.
	Then, with $u_{\nu} = C_{*}^{-1/2}(m_{\nu} - m_{*})$, $u_i = C_{*}^{-1/2}(m_i - m_{*})$, $i=1,2$,
	\begin{align}
		&\left\la \log\left\{{\frac{d\mu_1}{d\mu_{*}}(x)}\right\}, \log\left\{{\frac{d\mu_2}{d\mu_{*}}(x)}\right\}\right\ra_{\Lcal^2(\Hcal,\nu)} 
		\\
		&
		= \frac{1}{2}\trace[(I-S_{\nu})S_1(I-S_1)^{-1}(I-S_{\nu})S_2(I-S_2)^{-1}]
		\nonumber
		\\
		& \quad +\left\la (I-S_{\nu})^{1/2}(I-S_1)^{-1}u_1 - (I-S_{\nu})^{1/2}S_1(I-S_1)^{-1}u_{\nu}, \right.
		\nonumber
		\\
		&\quad \quad \quad \left.(I-S_{\nu})^{1/2}(I-S_2)^{-1}u_2 - (I-S_{\nu})^{1/2}S_2(I-S_2)^{-1}u_{\nu}\right\ra
		\nonumber
		\\
		& \quad + \frac{1}{4}\left[\la u_{\nu}, S_1(I-S_1)^{-1}u_{\nu}\ra-2\la (I-S_1)^{-1}u_1, u_{\nu}\ra \right.
		\nonumber
		\\
		&\quad \quad \quad \left. -  \trace(S_{\nu}S_1(I-S_1)^{-1})-	\log\dettwo[(I-S_1)^{-1} \right]
		\nonumber
		\\
		& \quad \quad \times \left[\la u_{\nu}, S_2(I-S_2)^{-1}u_{\nu}\ra-2\la (I-S_2)^{-1}u_2, u_{\nu}\ra \right.
		\nonumber
		\\
		&\quad \quad \quad \left. - \trace(S_{\nu}S_2(I-S_2)^{-1})-	\log\dettwo[(I-S_2)^{-1}\right].
		\nonumber
	\end{align} 
	In particular, for $\mu_1 = \mu_2 = \mu = \Ncal(m,C)$, $m_1 = m_2 = m$, $u_1 = u_2 = u = C_{*}^{-1/2}(m-m_{*})$, $S_1 = S_2 = S$,
	\begin{align}
		&\left\|\log\left\{\frac{d\mu}{d\mu_{*}}(x)\right\}\right\|^2_{\Lcal^2(\Hcal,\nu)} 
		\\
			&
		= \frac{1}{2}||(I-S_{\nu})^{1/2}S(I-S)^{-1}(I-S_{\nu})^{1/2}||^2_{\HS}
		\nonumber
		\\
		& \quad +||(I-S_{\nu})^{1/2}(I-S)^{-1}u - (I-S_{\nu})^{1/2}S(I-S)^{-1}u_{\nu}||^2
		\nonumber
		\\
		& \quad + \frac{1}{4}\left[\la u_{\nu}, S(I-S)^{-1}u_{\nu}\ra-2\la (I-S)^{-1}u, u_{\nu}\ra \right.
		\nonumber
		\\
		&\quad \quad \quad \left. -  \trace(S_{\nu}S(I-S)^{-1})-	\log\dettwo[(I-S)^{-1} \right]^2.
		\nonumber
	\end{align}
	Consequently,
	\begin{align}
		&\left\| \log\left\{{\frac{d\mu_1}{d\mu_{*}}(x)}\right\}- \log\left\{{\frac{d\mu_2}{d\mu_{*}}(x)}\right\}\right\|^2_{\Lcal^2(\Hcal,\nu)}
		\\
		&= \frac{1}{2}||(I-S_{\nu})^{1/2}[S_1(I-S_1)^{-1}-S_2(I-S_2)^{-1}](I-S_{\nu})^{1/2}||^2_{\HS}
		\nonumber
		\\
		& \quad + ||(I-S_{\nu})^{1/2}[(I-S_1)^{-1}u_1 - (I-S_2)^{-1}u_2] 
		\nonumber
		\\
		&\quad \quad -(I-S_{\nu})^{1/2}[S_1(I-S_1)^{-1}-S_2(I-S_2)^{-1}]u_{\nu}||^2
		\nonumber
		\\
		& \quad + \frac{1}{4}\left(\log\dettwo(I-S_1)^{-1}- \log\dettwo(I-S_2)^{-1} \right.
		\nonumber
		\\
		&\quad \quad+ \trace[S_{\nu}[S_1(I-S_1)^{-1}-S_2(I-S_2)^{-1}]]
		\nonumber
		\\
		&\quad \quad - \la u_{\nu}, [S_1(I-S_1)-S_2(I-S_2)^{-1}]u_{\nu}\ra 
		\nonumber
		\\
		& \quad \quad \left.+ 2 \la (I-S_1)^{-1}u_1 - (I-S_2)^{-1}u_2, u_{\nu}\ra\right)^2.
		\nonumber
	\end{align}
	Furthermore,
	\begin{align}
		\label{equation:upperbound-logRN-L2}
		&\left\| \log\left\{{\frac{d\mu_1}{d\mu_{*}}(x)}\right\}- \log\left\{{\frac{d\mu_2}{d\mu_{*}}(x)}\right\}\right\|^2_{\Lcal^2(\Hcal,\nu)}
		\\
		&\leq \left(\frac{1}{2}||I-S_{\nu}||^2 + 2||I-S_{\nu}||\;||u_{\nu}||^2 + [||S_1||_{\HS} + ||S_2||_{\HS} + ||S_1S_2||_{\HS}]^2\right. 
		\nonumber
		\\
		&\quad \quad \left.+ ||S_{\nu}||_{\HS}^2 + ||u_{\nu}||^4 + 8||u_1||^2||u_{\nu}||^2 + 4||u_1||^2\right)
		\nonumber
		\\
		& \quad \quad \times ||(I-S_1)^{-1}||^2||(I-S_2)^{-1}||^2||S_1-S_2||^2_{\HS}
		\nonumber
		\\
		& \quad +[4||I-S_{\nu}||+8||u_{\nu}||^2] ||(I-S_2)^{-1}||^2||u_1 - u_2||^2. 
		\nonumber
	\end{align}
\end{proposition}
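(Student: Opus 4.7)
The strategy is to first establish the main inner-product identity in the restricted trace-class setting $S_1, S_2 \in \SymTr(\Hcal)_{<I}$, where the Radon--Nikodym densities admit the closed-form expression \eqref{equation:RN-traceclass}, and then extend to $S_1, S_2 \in \SymHS(\Hcal)_{<I}$ by finite-rank truncation. The remaining three parts (norm, norm of difference, upper bound) follow by specialization, polarization, and resolvent estimates.

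In the trace-class setting, set $u_i = C_*^{-1/2}(m_i - m_*)$, $A_i = S_i(I-S_i)^{-1} \in \Sym(\Hcal)\cap\Tr(\Hcal)$, $v_i = (I-S_i)^{-1}u_i$, and $c_i = -\tfrac{1}{2}\log\det(I-S_i) - \tfrac{1}{2}\|(I-S_i)^{-1/2}u_i\|^2$. Then \eqref{equation:RN-traceclass} writes the log-density as the sum of the constant $c_i$, a quadratic form in $W = C_*^{-1/2}(x - m_*)$ with operator $-\tfrac{1}{2}A_i$, and a linear form $\langle W, v_i\rangle$, interpreted in the $\Lcal^2$ sense via the white-noise map. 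Under $\nu$, $W$ is formally Gaussian with mean $u_\nu$ and covariance $I - S_\nu$, in the sense that
\begin{align*}
\bE_\nu[W_z] = \langle u_\nu, z\rangle, \qquad \cov_\nu(W_{z_1}, W_{z_2}) = \langle z_1, (I - S_\nu) z_2\rangle
\end{align*}
for all $z, z_1, z_2 \in \Hcal$. The inner product $\bE_\nu[\log(d\mu_1/d\mu_*) \log(d\mu_2/d\mu_*)]$ then expands into nine cross terms (constant--constant, constant--quadratic, constant--linear, quadratic--quadratic, etc.), each of which is evaluated using Isserlis' (Wick's) formulas for second and fourth moments of shifted Gaussian variables.

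The main obstacle is the algebraic reorganization of these nine contributions into the three summands of the stated RHS. The Wick cross term from the quadratic--quadratic expectation produces, by cyclicity of trace, the leading $\tfrac{1}{2}\tr[(I-S_\nu) S_1(I-S_1)^{-1}(I-S_\nu) S_2(I-S_2)^{-1}]$; the quadratic--linear and linear--linear cross terms combine with the $\langle u_\nu, A_1(I-S_\nu)A_2 u_\nu\rangle$ piece of the quadratic--quadratic cross into the bilinear $\langle (I-S_\nu)^{1/2}(v_1 - A_1 u_\nu), (I-S_\nu)^{1/2}(v_2 - A_2 u_\nu)\rangle$; and the separable mean pieces regroup, via the identities $\log\det(I-S_i) = \log\dettwo(I-S_i) - \tr(S_i)$ and $\log\dettwo((I-S_i)^{-1}) = -\log\dettwo(I-S_i) - \tr[S_i^2(I-S_i)^{-1}]$, into the product of the two scalar brackets. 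Organizing the computation by order in $u_\nu$ (zeroth, first, and second order) and tracking which terms involve $\log\dettwo$, which involve $\tr$, and which involve explicit inner products is the most laborious part of the proof.

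The norm-squared formula is the diagonal specialization $\mu_1 = \mu_2$, and the norm-squared-of-difference formula is obtained by polarization $\|X - Y\|^2 = \|X\|^2 - 2\langle X, Y\rangle + \|Y\|^2$. The upper bound \eqref{equation:upperbound-logRN-L2} follows from the norm-of-difference formula via the resolvent identities
\begin{align*}
S_1(I-S_1)^{-1} - S_2(I-S_2)^{-1} &= (I-S_1)^{-1}(S_1 - S_2)(I-S_2)^{-1}, \\
(I-S_1)^{-1}u_1 - (I-S_2)^{-1}u_2 &= (I-S_1)^{-1}(S_1 - S_2)(I-S_2)^{-1}u_1 + (I-S_2)^{-1}(u_1 - u_2),
\end{align*}
the submultiplicative inequality $\|AB\|_{\HS} \leq \|A\|\,\|B\|_{\HS}$, and a Lipschitz-in-HS-norm bound for $\log\dettwo$ (a consequence of Proposition \ref{proposition:logdet2-I-S-finite} combined with a mean-value argument). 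Finally, the passage from $\SymTr(\Hcal)_{<I}$ to $\SymHS(\Hcal)_{<I}$ uses $S_i^{(N)} = P_N S_i P_N$ approximating $S_i$ in HS norm: the trace-class version of the upper bound just derived forces $\log(d\mu_i^{(N)}/d\mu_*)$ to be Cauchy in $\Lcal^2(\Hcal, \nu)$, while the RHS of each identity is manifestly continuous in HS norm by the same resolvent estimates, so both sides extend continuously.
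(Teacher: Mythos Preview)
Your plan is sound and will succeed, but it is organized differently from the paper's argument in one structural respect worth noting. You propose to establish the inner-product identity first for $S_1,S_2\in\SymTr(\Hcal)_{<I}$ using the closed-form density \eqref{equation:RN-traceclass}, treat $\log(d\mu_i/d\mu_*)$ as ``constant $+$ linear $+$ quadratic'' in the white-noise variable, apply Isserlis/Wick directly, and then pass to $\SymHS(\Hcal)_{<I}$ by approximation. The paper instead works \emph{directly} in the HS case from the outset: it uses the eigen\-expansion form \eqref{equation:RN-infinite}--\eqref{equation:Phik} of $\log(d\mu_i/d\mu_*)$, computes the individual integrals $J_{kj}=\int_\Hcal \Phi^{(1)}_k\Phi^{(2)}_j\,d\nu$ using the explicit moment formulas for $W_a,W_aW_b,W_a^2W_b,W_a^2W_b^2$ under $\nu$ (Lemma~\ref{lemma:integral-Wa2Wb-Gaussian-Hilbert} together with Lemmas~19 and~22 of \cite{Minh:2020regularizedDiv}), and then resums over $k,j$ via Lemmas~\ref{lemma:HS-inner-expansion}--\ref{lemma:logdet-2-inverse}. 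No trace-class intermediary and no approximation step are needed, because the series form of the log-density is already an $\Lcal^2(\Hcal,\nu)$-convergent expansion for general $S_i\in\SymHS(\Hcal)_{<I}$.

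The trade-off: your Isserlis-based organization is conceptually cleaner (nine block-terms rather than a double sum over eigen-indices), but it incurs the extra limiting argument at the end; the paper's eigenbasis route is more index-heavy but lands directly in the HS setting. Both rely on exactly the same Gaussian moment identities under $\nu$ and the same resolvent/Cauchy--Schwarz bounds (your $\log\dettwo$ Lipschitz estimate is Theorem~12 of \cite{Minh:2022KullbackGaussian}, which the paper invokes for the term \eqref{equation:K-upperbound-5}). For the upper bound, note that the paper first expands the squared bracket via Cauchy--Schwarz into four separate squares before bounding each; your sketch should make that splitting explicit to recover the stated constants.
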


From the inequality in \eqref{equation:upperbound-logRN-L2}, we immediately obtain the following.
\begin{corollary}
	\label{corollary:log-Radon-Nikodym-L2-convergence}
	Let $\mu = \Ncal(m,C)$, $\{\mu_k = \Ncal(m_k,C_k)\}_{k \in \Nbb} \in \Gauss(\Hcal,\mu_{*})$, 
	with $C = C_{*}^{1/2}(I-S)C_{*}^{1/2}$, 
	$C_k = C_{*}^{1/2}(I-S_k)C_{*}^{1/2}$, $S,S_k \in \SymHS(\Hcal)_{< I}$, $k \in \Nbb$. Assume that $\lim_{k \approach \infty}||m_k - m|| = 0$, $\lim_{k \approach \infty}||S_k - S||_{\HS} = 0$. Let $\nu \in \Gauss(\Hcal, \mu_{*})$ be fixed but arbitrary.
	Then 
	\begin{align}
		\lim_{k \approach \infty}\left\| \log\left\{{\frac{d\mu_k}{d\mu_{*}}(x)}\right\}- \log\left\{{\frac{d\mu}{d\mu_{*}}(x)}\right\}\right\|_{\Lcal^2(\Hcal,\nu)} = 
		0.
	\end{align}
	In particular, with $\nu = \mu$, 
	\begin{align}
		\lim_{k \approach \infty}\left\| \log\left\{{\frac{d\mu_k}{d\mu_{*}}(x)}\right\}- \log\left\{{\frac{d\mu}{d\mu_{*}}(x)}\right\}\right\|_{\Lcal^2(\Hcal,\mu)} = 
		0.
	\end{align}
\end{corollary}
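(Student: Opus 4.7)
The plan is to derive this corollary as a direct consequence of the explicit upper bound \eqref{equation:upperbound-logRN-L2} established in Proposition \ref{proposition:inner-logRN-mu-star}. I would apply that bound with $\mu_1 = \mu_k$, $\mu_2 = \mu$, so that $S_1 = S_k$, $S_2 = S$, $u_1 = u_k = C_{*}^{-1/2}(m_k - m_{*})$, $u_2 = u = C_{*}^{-1/2}(m - m_{*})$ (both well-defined in $\Hcal$ because $\mu_k, \mu \in \Gauss(\Hcal, \mu_{*})$), while $S_\nu$ and $u_\nu$ are the fixed parameters determined by $\nu$. The strategy is then to show that the right-hand side of \eqref{equation:upperbound-logRN-L2} tends to zero by isolating the two vanishing factors $\|S_k - S\|_{\HS}^2$ and $\|u_k - u\|^2$, and checking that every remaining factor stays bounded uniformly in $k$.

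For the uniform boundedness, the key observation is that the hypothesis $\|S_k - S\|_{\HS} \to 0$ implies $\|S_k - S\|_{\op} \to 0$ via $\|\cdot\|_{\op} \le \|\cdot\|_{\HS}$. Since $I - S > 0$ with bounded inverse, standard perturbation of invertible operators in $\Lcal(\Hcal)$ gives that $I - S_k$ is invertible for all sufficiently large $k$ with $\|(I-S_k)^{-1}\| \to \|(I-S)^{-1}\|$, hence uniformly bounded. The Hilbert–Schmidt norms $\|S_k\|_{\HS}$ are controlled by the triangle inequality, and $\|S_k S\|_{\HS} \le \|S_k\|_{\op}\|S\|_{\HS}$ is then also uniformly bounded. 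All the factors depending solely on $S_\nu$, $u_\nu$, and $S, u$ are fixed and finite. Substituting these estimates into \eqref{equation:upperbound-logRN-L2} gives a bound of the form $M_1 \|S_k - S\|_{\HS}^2 + M_2 \|u_k - u\|^2$ for constants $M_1, M_2 < \infty$ independent of $k$, which tends to zero.

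The particular case $\nu = \mu$ is then immediate by specializing. The main obstacle I anticipate is the passage from the hypothesis $\|m_k - m\| \to 0$ in $\Hcal$ to the convergence $\|u_k - u\| \to 0$ that actually appears in the bound \eqref{equation:upperbound-logRN-L2}: because $C_{*}^{-1/2}$ is unbounded, convergence of $m_k$ in $\Hcal$ does not in general transfer to convergence of the Cameron–Martin preimages $u_k$. The natural resolution, consistent with the parametrization \eqref{equation:Gauss-H-mustar} of $\Gauss(\Hcal, \mu_{*})$ used throughout the paper and with the Hilbert-manifold viewpoint of \cite{Minh2024:FisherRaoGaussian}, is to read the hypothesis as convergence in the parameters $(u_k, S_k) \to (u, S)$ in $\Hcal \times \SymHS(\Hcal)$. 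Once this is granted, the remaining argument is the routine bound-and-limit computation sketched above.
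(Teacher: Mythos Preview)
Your approach is exactly the paper's: the corollary is stated immediately after Proposition \ref{proposition:inner-logRN-mu-star} with the single remark that it follows from inequality \eqref{equation:upperbound-logRN-L2}. Your unpacking of the argument (uniform boundedness of $\|(I-S_k)^{-1}\|$ via operator-norm perturbation, boundedness of $\|S_k\|_{\HS}$, etc.) is correct and is precisely what the paper leaves implicit.

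The obstacle you flag is real and is not addressed by the paper either. The bound \eqref{equation:upperbound-logRN-L2} is expressed in terms of $\|u_1-u_2\|=\|C_{*}^{-1/2}(m_1-m_2)\|$, and since $C_{*}^{-1/2}$ is unbounded, the stated hypothesis $\|m_k-m\|\to 0$ does not imply $\|u_k-u\|\to 0$; a sequence like $m_k=m+\lambda_k^{1/2}e_k$ (with $C_{*}e_k=\lambda_k e_k$, $\lambda_k\downarrow 0$) gives a counterexample. So the corollary as literally stated does not follow from \eqref{equation:upperbound-logRN-L2} alone. Your proposed reading --- convergence of the Cameron--Martin parameters $u_k\to u$ in $\Hcal$ --- is the correct hypothesis that makes the argument go through, and is consistent with the Hilbert-manifold parametrization in \eqref{equation:Gauss-H-mustar}. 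It is also worth noting that in the two places where the paper actually invokes this corollary (the proof of Theorem \ref{theorem:geometric-interpolation-equivalent-Gaussian-Hilbert-space} for general $S_i\in\SymHS(\Hcal)_{<I}$), the approximating sequences have \emph{fixed} means $m_i^k\equiv m_i$ and $m_\alpha^k\equiv m_\alpha$, so $u_k=u$ throughout and the issue is vacuous in application.
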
	

For the proof of Proposition \ref{proposition:inner-logRN-mu-star}, we need the following technical results.

\begin{lemma}
	[\cite{Minh2024:FisherRaoGaussian}, Lemma 22]
	\label{lemma:HS-inner-expansion}
	Let $S,T \in \Sym(\Hcal) \cap \HS(\Hcal)$, with eigenvalues $\{\alpha_k\}_{k \in \Nbb}$, $\{\beta_k\}_{k \in \Nbb}$
	and corresponding orthonormal eigenvectors $\{\phi_k\}_{k \in \Nbb}$, $\{\psi_k\}_{k \in \Nbb}$, respectively.
	Let $A \in \Sym(\Hcal)$.
	Then
	\begin{align}
		\trace(ASAT) =\trace(SATA) = \sum_{k,j=1}^{\infty}\alpha_k \beta_j \la A\phi_k, \psi_j\ra^2.
	\end{align}
	If, in addition, $A \in \Sym(\Hcal)\cap \HS(\Hcal)$, then $\trace(AS) = \sum_{k=1}^{\infty}\alpha_k\la \phi_k, A\phi_k\ra$.
\end{lemma}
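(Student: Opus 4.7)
The plan is to split the argument into three short steps: establish that the relevant operators are trace class, derive the first equality from cyclicity, and compute the explicit series via the spectral theorem for compact self-adjoint operators.

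First, since $\HS(\Hcal)$ is a two-sided ideal in $\Lcal(\Hcal)$ and $A$ is bounded, I have $AS, AT \in \HS(\Hcal)$; hence $ASAT = (AS)(AT)$ and $SATA = (SA)(TA)$ are products of Hilbert--Schmidt operators and both lie in $\Tr(\Hcal)$, so their traces are well defined and basis independent. Applying the cyclic property of the trace with $X = AS$, $Y = AT$ then yields $\trace(ASAT) = \trace(XY) = \trace(YX) = \trace(ATAS)$, and one further cyclic shift gives $\trace(SATA)$, establishing the first equality.

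Next I would compute $\trace(SATA)$ in an orthonormal basis of $\Hcal$ containing the eigenvectors $\{\phi_k\}$ of $S$, extending by an orthonormal basis of $\ker(S)$ if necessary (the adjoined vectors carry eigenvalue $0$ and do not contribute). Using $S\phi_k = \alpha_k\phi_k$ together with the self-adjointness of $S$ and $A$,
\begin{align*}
\trace(SATA) = \sum_{k=1}^{\infty} \la \phi_k, SATA\phi_k\ra = \sum_{k=1}^{\infty} \alpha_k \la A\phi_k, TA\phi_k\ra.
\end{align*}
Substituting the spectral resolution $TA\phi_k = \sum_j \beta_j \la \psi_j, A\phi_k\ra \psi_j$ and expanding the inner product yields $\la A\phi_k, TA\phi_k\ra = \sum_{j=1}^{\infty} \beta_j \la A\phi_k, \psi_j\ra^2$ on the real Hilbert space $\Hcal$, which, after back-substitution, produces the claimed double sum.

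The principal technical obstacle is justifying the interchange of the summations over $k$ and $j$, since the spectral series for $T$ converges in $\HS(\Hcal)$ but not necessarily pointwise in a strong enough sense to swap sums a priori. This is settled by verifying absolute convergence: Parseval's identity gives $\sum_k \la A\phi_k, \psi_j\ra^2 = ||A\psi_j||^2 \leq ||A||^2$ and its symmetric counterpart, so Cauchy--Schwarz in the joint index $(k,j)$ yields
\begin{align*}
\sum_{k,j} |\alpha_k\beta_j|\la A\phi_k, \psi_j\ra^2 \leq ||A||^2\,||S||_{\HS}||T||_{\HS} < \infty,
\end{align*}
which permits any rearrangement and legitimizes Fubini on the double series. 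Finally, for the last assertion, $AS \in \Tr(\Hcal)$ since both factors are Hilbert--Schmidt, and evaluating $\trace(AS)$ directly in the basis $\{\phi_k\}$ gives $\trace(AS) = \sum_k \la \phi_k, A(S\phi_k)\ra = \sum_k \alpha_k \la \phi_k, A\phi_k\ra$, with absolute summability ensured by $\sum_k |\alpha_k \la \phi_k, A\phi_k\ra| \leq ||S||_{\HS}||A||_{\HS}$ via Cauchy--Schwarz.
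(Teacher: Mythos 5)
The paper does not prove this lemma itself --- it is imported verbatim as Lemma 22 of \cite{Minh2024:FisherRaoGaussian} --- so there is no in-paper proof to compare against; your argument must stand on its own, and it does. The three ingredients are all sound: $AS,AT\in\HS(\Hcal)$ by the ideal property, so $ASAT$ and $SATA$ are trace class and cyclicity of the trace (move the leading bounded factor $A$ past the trace-class tail $SAT$) gives $\trace(ASAT)=\trace(SATA)$; the spectral expansions in the eigenbases of $S$ and $T$ (extended over the kernels, which contribute nothing) give the double series; and your Cauchy--Schwarz bound $\sum_{k,j}|\alpha_k\beta_j|\la A\phi_k,\psi_j\ra^2\leq \|A\|^2\|S\|_{\HS}\|T\|_{\HS}$, obtained by splitting $|\alpha_k\beta_j|c_{kj}=(|\alpha_k|c_{kj}^{1/2})(|\beta_j|c_{kj}^{1/2})$ and using Parseval row/column bounds, correctly legitimizes the rearrangement. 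The one phrase worth tightening is ``one further cyclic shift'': from $\trace(ATAS)$ a single shift of the leading $A$ gives $\trace(TASA)$, not $\trace(SATA)$, and you need one more swap of the Hilbert--Schmidt factors $(TA)(SA)\mapsto(SA)(TA)$ --- or, more simply, go directly from $\trace(A\cdot SAT)$ to $\trace(SAT\cdot A)$. The final trace-class assertion and its absolute-summability bound are also correct. Your method is the same spectral-expansion-plus-resummation technique the paper uses for its sibling results (Lemmas \ref{lemma:inner-UV-expansion} and \ref{lemma:quadratic-form-expansion}), so it fits the surrounding development naturally.
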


\begin{lemma}
	\label{lemma:inner-UV-expansion}
	Let $S,T \in \Sym(\Hcal)$ be compact, with eigenvalues $\{\alpha_k\}_{k \in \Nbb}$, $\{\beta_k\}_{k \in \Nbb}$
	and corresponding orthonormal eigenvectors $\{\phi_k\}_{k \in \Nbb}$, $\{\psi_k\}_{k \in \Nbb}$, respectively.
	Let $A \in \Sym(\Hcal)$.
	Then $\forall u,v \in \Hcal$,
	\begin{align}
		\la Su, ATv\ra = \sum_{k,j=1}^{\infty}\alpha_k\beta_j \la u,\phi_k\ra\la v, \psi_j\ra \la \phi_k, A\psi_j\ra.
	\end{align}
\end{lemma}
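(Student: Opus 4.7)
The plan is to apply the spectral theorem for compact self-adjoint operators and then chase the inner product through the resulting series, in exactly the spirit of Lemma~\ref{lemma:HS-inner-expansion}. Since $S$ and $T$ are compact and self-adjoint on $\Hcal$, I will take $\{\phi_k\}_{k\in\Nbb}$ and $\{\psi_j\}_{j\in\Nbb}$ to be complete orthonormal bases of $\Hcal$ (extending the eigenvectors of nonzero eigenvalues by any orthonormal basis of $\ker S$ or $\ker T$ with eigenvalue $0$), so that $S\phi_k=\alpha_k\phi_k$ and $T\psi_j=\beta_j\psi_j$ for all $k,j$.

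First I would expand $u$ and $v$ by Parseval,
\[
u=\sum_{k=1}^{\infty}\la u,\phi_k\ra\phi_k,\qquad v=\sum_{j=1}^{\infty}\la v,\psi_j\ra\psi_j,
\]
with norm convergence in $\Hcal$. Applying the bounded operators $S$ and $T$ termwise and then the bounded operator $A$ to the second series, continuity yields the norm-convergent expansions
\[
Su=\sum_{k=1}^{\infty}\alpha_k\la u,\phi_k\ra\phi_k,\qquad ATv=\sum_{j=1}^{\infty}\beta_j\la v,\psi_j\ra A\psi_j.
\]
Then continuity and bilinearity of the inner product give first
\[
\la Su,ATv\ra=\sum_{j=1}^{\infty}\beta_j\la v,\psi_j\ra\la Su,A\psi_j\ra,
\]
and expanding $\la Su,A\psi_j\ra=\sum_{k=1}^{\infty}\alpha_k\la u,\phi_k\ra\la\phi_k,A\psi_j\ra$ via the spectral expansion of $Su$ and substituting produces the claimed identity as an iterated sum in $j$ and then $k$.

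The only subtlety, and thus the main (mild) obstacle, is the interpretation of the double sum. It is handled by reading it as the iterated limit just described, equivalently as $\lim_{N\to\infty}\sum_{k,j\le N}\alpha_k\beta_j\la u,\phi_k\ra\la v,\psi_j\ra\la\phi_k,A\psi_j\ra$, which arises directly as $\la S_N u,AT_N v\ra$ for the finite-rank truncations $S_N=\sum_{k\le N}\alpha_k\phi_k\otimes\phi_k$, $T_N=\sum_{j\le N}\beta_j\psi_j\otimes\psi_j$; these converge strongly to $S,T$, and boundedness of $A$ together with continuity of the inner product then passes to the limit. No hypothesis beyond boundedness of $A$ and compactness of $S,T$ is required, since each series is norm-convergent in $\Hcal$ at every step.
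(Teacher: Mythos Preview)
Your proof is correct and takes essentially the same approach as the paper: both arguments rely on the spectral expansions of $S$ and $T$ in the orthonormal bases $\{\phi_k\}$, $\{\psi_j\}$ together with Parseval's identity. The only cosmetic difference is direction---the paper starts from the double sum and collapses it via Parseval to $\la Su,ATv\ra$, while you start from $\la Su,ATv\ra$ and expand; your extra remark on finite-rank truncations is a welcome clarification but not a different method.
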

\begin{proof} With $\{\phi_k\}_{k \in \Nbb}$, $\{\psi_k\}_{k \in \Nbb}$ being both orthonormal bases in $\Hcal$,
	\begin{align*}
		&\sum_{k,j=1}^{\infty}\alpha_k\beta_j \la u,\phi_k\ra\la v, \psi_j\ra \la \phi_k, A\psi_j\ra = \sum_{k=1}^{\infty}
		\alpha_k \la u,\phi_k\ra\sum_{j=1}^{\infty}\la v, T\psi_j\ra \la A\phi_k, \psi_j\ra
		\\
		&= \sum_{k=1}^{\infty}
		\la u,S\phi_k\ra\la Tv, A\phi_k\ra = \sum_{k=1}^{\infty}
		\la Su,\phi_k\ra\la ATv, \phi_k\ra = \la Su, ATv\ra.
	\end{align*}
	\qed
\end{proof}

\begin{lemma}
	\label{lemma:quadratic-form-expansion}
	Let $S \in \Sym(\Hcal)$ be compact, with eigenvalues $\{\alpha_k\}_{k \in \Nbb}$
	and corresponding orthonormal eigenvectors $\{\phi_k\}_{k \in \Nbb}$. Then $\forall u \in \Hcal$,
	\begin{align}
		\la u, Su\ra = \sum_{k=1}^{\infty}\alpha_k \la u, \phi_k\ra^2.
	\end{align}
\end{lemma}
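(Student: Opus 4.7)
The plan is to invoke the spectral theorem for compact self-adjoint operators and then simply expand. Since $S \in \Sym(\Hcal)$ is compact, the spectral theorem gives the representation $S = \sum_{k=1}^{\infty} \alpha_k \, \phi_k \otimes \phi_k$, where the series converges in operator norm, and equivalently $Sv = \sum_{k=1}^{\infty} \alpha_k \langle v, \phi_k\rangle \phi_k$ for every $v \in \Hcal$, with convergence in $\Hcal$.

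First I would fix an arbitrary $u \in \Hcal$ and apply the spectral representation to compute $Su$, obtaining $Su = \sum_{k=1}^{\infty} \alpha_k \langle u, \phi_k\rangle \phi_k$ as a convergent series in $\Hcal$. Then, using continuity of the inner product $\langle u, \cdot\rangle$, I would pull it inside the sum to get
\begin{align*}
\langle u, Su\rangle = \sum_{k=1}^{\infty} \alpha_k \langle u, \phi_k\rangle \langle u, \phi_k\rangle = \sum_{k=1}^{\infty} \alpha_k \langle u, \phi_k\rangle^2.
\end{align*}
This is essentially a one-line proof once the spectral decomposition is in place.

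The only subtlety, which is mild, concerns completeness of $\{\phi_k\}_{k\in\Nbb}$: if $\ker(S) \neq \{0\}$, the eigenvectors associated with nonzero eigenvalues form an orthonormal basis only of $\overline{\range(S)} = \ker(S)^{\perp}$. I would address this by decomposing $u = u_0 + u_1$ with $u_0 \in \ker(S)$ and $u_1 \in \ker(S)^{\perp}$; then $Su = Su_1$ and $\langle u, Su\rangle = \langle u_1, Su_1\rangle$, while $\langle u, \phi_k\rangle = \langle u_1, \phi_k\rangle$ for all $k$, so both sides of the identity depend only on $u_1$ and the formula follows from the spectral expansion on $\ker(S)^{\perp}$. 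I do not expect any real obstacle here; the main (trivial) point is just making sure the series manipulations are justified by the convergence provided by the spectral theorem.
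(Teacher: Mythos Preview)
Your proposal is correct and follows essentially the same approach as the paper, which also expands via the spectral decomposition (writing $\alpha_k\la u,\phi_k\ra = \la Su,\phi_k\ra$ and applying Parseval). The paper simply asserts that $\{\phi_k\}_{k\in\Nbb}$ is an orthonormal basis without discussing the kernel; your treatment of the $\ker(S)\neq\{0\}$ case is a bit more careful but not a different method.
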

\begin{proof} With $\{\phi_k\}_{k \in \Nbb}$ being an orthonormal basis in $\Hcal$,
	\begin{align*}
		\sum_{k=1}^{\infty}\alpha_k \la u, \phi_k\ra^2 = \sum_{k=1}^{\infty}\la u, S\phi_k\ra \la u, \phi_k \ra= \sum_{k=1}^{\infty}\la Su, \phi_k\ra \la u, \phi_k\ra = \la u, Su\ra.
	\end{align*}
	\qed
\end{proof}

\begin{lemma}[\cite{Minh2024:FisherRaoGaussian}, Lemma 23]
	\label{lemma:logdet-2-inverse}
	Let $A \in \SymHS(\Hcal)_{< I}$ with eigenvalues $\{\lambda_k\}_{k=1}^{\infty}$. Then
	\begin{align}
		\log\dettwo[(I-A)^{-1}] &= -\sum_{k=1}^{\infty}\left[\frac{\alpha_k}{1-\alpha_k} + \log(1-\alpha_k)\right]
		\\
		&= - [\log\dettwo(I-A) + \trace(A^2(I-A)^{-1})].
	\end{align}
	In particular, for $A \in \SymTr(\Hcal)_{< I}$, $\log\dettwo[(I-A)^{-1}] = -\log\det(I-A) - \trace[A(I-A)^{-1}]$.
\end{lemma}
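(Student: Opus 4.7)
The plan is to work directly from the eigenvalue product definition of the Hilbert-Carleman determinant and then rearrange to split off the two terms on the right. Throughout, write the eigenvalues of $A$ as $\{\alpha_k\}_{k=1}^{\infty}$ (the $\lambda_k/\alpha_k$ discrepancy in the statement is a typo). Since $A \in \Sym(\Hcal) \cap \HS(\Hcal)$ and $I-A > 0$ is positive definite (so $(I-A)^{-1}$ is bounded), the operator $A(I-A)^{-1}$ lies in $\Sym(\Hcal)\cap\HS(\Hcal)$ with eigenvalues $\alpha_k/(1-\alpha_k)$, and similarly $A^2(I-A)^{-1} \in \Tr(\Hcal)$ with eigenvalues $\alpha_k^2/(1-\alpha_k)$.

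Next, observe $(I-A)^{-1} = I + A(I-A)^{-1}$, so the standard eigenvalue product formula
$\dettwo(I+B) = \prod_k (1+\beta_k)\exp(-\beta_k)$
for $B \in \HS(\Hcal)$ with eigenvalues $\beta_k$ applies with $\beta_k = \alpha_k/(1-\alpha_k)$. Taking logarithms gives
\begin{align*}
\log\dettwo[(I-A)^{-1}] &= \sum_{k=1}^{\infty}\Bigl[\log\bigl(1+\tfrac{\alpha_k}{1-\alpha_k}\bigr) - \tfrac{\alpha_k}{1-\alpha_k}\Bigr] \\
&= \sum_{k=1}^{\infty}\Bigl[-\log(1-\alpha_k) - \tfrac{\alpha_k}{1-\alpha_k}\Bigr],
\end{align*}
which is the first asserted identity.

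For the second identity, use the algebraic splitting $\tfrac{\alpha_k}{1-\alpha_k} = \alpha_k + \tfrac{\alpha_k^2}{1-\alpha_k}$ to rewrite each summand as
$-\bigl[\log(1-\alpha_k) + \alpha_k\bigr] - \tfrac{\alpha_k^2}{1-\alpha_k}$.
Both resulting series converge absolutely: the first because $\log(1-\alpha_k)+\alpha_k = O(\alpha_k^2)$ combined with $A \in \HS$, the second because $A^2(I-A)^{-1}\in \Tr(\Hcal)$. This justifies the rearrangement and yields
$-\sum_k[\log(1-\alpha_k)+\alpha_k] - \sum_k \tfrac{\alpha_k^2}{1-\alpha_k} = -\log\dettwo(I-A) - \trace(A^2(I-A)^{-1})$,
using once more the eigenvalue formula $\log\dettwo(I-A) = \sum_k[\log(1-\alpha_k)+\alpha_k]$ and the trace formula for $A^2(I-A)^{-1}$.

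For the particular case $A\in\SymTr(\Hcal)_{<I}$, the identity $\dettwo(I-A) = \det(I-A)\exp(\trace A)$ gives $\log\dettwo(I-A) = \log\det(I-A)+\trace(A)$, so combining with $\trace(A) + \trace(A^2(I-A)^{-1}) = \trace[A(I-A)^{-1}]$ (valid because all three operators are trace class) yields the stated formula. The only delicate point is justifying the rearrangement of series in the non-trace-class case, which the absolute convergence argument above handles; everything else is bookkeeping with the eigenvalue product formulas for $\det$ and $\dettwo$.
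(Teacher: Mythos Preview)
Your proof is correct. The paper does not actually prove this lemma; it is quoted from \cite{Minh2024:FisherRaoGaussian} (Lemma~23) and used as a black box, so there is no argument in the present paper to compare against. Your eigenvalue computation via $(I-A)^{-1}=I+A(I-A)^{-1}$, the splitting $\frac{\alpha_k}{1-\alpha_k}=\alpha_k+\frac{\alpha_k^2}{1-\alpha_k}$, and the absolute-convergence justification for rearranging the series are all sound and constitute a clean self-contained proof.
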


\begin{lemma}
	[\cite{Minh:2020regularizedDiv}, Lemma 24]
	\label{lemma:integral-a2b-Gaussian-Hilbert}
	Let $\Ncal(m,Q) \in \Gauss(\Hcal)$ be fixed but arbitrary. 
	For any pair $a,b \in \Hcal$,
	\begin{align}
		\int_{\Hcal}\la x-m, a\ra^2\la x-m, b\ra \Ncal(m,Q)(dx) = 0.
	\end{align}
\end{lemma}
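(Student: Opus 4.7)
The plan is to exploit the symmetry of the centered Gaussian distribution. First, perform the change of variables $y = x - m$, so that $y$ is distributed according to $\Ncal(0, Q)$ and the integral becomes
\begin{align*}
\int_{\Hcal}\la x-m, a\ra^2\la x-m, b\ra \, \Ncal(m,Q)(dx) = \int_{\Hcal}\la y, a\ra^2\la y, b\ra \, \Ncal(0,Q)(dy).
\end{align*}
This reduces the problem to showing that the zero-mean version of the integral vanishes.

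Next, I would invoke the fact that the centered Gaussian measure $\Ncal(0,Q)$ is invariant under the reflection $y \mapsto -y$, a standard consequence of its characteristic function $\exp(-\tfrac{1}{2}\la t, Qt\ra)$ being an even function of $t$. The integrand $\la y, a\ra^2 \la y, b\ra$ is a cubic polynomial in $y$: the factor $\la y, a\ra^2$ is even, while $\la y, b\ra$ is odd under $y \mapsto -y$. Therefore, applying the change of variables $y \mapsto -y$ (which leaves the measure invariant) produces the identity
\begin{align*}
\int_{\Hcal}\la y, a\ra^2\la y, b\ra \, \Ncal(0,Q)(dy) = -\int_{\Hcal}\la y, a\ra^2\la y, b\ra \, \Ncal(0,Q)(dy),
\end{align*}
from which the integral must equal zero.

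The only technical point to address is integrability: one must confirm that $\la y, a\ra^2 \la y, b\ra \in \Lcal^1(\Hcal, \Ncal(0,Q))$ so that the reflection argument applies to a genuine (finite) integral rather than to a conditionally convergent expression. This follows from Fernique's theorem, which guarantees the existence of exponential moments of $\|y\|$ under any Gaussian measure on $\Hcal$, and hence all polynomial moments. Alternatively, one can verify integrability directly by Cauchy--Schwarz against the bound $\la y, a\ra^4 \in \Lcal^1$ together with $\la y, b\ra^2 \in \Lcal^1$. I expect no substantial obstacle here; the entire proof is essentially a one-line symmetry argument once integrability is justified, and the lemma is a special case of the general fact that all odd-degree polynomial moments of a centered Gaussian measure vanish.
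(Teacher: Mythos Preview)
Your symmetry argument is correct and complete: centering, using the invariance of $\Ncal(0,Q)$ under $y \mapsto -y$, and noting that $\la y,a\ra^2\la y,b\ra$ is odd gives the result, with integrability handled by Fernique's theorem or Cauchy--Schwarz as you describe. Note that the paper does not actually prove this lemma itself; it is quoted from an external reference (Lemma 24 of \cite{Minh:2020regularizedDiv}), so there is no in-paper proof to compare against, but your argument is the standard one and would be accepted without issue.
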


\begin{lemma}
	\label{lemma:integral-Wa2Wb-Gaussian-Hilbert}
	Let $\nu = \Ncal(m_{\nu}, C_{\nu}) \in \Gauss(\Hcal, \mu_{*})$, $m_{\nu} \in \Hcal$, $C_{\nu} = C_{*}^{1/2}(I-S_{\nu})C_{*}^{1/2}$,
	$S_{\nu} \in \SymHS(\Hcal)_{< I}$. Then for any $a,b \in \Hcal$,
	\begin{align}
		\int_{\Hcal}W_a^2(x)W_b(x)d\nu(x)& = \la  (I-S_{\nu})a, a\ra\la C_{*}^{-1/2}(m_{\nu}-m_{*}), b\ra 
		\nonumber
		\\
		& \quad+  2\la C_{*}^{-1/2}(m_{\nu} - m_{*}), a\ra\la (I-S_{\nu})a, b\ra 
		\nonumber
		\\
		& \quad + \la C_{*}^{-1/2}(m_{\nu} - m_{*}), a\ra^2\la C_{*}^{-1/2}(m_{\nu}-m_{*}), b\ra.
	\end{align}
\end{lemma}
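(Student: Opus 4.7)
The plan is to decompose each white noise functional into its $\nu$-centered part plus a deterministic shift, expand the cubic polynomial, and then use Gaussian odd-moment vanishing. Writing $x - m_{*} = (x - m_{\nu}) + (m_{\nu} - m_{*})$ and using that $m_{\nu} - m_{*} \in \range(C_{*}^{1/2})$ so that $C_{*}^{-1/2}(m_{\nu} - m_{*}) \in \Hcal$, I set
\begin{align*}
Y_a(x) = \la x - m_{\nu}, C_{*}^{-1/2}a\ra, \qquad c_a = \la C_{*}^{-1/2}(m_{\nu} - m_{*}), a\ra,
\end{align*}
and analogously $Y_b, c_b$, so that $W_a(x) = Y_a(x) + c_a$ and $W_b(x) = Y_b(x) + c_b$ as elements of $\Lcal^2(\Hcal, \nu)$. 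As in the construction of the white noise map reviewed earlier, $Y_a$ is defined rigorously as the $\Lcal^2(\Hcal, \nu)$ limit of $\la x - m_{\nu}, C_{*}^{-1/2}P_N a\ra$, where $\{P_N\}$ are the spectral projections of $C_{*}$.

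The key moment identities to establish are that, under $\nu$, $Y_a$ is centered Gaussian with
\begin{align*}
\bE_{\nu}[Y_a] = 0, \qquad \bE_{\nu}[Y_a Y_b] = \la a, (I-S_{\nu})b\ra.
\end{align*}
These follow by applying the covariance identity to the truncations $P_N a, P_N b$ and passing to the limit: $\bE_{\nu}\la x-m_{\nu}, C_{*}^{-1/2}P_N a\ra\la x-m_{\nu}, C_{*}^{-1/2}P_N b\ra = \la C_{*}^{-1/2}P_N a, C_{\nu} C_{*}^{-1/2}P_N b\ra = \la P_N a, (I-S_{\nu})P_N b\ra$, using $C_{\nu} = C_{*}^{1/2}(I-S_{\nu})C_{*}^{1/2}$ and the commutation of $P_N$ with $C_{*}^{\pm 1/2}$ on the appropriate domains. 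This is uniformly bounded by $\|I-S_{\nu}\|\,\|a\|\,\|b\|$, so the $\Lcal^2(\Hcal,\nu)$ limit exists and identifies $\bE_{\nu}[Y_a Y_b]$.

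Next I expand
\begin{align*}
W_a^2 W_b = Y_a^2 Y_b + c_b\, Y_a^2 + 2 c_a\, Y_a Y_b + 2 c_a c_b\, Y_a + c_a^2\, Y_b + c_a^2 c_b
\end{align*}
and integrate term by term against $\nu$. The terms linear in $Y_a$ or $Y_b$ vanish by centering, and the cubic term $\bE_{\nu}[Y_a^2 Y_b]$ vanishes by Lemma \ref{lemma:integral-a2b-Gaussian-Hilbert} applied to $\nu$ (approximated through the $P_N$-truncations, which reduce to centered Gaussians on finite-dimensional subspaces where the lemma applies directly). The surviving contributions
\begin{align*}
c_b\, \bE_{\nu}[Y_a^2] + 2 c_a\, \bE_{\nu}[Y_a Y_b] + c_a^2 c_b = c_b\la (I-S_{\nu})a, a\ra + 2 c_a\la (I-S_{\nu})a, b\ra + c_a^2 c_b
\end{align*}
recombine into the claimed identity.

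The main (and only real) obstacle is the rigorous justification of $Y_a$ and its moments in the infinite-dimensional setting, since $C_{*}^{-1/2}a$ need not lie in $\Hcal$; this is handled uniformly via the spectral-projection truncation of $a$ and $b$, and the Lemma \ref{lemma:integral-a2b-Gaussian-Hilbert} application is transferred to the limit through the same truncation. Everything else is algebraic.
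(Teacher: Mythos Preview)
Your proposal is correct and follows essentially the same approach as the paper: both proofs write $W_a = Y_a + c_a$ with $Y_a$ the $\nu$-centered part, expand the cubic, kill odd Gaussian moments via Lemma~\ref{lemma:integral-a2b-Gaussian-Hilbert}, and identify the surviving second moments using $C_{\nu} = C_{*}^{1/2}(I-S_{\nu})C_{*}^{1/2}$. The only cosmetic difference is that the paper first restricts to $a,b \in \range(C_{*}^{1/2})$ and then extends by density, whereas you approximate via the spectral projections $P_N a, P_N b$; these are equivalent ways of handling the unboundedness of $C_{*}^{-1/2}$.
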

\begin{proof}
	For $a,b \in \range(C_{*}^{1/2})$, we have
	\begin{align*}
		&J = \int_{\Hcal}W_a^2(x)W_b(x)d\nu(x) 
		\\
		&= \int_{\Hcal}\la x-m_{*}, C_{*}^{-1/2}a\ra^2 \la x-m_{*}, C_{*}^{-1/2}b\ra\Ncal(m_{\nu},C_{\nu})(dx)
		\\
		& = \int_{\Hcal}\la x-m_{\nu} + m_{\nu} - m_{*}, C_{*}^{-1/2}a\ra^2 \la x-m_{\nu} + m_{\nu} - m_{*},C_{*}^{-1/2}b\ra \Ncal(m_{\nu},C_{\nu})(dx)
		\\
		& = \int_{\Hcal}[\la x- m_{\nu}, C_{*}^{-1/2}a\ra^2 + 2 \la x-m_{\nu}, C_{*}^{-1/2}a\ra\la m_{\nu} - m_{*}, C_{*}^{-1/2}a\ra 
		\\
		&+ \la m_{\nu} - m_{*}, C_{*}^{-1/2}a\ra^2][\la x-m_{\nu}, C_{*}^{-1/2}b\ra + \la m_{\nu}-m_{*}, C_{*}^{-1/2}b\ra] \Ncal(m_{\nu},C_{\nu})(dx)
		\\
		& = \int_{\Hcal}\la x- m_{\nu}, C_{*}^{-1/2}a\ra^2\la x-m_{\nu}, C_{*}^{-1/2}b\ra \Ncal(m_{\nu},C_{\nu})(dx)
		\\
		&+ \la m_{\nu}-m_{*}, C_{*}^{-1/2}b\ra\int_{\Hcal}\la x- m_{\nu}, C_{*}^{-1/2}a\ra^2  \Ncal(m_{\nu},C_{\nu})(dx)
		\\
		&+ 2\la m_{\nu} - m_{*}, C_{*}^{-1/2}a\ra\int_{\Hcal}\la x-m_{\nu}, C_{*}^{-1/2}a\ra \la x-m_{\nu}, C_{*}^{-1/2}b\ra\Ncal(m_{\nu},C_{\nu})(dx)
		\\
		&+ 2\la m_{\nu} - m_{*}, C_{*}^{-1/2}a\ra\la m_{\nu}-m_{*}, C_{*}^{-1/2}b\ra\int_{\Hcal} \la x-m_{\nu}, C_{*}^{-1/2}a\ra \Ncal(m_{\nu},C_{\nu})(dx)
		\\
		&+ \la m_{\nu} - m_{*}, C_{*}^{-1/2}a\ra^2\int_{\Hcal}\la x-m_{\nu}, C_{*}^{-1/2}b\ra\Ncal(m_{\nu},C_{\nu})(dx)
		\\
		&+\la m_{\nu} - m_{*}, C_{*}^{-1/2}a\ra^2\la m_{\nu}-m_{*}, C_{*}^{-1/2}b\ra.
	\end{align*}
	With $C_{\nu} = C_{*}^{1/2}(I-S_{\nu})C_{*}^{1/2}$, using Lemma \ref{lemma:integral-a2b-Gaussian-Hilbert}, we obtain
	\begin{align*}
		J &= \la C_{*}^{-1/2}(m_{\nu}-m_{*}), b\ra \la  C_{*}^{1/2}(I-S_{\nu})C_{*}^{1/2}(C_{*}^{-1/2}a), C_{*}^{-1/2}a\ra
		\\
		&\quad + 2\la C_{*}^{-1/2}(m_{\nu} - m_{*}), a\ra \la C_{*}^{1/2}(I-S_{\nu})C_{*}^{1/2}(C_{*}^{-1/2}a), C_{*}^{-1/2}b\ra 
		\\
		&\quad + \la C_{*}^{-1/2}(m_{\nu} - m_{*}), a\ra^2\la C_{*}^{-1/2}(m_{\nu}-m_{*}), b\ra
		\\
		& = \la  (I-S_{\nu})a, a\ra\la C_{*}^{-1/2}(m_{\nu}-m_{*}), b\ra  +  2\la C_{*}^{-1/2}(m_{\nu} - m_{*}), a\ra\la (I-S_{\nu})a, b\ra 
		\\
		& \quad + \la C_{*}^{-1/2}(m_{\nu} - m_{*}), a\ra^2\la C_{*}^{-1/2}(m_{\nu}-m_{*}), b\ra.
	\end{align*}
	Since $\range(C_{*}^{1/2})$ is dense in $\Hcal$,
	the result for the general case $a,b \in \Hcal$ is then obtained via a limiting argument.
	\qed
\end{proof}

\begin{proof}[\textbf{of Proposition \ref{proposition:inner-logRN-mu-star}}]
	Let $\{\alpha_k\}_{k \in \Nbb}$ and $\{\beta_k\}_{k \in \Nbb}$ be the eigenvalues of $S_1$ and $S_2$, with corresponding orthonormal eigenvectors
	$\{\phi_k\}_{k \in \Nbb}$ and $\{\psi_k\}_{k \in \Nbb}$, respectively.
	Consider the white noise mapping $W: \Hcal \mapto \Lcal^2(\Hcal, \mu_{*})$. Let $u_1 = C_{*}^{-1/2}(m_1-m_{*})$,
	$u_2 =  C_{*}^{-1/2}(m_1-m_{*})$. Then
	the Radon-Nikodym densities $\frac{d\mu_1}{d\mu_{*}}$ and $\frac{d\mu_2}{d\mu_{*}}$ are given by
	\begin{align*}
		&\log\left\{{\frac{d\mu_1}{d\mu_{*}}(x)}\right\} = 
		-\frac{1}{2}\sum_{k=1}^{\infty}\left[\frac{\alpha_k}{1-\alpha_k}W^2_{\phi_k}(x)
		 - \frac{2}{1-\alpha_k}\la u_1, \phi_k\ra W_{\phi_k} 
		+\log(1-\alpha_k)\right].
		\\
		&\log\left\{{\frac{d\mu_2}{d\mu_{*}}(x)}\right\} =
		-\frac{1}{2}\sum_{k=1}^{\infty}\left[\frac{\beta_k}{1-\beta_k}W^2_{\psi_k}(x) 
		-\frac{2}{1-\beta_k}\la u_2, \psi_k\ra W_{\psi_k}
		+ \log(1-\beta_k)\right].
	\end{align*}
	By Lemma 19 in \cite{Minh:2020regularizedDiv}, with $u_{\nu} = C_{*}^{-1/2}(m_{\nu} - m_{*})$, for any $a,b \in \Hcal$,
	\begin{align*}
		&\int_{\Hcal}W_a(x)d\nu(x) = \la u_{\nu}, a\ra,
		\\
		&\int_{\Hcal}W_a(x)W_b(x)d\nu(x) = \la a, (I-S_{\nu})b\ra + \la u_{\nu}, a\ra \la u_{\nu}, b\ra,
		\\
		&\int_{\Hcal}W^2_{a}(x)d\nu(x) = \la a, (I-S_{\nu})a\ra + \la u_{\nu}, a\ra^2.
	\end{align*}
	By Lemma 22 in \cite{Minh:2020regularizedDiv}, for any pair $a,b \in \Hcal$, with $u_{\nu} = C_{*}^{-1/2}(m_{\nu} - m_{*})$,
	\begin{align*}
		\int_{\Hcal}W^2_a(x)W^2_b(x)d\nu(x) &=\la a, (I-S_{\nu})a\ra \la b, (I-S_{\nu})b\ra + 2 \la a,(I-S_{\nu})b\ra^2
		\\
		& \quad + \la u_{\nu}, b\ra^2 \la a, (I-S_{\nu})a\ra
		+ 4\la u_{\nu}, a\ra \la u_{\nu}, b\ra\la a, (I-S_{\nu})b\ra
		\\
		& \quad + \la u_{\nu}, a\ra^2 \la b, (I-S_{\nu})b\ra
		+ \la u_{\nu}, a\ra^2\la u_{\nu}, b\ra^2.
	\end{align*}
	By Lemma \ref{lemma:integral-Wa2Wb-Gaussian-Hilbert}, for any pair $a,b \in \Hcal$, with $u_{\nu} = C_{*}^{-1/2}(m_{\nu} - m_{*})$,
	\begin{align*}
		\int_{\Hcal}W_a^2(x)W_b(x)d\nu(x)& = \la  (I-S_{\nu})a, a\ra\la u_{\nu}, b\ra 
		+  2\la u_{\nu}, a\ra\la (I-S_{\nu})a, b\ra 
		+ \la u_{\nu}, a\ra^2\la u_{\nu}, b\ra.
	\end{align*}
	For each pair $k, j \in \Nbb$, with $u_i = C_{*}^{-1/2}(m_i - m_{*})$, $i=1,2$,
	\begin{align*}
		J_{kj} &= \int_{\Hcal}\left[\frac{\alpha_k}{1-\alpha_k}W^2_{\phi_k}(x)  - \frac{2}{1-\alpha_k}\la u_1, \phi_k\ra W_{\phi_k}(x) + \log(1-\alpha_k)\right]
		\\
		& \quad \quad \quad \times \left[\frac{\beta_j}{1-\beta_j}W^2_{\psi_j}(x) - \frac{2}{1-\beta_j}\la u_2, \psi_j\ra W_{\psi_j}(x)+ \log(1-\beta_j)\right]d\nu(x)
		\\
		& = J_{kj,1} + J_{kj,2},
	\end{align*}
	where
	\begin{align*}
	J_{kj,1} &= \int_{\Hcal}\left[\frac{\alpha_k}{1-\alpha_k}W^2_{\phi_k}(x)   + \log(1-\alpha_k)\right]
	\left[\frac{\beta_j}{1-\beta_j}W^2_{\psi_j}(x) + \log(1-\beta_j)\right]d\nu(x),
	\\
	J_{kj,2}&=-\frac{2}{1-\beta_j}\int_{\Hcal}\left[\frac{\alpha_k}{1-\alpha_k}W^2_{\phi_k}(x)+ \log(1-\alpha_k)\right]\la u_2, \psi_j\ra W_{\psi_j}(x)d\nu(x)
	\\
	&\quad + \frac{4}{(1-\alpha_k)(1-\beta_j)}\int_{\Hcal}\la u_1, \phi_k\ra W_{\phi_k}(x)\la u_2, \psi_j\ra W_{\psi_j}(x)d\nu(x)
	\\
	& \quad - \frac{2}{1-\alpha_k} \int_{\Hcal}\left[\frac{\beta_j}{1-\beta_j}W^2_{\psi_j}(x) + \log(1-\beta_j)\right]\la u_1, \phi_k\ra W_{\phi_k}d\nu(x).
	\end{align*}
	For each fixed pair $N,M \in \Nbb$, define 
	\begin{align*}
		f &= \frac{1}{2}\sum_{k=1}^{\infty}\left[\frac{\alpha_k}{1-\alpha_k}W^2_{\phi_k}(x) -\frac{2}{1-\alpha_k}\la u_1, \phi_k\ra W_{\phi_k}+ \log(1-\alpha_k)\right],
		\\
		f_N &= \frac{1}{2}\sum_{k=1}^{N}\left[\frac{\alpha_k}{1-\alpha_k}W^2_{\phi_k}(x) -\frac{2}{1-\alpha_k}\la u_1, \phi_k\ra W_{\phi_k}+ \log(1-\alpha_k)\right],
		\\
		g &= \frac{1}{2}\sum_{k=1}^{\infty}\left[\frac{\beta_k}{1-\beta_k}W^2_{\psi_k}(x) -\frac{2}{1-\beta_k}\la u_2, \psi_k\ra W_{\psi_k}+ \log(1-\beta_k)\right],\;
		\\
		g_M &= \frac{1}{2}\sum_{k=1}^{M}\left[\frac{\beta_k}{1-\beta_k}W^2_{\psi_k}(x) -\frac{2}{1-\beta_k}\la u_2, \psi_k\ra W_{\psi_k}+ \log(1-\beta_k)\right].
	\end{align*}
	By Propositions 8 and 10 in \cite{Minh:2020regularizedDiv}, $f,g \in \Lcal^2(\Hcal,\nu)$, with $\lim\limits_{N \approach \infty}||f_N - f||_{\Lcal^2(\Hcal,\nu)} = 0$, $\lim\limits_{M \approach \infty}||g_M - g||_{\Lcal^2(\Hcal,\nu)} = 0$. By H\"older's inequality,
	$\lim\limits_{N,M \approach \infty}||f_Ng_M - fg||_{\Lcal^1(\Hcal,\nu)} = 0$.
	It follows that
	\begin{align*}
		J& = \left\la \log\left\{{\frac{d\mu_1}{d\mu_{*}}(x)}\right\}, \log\left\{{\frac{d\mu_2}{d\mu_{*}}(x)}\right\}\right\ra_{\Lcal^2(\Hcal,\nu)}
		= \int_{\Hcal}f(x)g(x)\nu(dx) 
		\\
		&= \lim_{N, M \approach \infty}\int_{\Hcal}f_N(x)g_M(x)\nu(dx) = \frac{1}{4}\sum_{k,j=1}^{\infty}J_{kj}
		= \frac{1}{4}\sum_{k,j=1}^{\infty}[J_{kj,1} + J_{kj,2}].
	\end{align*}
	Let us now calculate $J_{kj,1}$ and $J_{kj,2}$. For each pair $k,j \in \Nbb$,
	\begin{align*}
		&J_{kj,1} = \int_{\Hcal}\left[\frac{\alpha_k}{1-\alpha_k}W^2_{\phi_k}(x)   + \log(1-\alpha_k)\right]
		\left[\frac{\beta_j}{1-\beta_j}W^2_{\psi_j}(x) + \log(1-\beta_j)\right]d\nu(x)
		\\
		&= \frac{\alpha_k}{1-\alpha_k}\frac{\beta_j}{1-\beta_j}[\la \phi_k, (I-S_{\nu})\phi_k\ra \la \psi_j, (I-S_{\nu})\psi_j\ra + 2 \la \phi_k,(I-S_{\nu})\psi_j\ra^2] 
		\\
		& \quad + \frac{\alpha_k}{1-\alpha_k}\frac{\beta_j}{1-\beta_j}\la u_{\nu}, \psi_j\ra^2 \la \phi_k, (I-S_{\nu})\phi_k\ra
		\\
		& \quad + 4\frac{\alpha_k}{1-\alpha_k}\frac{\beta_j}{1-\beta_j}\la u_{\nu}, \phi_k\ra\la u_{\nu}, \psi_j\ra\la \phi_k, (I-S_{\nu})\psi_j\ra
		\\
		& \quad + \frac{\alpha_k}{1-\alpha_k}\frac{\beta_j}{1-\beta_j}\la u_{\nu}, \phi_k\ra^2 \la \psi_j, (I-S_{\nu})\psi_j\ra
		+ \frac{\alpha_k}{1-\alpha_k}\frac{\beta_j}{1-\beta_j}\la u_{\nu}, \phi_k\ra^2\la u_{\nu}, \psi_j\ra^2
		\\
		&\quad + \frac{\alpha_k}{1-\alpha_k}\log(1-\beta_j)[\la \phi_k, (I-S_{\nu})\phi_k\ra + \la u_{\nu}, \phi_k\ra^2]
		\\
		&\quad + \frac{\beta_j}{1-\beta_j}\log(1-\alpha_k) [\la \psi_j, (I-S_{\nu})\psi_j\ra + \la u_{\nu}, \psi_j\ra^2]
		+ \log(1-\alpha_k)\log(1-\beta_j).
	\end{align*}
	Rearranging the terms in $J_{kj,1}$ gives, with $u_{\nu} = C_{*}^{-1/2}(m_{\nu} - m_{*})$,
\begin{align*}
	J_{kj,1} &= 2\frac{\alpha_k}{1-\alpha_k}\frac{\beta_j}{1-\beta_j}[\la \phi_k, (I-S_{\nu})\psi_j\ra^2] 
	\\
	& \quad + 4\frac{\alpha_k}{1-\alpha_k}\frac{\beta_j}{1-\beta_j}\la u_{\nu}, \phi_k\ra\la u_{\nu}, \psi_j\ra\la \phi_k, (I-S_{\nu})\psi_j\ra
	\\
	&\quad + \left[\frac{\alpha_k}{1-\alpha_k}(\la \phi_k, (I-S_{\nu})\phi_k\ra + \la u_{\nu}, \phi_k\ra^2) + \log(1-\alpha_k)\right]
	\\
	& \quad \times 
	\left[\frac{\beta_j}{1-\beta_j}(\la \psi_j, (I-S_{\nu})\psi_j\ra + \la u_{\nu}, \psi_j\ra^2)+ \log(1-\beta_j)\right].
\end{align*}
Equivalently,
\begin{align*}
	J_{kj,1}	& = 2\frac{\alpha_k}{1-\alpha_k}\frac{\beta_j}{1-\beta_j}[\la \phi_k, (I-S_{\nu})\psi_j\ra^2] 
	\\
	& \quad + 4\frac{\alpha_k}{1-\alpha_k}\frac{\beta_j}{1-\beta_j}\la u_{\nu}, \phi_k\ra\la u_{\nu}, \psi_j\ra\la \phi_k, (I-S_{\nu})\psi_j\ra
	\\
	&\quad + \left[- \frac{\alpha_k}{1-\alpha_k}\la \phi_k, S_{\nu}\phi_k\ra + \frac{\alpha_k}{1-\alpha_k}\la u_{\nu}, \phi_k\ra^2+  \frac{\alpha_k}{1-\alpha_k} + \log(1-\alpha_k)\right]
	\\
	& \quad \times \left[- \frac{\beta_j}{1-\beta_j}\la \psi_j, S_{\nu}\psi_j\ra + \frac{\beta_j}{1-\beta_j}\la u_{\nu}, \psi_j\ra^2+ \frac{\beta_j}{1-\beta_j} + \log(1-\beta_j)\right].
\end{align*}
Let us sum the terms in $J_{kj,1}$ separately. By Lemma \ref{lemma:HS-inner-expansion},
		\begin{align}
		&\sum_{k,j=1}^{\infty}\frac{\alpha_k}{1-\alpha_k}\frac{\beta_j}{1-\beta_j}[\la \phi_k, (I-S_{\nu})\psi_j\ra^2] 
		\nonumber
		\\
		&= \trace[(I-S_{\nu})S_1(I-S_1)^{-1}(I-S_{\nu})S_2(I-S_2)^{-1}].
		\label{equation:Jkj-sum-1-1}
		\\
		& \sum_{k=1}^{\infty}\frac{\alpha_k}{1-\alpha_k}\la \phi_k, S_{\nu}\phi_k\ra = \trace(S_{\nu}S_1(I-S_1)^{-1}),
		\label{equation:Jkj-sum-1-2}
		\\
		& \sum_{j=1}^{\infty}\frac{\beta_j}{1-\beta_j}\la \psi_j, S_{\nu}\psi_j\ra  = \trace(S_{\nu}S_2(I-S_2)^{-1}).
		\label{equation:Jkj-sum-1-3}
		\end{align}
		By Lemma \ref{lemma:inner-UV-expansion},
		\begin{align}
			&\sum_{k,j=1}^{\infty}\frac{\alpha_k}{1-\alpha_k}\frac{\beta_j}{1-\beta_j}\la u_{\nu}, \phi_k\ra\la u_{\nu}, \psi_j\ra\la \phi_k, (I-S_{\nu})\psi_j\ra
			\nonumber
			\\
			&= \la S_1(I-S_1)^{-1}u_{\nu}, (I-S_{\nu})S_2(I-S_2)^{-1}u_{\nu}\ra.
			\label{equation:Jkj-sum-2}
		\end{align}
		By Lemma \ref{lemma:quadratic-form-expansion},
		\begin{align}
			&\sum_{k=1}^{\infty}\frac{\alpha_k}{1-\alpha_k}\la u_{\nu}, \phi_k\ra^2 
			= 
			\la u_{\nu}, S_1(I-S_1)^{-1}u_{\nu}\ra,
			\label{equation:Jkj-sum-3-1}
			\\
		&	\sum_{j=1}^{\infty}\frac{\beta_j}{1-\beta_j}\la u_{\nu}, \psi_j\ra^2
		=	\la u_{\nu}, S_2(I-S_2)^{-1}u_{\nu}\ra.
		\label{equation:Jkj-sum-3-2}
		\end{align}
		By Lemma \ref{lemma:logdet-2-inverse},
		\begin{align}
	\sum_{k=1}^{\infty}\left[\frac{\alpha_k}{1-\alpha_k} + \log(1-\alpha_k)\right] = -	\log\dettwo[(I-S_1)^{-1}],
	\label{equation:Jkj-sum-4-1}
	\\
	 \sum_{j=1}^{\infty}\left[\frac{\beta_j}{1-\beta_j} + \log(1-\beta_j)\right] = -	\log\dettwo[(I-S_2)^{-1}].
	 \label{equation:Jkj-sum-4-2}
		\end{align}
		Combining
		Eqs.\eqref{equation:Jkj-sum-1-1}, \eqref{equation:Jkj-sum-1-2}, \eqref{equation:Jkj-sum-1-3}, \eqref{equation:Jkj-sum-2}, \eqref{equation:Jkj-sum-3-1}, \eqref{equation:Jkj-sum-3-2}, \eqref{equation:Jkj-sum-4-1}, \eqref{equation:Jkj-sum-4-2}, we obtain
		\begin{align}
		&\frac{1}{4}\sum_{k,j=1}^{\infty}J_{kj,1} 
		= \frac{1}{2}\trace[(I-S_{\nu})S_1(I-S_2)^{-1}(I-S_{\nu})S_1(I-S_2)^{-1}]
		\nonumber
		\\
		&\quad + \la S_1(I-S_1)^{-1}u_{\nu}, (I-S_{\nu})S_2(I-S_2)^{-1}u_{\nu}\ra
		\nonumber
		\\
		& \quad + \frac{1}{4}\left[\la u_{\nu}, S_1(I-S_1)^{-1}u_{\nu}\ra - \trace(S_{\nu}S_1(I-S_1)^{-1})-	\log\dettwo[(I-S_1)^{-1} \right]
		\nonumber
		\\
		& \quad \quad \times \left[\la u_{\nu}, S_2(I-S_2)^{-1}u_{\nu}\ra-\trace(S_{\nu}S_2(I-S_2)^{-1})-	\log\dettwo[(I-S_2)^{-1}\right].
		\label{equation:sum-Jkj-1-total}
		\end{align}
Next, we calculate $J_{kj,2}$. For each pair $k,j \in \Nbb$,
	\begin{align*}
	& J_{kj,2}=-\frac{2}{1-\beta_j}\int_{\Hcal}\left[\frac{\alpha_k}{1-\alpha_k}W^2_{\phi_k}(x)+ \log(1-\alpha_k)\right]\la u_2, \psi_j\ra W_{\psi_j}(x)d\nu(x)
	\\
	&+ \frac{4}{(1-\alpha_k)(1-\beta_j)}\int_{\Hcal}\la u_1, \phi_k\ra W_{\phi_k}(x)\la u_2, \psi_j\ra W_{\psi_j}(x)d\nu(x)
	\\
	& - \frac{2}{1-\alpha_k} \left[\frac{\beta_j}{1-\beta_j}W^2_{\psi_j}(x) + \log(1-\beta_j)\right]\la u_1, \phi_k\ra W_{\phi_k}d\nu(x)
	\\
	&= - \frac{2\alpha_k\la u_2, \psi_j\ra[\la  (I-S_{\nu})\phi_k, \phi_k\ra\la u_{\nu}, \psi_j\ra +  2\la u_{\nu}, \phi_k\ra\la (I-S_{\nu})\phi_k, \psi_j\ra + \la u_{\nu}, \phi_k\ra^2\la u_{\nu}, \psi_j\ra]}{(1-\alpha_k)(1-\beta_j)}
	\\
	& \quad - \frac{2\la u_2, \psi_j\ra \la u_{\nu}, \psi_j\ra \log(1-\alpha_k)}{1-\beta_j}
	\\
	& \quad + \frac{4\la u_1, \phi_k\ra \la u_2, \psi_j\ra [\la \phi_k, (I-S_{\nu})\psi_j\ra + \la u_{\nu}, \phi_k\ra \la u_{\nu}, \psi_j\ra]}{(1-\alpha_k)(1-\beta_j)}
	\\
	& \quad - \frac{2\beta_j\la u_1, \phi_k\ra[\la  (I-S_{\nu})\psi_j, \psi_j\ra\la u_{\nu}, \phi_k\ra +  2\la u_{\nu}, \psi_j\ra\la (I-S_{\nu})\psi_j, \phi_k\ra + \la u_{\nu}, \psi_j\ra^2\la u_{\nu}, \phi_k\ra]}{(1-\alpha_k)(1-\beta_j)}
	\\
	& \quad - \frac{2\la u_1, \phi_k\ra \la u_{\nu}, \phi_k\ra\log(1-\beta_j)}{1-\alpha_k}.	
\end{align*}
Rearranging the terms in $J_{kj,2}$ gives
\begin{align*}
	&J_{kj,2} = -2 \frac{\la u_2, \psi_j\ra\la u_{\nu}, \psi_j\ra}{1-\beta_j}\left[\frac{\alpha_k\la  (I-S_{\nu})\phi_k, \phi_k\ra}{(1-\alpha_k)} +  \log(1-\alpha_k) + \frac{\alpha_k}{1-\alpha_k}\la u_{\nu}, \phi_k\ra^2\right] 
	\\
	& \quad - \frac{4\alpha_k\la u_2, \psi_j\ra\la u_{\nu}, \phi_k\ra\la (I-S_{\nu})\phi_k, \psi_j\ra }{(1-\alpha_k)(1-\beta_j)}
	\\
	& \quad + \frac{4\la u_1, \phi_k\ra \la u_2, \psi_j\ra [\la \phi_k, (I-S_{\nu})\psi_j\ra + \la u_{\nu}, \phi_k\ra \la u_{\nu}, \psi_j\ra]}{(1-\alpha_k)(1-\beta_j)}
	\\
	& \quad - 2\frac{\la u_1, \phi_k\ra\la u_{\nu}, \phi_k\ra}{1-\alpha_k}\left[\frac{\beta_j\la  (I-S_{\nu})\psi_j, \psi_j\ra}{(1-\beta_j)} + \log(1-\beta_j) + \frac{\beta_j}{1-\beta_j }\la u_{\nu}, \psi_j\ra^2\right]
	\\
	& \quad - \frac{4\beta_j\la u_1, \phi_k\ra\la u_{\nu}, \psi_j\ra\la (I-S_{\nu})\psi_j, \phi_k\ra ]}{(1-\alpha_k)(1-\beta_j)}.
\end{align*}
Summing each of the terms in $J_{kj,2}$ individually, we get for the first term,
\begin{align}
	&\sum_{k,j=1}^{\infty}\frac{\la u_2, \psi_j\ra\la u_{\nu}, \psi_j\ra}{1-\beta_j}\left[\frac{\alpha_k\la  (I-S_{\nu})\phi_k, \phi_k\ra}{(1-\alpha_k)} +  \log(1-\alpha_k) + \frac{\alpha_k}{1-\alpha_k}\la u_{\nu}, \phi_k\ra^2\right] 
	\nonumber
	\\
	& = \la (I-S_2)^{-1}u_2,u_{\nu}\ra
	\nonumber
	\\
	& \quad \times \sum_{k=1}^{\infty}\left[-\frac{\alpha_k}{1-\alpha}_k\la S_{\nu}\phi_k, \phi_k\ra +  \frac{\alpha_k}{1-\alpha_k}+\log(1-\alpha_k) + \frac{\alpha_k}{1-\alpha_k}\la u_{\nu}, \phi_k\ra^2\right] 
	\nonumber
	\\
	& = -\la (I-S_2)^{-1}u_2,u_{\nu}\ra  [\trace(S_{\nu}S_1(I-S_1)^{-1})+\log\dettwo(I-S_1)^{-1} ]
	\nonumber
	\\
	&\quad + \la (I-S_2)^{-1}u_2,u_{\nu}\ra \la u_{\nu}, S_1(I-S_1)^{-1}u_{\nu}\ra].
	\label{equation:Jkj2-sum-1}
\end{align}
For the second term,
\begin{align}
	&\sum_{k,j=1}^{\infty}\frac{\alpha_k\la u_2, \psi_j\ra\la u_{\nu}, \phi_k\ra\la (I-S_{\nu})\phi_k, \psi_j\ra }{(1-\alpha_k)(1-\beta_j)}
	\nonumber
	\\
	& = \la S_1(I-S_1)^{-1}u_{\nu}, (I-S_{\nu})(I-S_2)^{-1}u_2\ra.
	\label{equation:Jkj2-sum-2}
\end{align}
For the third term,
\begin{align}
	&\sum_{k,j=1}^{\infty}\frac{\la u_1, \phi_k\ra \la u_2, \psi_j\ra [\la \phi_k, (I-S_{\nu})\psi_j\ra + \la u_{\nu}, \phi_k\ra \la u_{\nu}, \psi_j\ra]}{(1-\alpha_k)(1-\beta_j)}
	\nonumber
	\\
	& = \la (I-S_1)^{-1}u_1, (I-S_{\nu})(I-S_2)^{-1}u_2\ra + \la (I-S_1)^{-1}u_1,u_{\nu}\ra \la (I-S_2)^{-1}u_2,u_{\nu}\ra. 
	\label{equation:Jkj2-sum-3}
\end{align}
For the fourth term,
\begin{align}
	&\sum_{k,j=1}^{\infty}\frac{\la u_1, \phi_k\ra\la u_{\nu}, \phi_k\ra}{1-\alpha_k}\left[\frac{\beta_j\la  (I-S_{\nu})\psi_j, \psi_j\ra}{(1-\beta_j)} + \log(1-\beta_j) + \frac{\beta_j}{1-\beta_j }\la u_{\nu}, \psi_j\ra^2\right]
	\nonumber
	\\
	& = - \la (I-S_1)^{-1}u_1, u_{\nu}\ra [\trace(S_{\nu}S_2(I-S_2)^{-1}) + \log\dettwo(I-S_2)^{-1}]
	\nonumber
	\\
	& \quad + \la (I-S_1)^{-1}u_1, u_{\nu}\ra\la u_{\nu}, S_2(I-S_2)^{-1}u_{\nu}\ra.
	\label{equation:Jkj2-sum-4}
\end{align}
For the fifth term,
\begin{align}
	\label{equation:Jkj2-sum-5}
	&\sum_{k,j=1}^{\infty}\frac{\beta_j\la u_1, \phi_k\ra\la u_{\nu}, \psi_j\ra\la (I-S_{\nu})\psi_j, \phi_k\ra ]}{(1-\alpha_k)(1-\beta_j)}
	\nonumber
	\\
	& = \la S_2(I-S_2)^{-1}u_{\nu}, (I-S_{\nu})(I-S_1)^{-1}u_1\ra.
\end{align}
Combining Eqs.\eqref{equation:Jkj2-sum-1}, \eqref{equation:Jkj2-sum-2}, \eqref{equation:Jkj2-sum-3}, \eqref{equation:Jkj2-sum-4}, \eqref{equation:Jkj2-sum-5}, we obtain
\begin{align}
	&\frac{1}{4}\sum_{k,j=1}^{\infty}J_{kj,2} 
	\nonumber
	= 
	\frac{1}{2}\la (I-S_2)^{-1}u_2,u_{\nu}\ra  [\trace(S_{\nu}S_1(I-S_1)^{-1})+\log\dettwo(I-S_1)^{-1} ]
	\\
	& - \frac{1}{2}\la (I-S_2)^{-1}u_2,u_{\nu}\ra \la u_{\nu}, S_1(I-S_1)^{-1}u_{\nu}\ra
	\nonumber
	\\
	& - \la S_1(I-S_1)^{-1}u_{\nu}, (I-S_{\nu})(I-S_2)^{-1}u_2\ra
	\nonumber
	\\
	& + \la (I-S_1)^{-1}u_1, (I-S_{\nu})(I-S_2)^{-1}u_2\ra + \la (I-S_1)^{-1}u_1,u_{\nu}\ra \la (I-S_2)^{-1}u_2,u_{\nu}\ra
	\nonumber
	\\
	&+\frac{1}{2}\la (I-S_1)^{-1}u_1, u_{\nu}\ra [\trace(S_{\nu}S_2(I-S_2)^{-1}) + \log\dettwo(I-S_2)^{-1}]
	\nonumber
	\\
	&-\frac{1}{2}\la (I-S_1)^{-1}u_1, u_{\nu}\ra\la u_{\nu}, S_2(I-S_2)^{-1}u_{\nu}\ra
	\nonumber
	\\
	&-\la S_2(I-S_2)^{-1}u_{\nu}, (I-S_{\nu})(I-S_1)^{-1}u_1\ra.
	\label{equation:sum-Jkj-2-total}
\end{align}
Combining Eqs.\eqref{equation:sum-Jkj-1-total} and \eqref{equation:sum-Jkj-2-total}, we obtain
\begin{align*}
&J = 
\left\la \log\left\{{\frac{d\mu_1}{d\mu_{*}}(x)}\right\}, \log\left\{{\frac{d\mu_2}{d\mu_{*}}(x)}\right\}\right\ra_{\Lcal^2(\Hcal,\nu)}
\\
&=
\frac{1}{4}\sum_{k,j=1}^{\infty}J_{kj} = \frac{1}{4}\sum_{k,j=1}^{\infty}J_{kj,1} + \frac{1}{4}\sum_{k,j=1}^{\infty}J_{kj,2}
\\
&
= \frac{1}{2}\trace[(I-S_{\nu})S_1(I-S_1)^{-1}(I-S_{\nu})S_2(I-S_2)^{-1}]
\nonumber
\\
& \quad +\left\la (I-S_{\nu})^{1/2}(I-S_1)^{-1}u_1 - (I-S_{\nu})^{1/2}S_1(I-S_1)^{-1}u_{\nu}, \right.
\\
&\quad \quad \quad \left.(I-S_{\nu})^{1/2}(I-S_2)^{-1}u_2 - (I-S_{\nu})^{1/2}S_2(I-S_2)^{-1}u_{\nu}\right\ra
\\
& \quad + \frac{1}{4}\left[\la u_{\nu}, S_1(I-S_1)^{-1}u_{\nu}\ra-2\la (I-S_1)^{-1}u_1, u_{\nu}\ra \right.
\\
&\quad \quad \quad \left. -  \trace(S_{\nu}S_1(I-S_1)^{-1})-	\log\dettwo[(I-S_1)^{-1} \right]
\nonumber
\\
& \quad \quad \times \left[\la u_{\nu}, S_2(I-S_2)^{-1}u_{\nu}\ra-2\la (I-S_2)^{-1}u_2, u_{\nu}\ra \right.
\\
&\quad \quad \quad \left. - \trace(S_{\nu}S_2(I-S_2)^{-1})-	\log\dettwo[(I-S_2)^{-1}\right].
\end{align*}
In particular, for $S_1 = S_2 = S \in \SymHS(\Hcal)_{< I}$, $m_1 = m_2 = m$, $\mu_1 = \mu_2 = \mu= \Ncal(m,C)$, with $C= C_{*}^{1/2}(I-S)C_{*}^{1/2}$, $u = C_{*}^{-1/2}(m- m_{*})$,
\begin{align*}
	&\left\| \log\left\{{\frac{d\mu}{d\mu_{*}}(x)}\right\}\right\|^2_{\Lcal^2(\Hcal,\nu)}
	\\
	&
	= \frac{1}{2}||(I-S_{\nu})^{1/2}S(I-S)^{-1}(I-S_{\nu})^{1/2}||^2_{\HS}
	\nonumber
	\\
	& \quad +||(I-S_{\nu})^{1/2}(I-S)^{-1}u - (I-S_{\nu})^{1/2}S(I-S)^{-1}u_{\nu}||^2
	\\
	& \quad + \frac{1}{4}\left[\la u_{\nu}, S(I-S)^{-1}u_{\nu}\ra-2\la (I-S)^{-1}u, u_{\nu}\ra \right.
	\\
	&\quad \quad \quad \left. -  \trace(S_{\nu}S(I-S)^{-1})-	\log\dettwo[(I-S)^{-1} \right]^2.
\end{align*}
It follows then that		
	\begin{align*}
		&K = \left\| \log\left\{{\frac{d\mu_1}{d\mu_{*}}(x)}\right\}- \log\left\{{\frac{d\mu_2}{d\mu_{*}}(x)}\right\}\right\|^2_{\Lcal^2(\Hcal,\nu)}
		\\
		&= \frac{1}{2}||(I-S_{\nu})^{1/2}[S_1(I-S_1)^{-1}-S_2(I-S_2)^{-1}](I-S_{\nu})^{1/2}||^2_{\HS}
		\\
		& \quad + ||(I-S_{\nu})^{1/2}[(I-S_1)^{-1}u_1 - (I-S_2)^{-1}u_2] 
		\\
		&\quad \quad -(I-S_{\nu})^{1/2}[S_1(I-S_1)^{-1}-S_2(I-S_2)^{-1}]u_{\nu}||^2
		\\
		& \quad + \frac{1}{4}\left(\log\dettwo(I-S_1)^{-1}- \log\dettwo(I-S_2)^{-1} \right.
		\\
		&\quad \quad+ \trace[S_{\nu}[S_1(I-S_1)^{-1}-S_2(I-S_2)^{-1}]]
		\\
		&\quad \quad - \la u_{\nu}, [S_1(I-S_1)-S_2(I-S_2)^{-1}]u_{\nu}\ra 
		\\
		& \quad \quad \left.+ 2 \la (I-S_1)^{-1}u_1 - (I-S_2)^{-1}u_2, u_{\nu}\ra\right)^2.
	\end{align*}
Using the Cauchy-Schwarz inequality, we obtain
	\begin{align*}
		K&   \leq \frac{1}{2}||(I-S_{\nu})^{1/2}[S_1(I-S_1)^{-1}-S_2(I-S_2)^{-1}](I-S_{\nu})^{1/2}||^2_{\HS}
		\\
		&\quad + 2||(I-S_{\nu})^{1/2}[(I-S_1)^{-1}u_1 - (I-S_2)^{-1}u_2]||^2
		\\
		& \quad + 2||(I-S_{\nu})^{1/2}[S_1(I-S_1)^{-1}-S_2(I-S_2)^{-1}]u_{\nu}||^2
		\\
		&\quad + (\log\dettwo(I-S_1)^{-1}- \log\dettwo(I-S_2)^{-1})^2 
		\\
		&\quad +(\trace[S_{\nu}[S_1(I-S_1)^{-1}-S_2(I-S_2)^{-1}]])^2
		\\
		&\quad + \la u_{\nu}, [S_1(I-S_1)-S_2(I-S_2)^{-1}]u_{\nu}\ra^2
		\\
		& \quad + 4 \la (I-S_1)^{-1}u_1 - (I-S_2)^{-1}u_2, u_{\nu}\ra^2.
	\end{align*}
	Using $S_{1}(I-S_1)^{-1} - S_2(I-S_2)^{-1} = (I-S_1)^{-1}(S_1-S_2)(I-S_2)^{-1}$, we obtain
	\begin{align}
		&||(I-S_{\nu})^{1/2}[S_1(I-S_1)^{-1} - S_2(I-S_2)^{-1}](I-S_{\nu})^{1/2}||_{\HS}
		\nonumber
		\\
			&\leq ||I-S_{\nu}||\;||(I-S_1)^{-1}||\;||(I-S_2)^{-1}||\;||S_1-S_2||_{\HS},
		\label{equation:K-upperbound-1}
		\\ 
		&||(I-S_{\nu})^{1/2}[S_1(I-S_1)^{-1}-S_2(I-S_2)^{-1}]u_{\nu}||
		\nonumber
		\\
		& \leq ||(I-S_{\nu})^{1/2}||(I-S_1)^{-1}||\;||(I-S_2)^{-1}||\;||S_1-S_2||_{\HS}\;||u_{\nu}||,
		\label{equation:K-upperbound-2}
		\\
		&|\trace[S_{\nu}[S_1(I-S_1)^{-1}-S_2(I-S_2)^{-1}]| = |\la S_{\nu}, S_1(I-S_1)^{-1}-S_2(I-S_2)^{-1}\ra_{\HS}|
		\nonumber
		\\
		& \leq ||S_{\nu}||_{\HS}||(I-S_1)^{-1}||\;||(I-S_2)^{-1}||\;||S_1-S_2||_{\HS},
		\label{equation:K-upperbound-3}
		\\
		&|\la u_{\nu}, [S_1(I-S_1)-S_2(I-S_2)^{-1}]u_{\nu}\ra|
		\nonumber
		\\
		& \leq ||(I-S_1)^{-1}||\;||(I-S_2)^{-1}||\;||S_1-S_2||_{\HS}\;||u_{\nu}||^2.
		\label{equation:K-upperbound-4}
	\end{align}
	By Theorem 12 in \cite{Minh:2022KullbackGaussian},
	\begin{align}
		&|\log\dettwo[(I-S_1)^{-1}] - \log\dettwo[(I-S_2)^{-1}]| 	
		\label{equation:K-upperbound-5}
		\\
		&\leq ||(I-S_1)^{-1}||\;||(I-S_2)^{-1}||[||S_1||_{\HS} + ||S_2||_{\HS} + ||S_1S_2||_{\HS}]||S_1-S_2||_{\HS}.
	\nonumber
	\end{align}
	Write $(I-S_1)^{-1} - (I-S_2)^{-1} = (I-S_1)^{-1}(S_1 - S_2)(I-S_2)^{-1}$, we obtain
\begin{align*}
&||(I-S_1)^{-1}u_1 - (I-S_2)^{-1}u_2||
\\
&=||[(I-S_1)^{-1}-(I-S_2)^{-1}]u_1 + (I-S_2)^{-1}(u_1 - u_2)||
\\
& \leq ||[(I-S_1)^{-1}-(I-S_2)^{-1}]||\;||u_1|| + ||(I-S_2)^{-1}||\;||u_1-u_2||
\\
& \leq ||(I-S_1)^{-1}||\;||(I-S_2)^{-1}||\;||S_1-S_2||\;||u_1|| + ||(I-S_2)^{-1}||\;||u_1-u_2||.
\end{align*}
It follows that
\begin{align}
&||(I-S_{\nu})^{1/2}[(I-S_1)^{-1}u_1 - (I-S_2)^{-1}u_2]||^2
\nonumber
\\
&\leq  2||I-S_{\nu}||\;||(I-S_1)^{-1}||^2||(I-S_2)^{-1}||^2||S_1-S_2||^2||u_1||^2
\nonumber
\\
& \quad + 2||I-S_{\nu}||\;||(I-S_2)^{-1}||^2||u_1-u_2||^2,
\label{equation:K-upperbound-6}
\\
&\la (I-S_1)^{-1}u_1 - (I-S_2)^{-1}u_2, u_{\nu}\ra^2 \leq
|| (I-S_1)^{-1}u_1 - (I-S_2)^{-1}u_2||^2||u||^2_{\nu} 
\nonumber
\\
&\leq 
2||(I-S_1)^{-1}||^2||(I-S_2)^{-1}||^2||S_1-S_2||^2||u_1||^2||u_{\nu}||^2
\nonumber
\\
& +2||(I-S_2)^{-1}||^2||u_1-u_2||^2||u_{\nu}||^2.
\label{equation:K-upperbound-7}
\end{align}
Combining Eqs.\eqref{equation:K-upperbound-1},\eqref{equation:K-upperbound-2},\eqref{equation:K-upperbound-3},\eqref{equation:K-upperbound-4},\eqref{equation:K-upperbound-5},\eqref{equation:K-upperbound-6},\eqref{equation:K-upperbound-7}, we obtain
	\begin{align*}
		K &= \left\| \log\left\{{\frac{d\mu_1}{d\mu_{*}}(x)}\right\}- \log\left\{{\frac{d\mu_2}{d\mu_{*}}(x)}\right\}\right\|^2_{\Lcal^2(\Hcal,\nu)}
		\\
		&\leq \left(\frac{1}{2}||I-S_{\nu}||^2 + 2||I-S_{\nu}||\;||u_{\nu}||^2 + [||S_1||_{\HS} + ||S_2||_{\HS} + ||S_1S_2||_{\HS}]^2\right. 
		\\
		&\quad \quad \left.+ ||S_{\nu}||_{\HS}^2 + ||u_{\nu}||^4 + 8||u_1||^2||u_{\nu}||^2 + 4||u_1||^2\right)
		\\
		& \quad \quad \times ||(I-S_1)^{-1}||^2||(I-S_2)^{-1}||^2||S_1-S_2||^2_{\HS}
		\\
		& \quad +[4||I-S_{\nu}||+8||u_{\nu}||^2] ||(I-S_2)^{-1}||^2||u_1 - u_2||^2.
	\end{align*}
	This completes the proof.
	\qed
\end{proof}

\subsection{Proof for the geometric interpolation between two equivalent Gaussian measures}
\label{section:proof-geometic-interpolation-Gaussian}

In the following, we consider two cases. First, we prove Theorem \ref{theorem:geometric-interpolation-equivalent-Gaussian-Hilbert-space}  for the case $S_i \in \SymTr(\Hcal)_{< I}$, $i=0,1$.
Then we proceed to the general case $S_i \in \SymHS(\Hcal)_{< I}$, $i=0,1$, via a limiting process.
We need the following technical results. The first is a result on Gaussian integrals on $\Hcal$ from \cite{DaPrato:PDEHilbert}.

\begin{theorem}
	[\cite{DaPrato:PDEHilbert}, Proposition 1.2.8]
	\label{theorem:exponential-quadratic-Gaussian}
	Let $\Ncal(0,Q) \in \Gauss(\Hcal)$.
	Assume that $M$ is a self-adjoint operator on $\Hcal$ such that $\la Q^{1/2}MQ^{1/2} x, x \ra < ||x||^2$ $\forall 
	x \in \Hcal, x \neq 0$. Let $b \in \Hcal$. Then 
	\begin{align}
		&\int_{\Hcal}\exp\left(\frac{1}{2}\la M y, y\ra + \la b,y\ra\right)\Ncal(0,Q)(dy)
		\\
		& = [\det(I - Q^{1/2}MQ^{1/2})]^{-1/2}\exp\left(\frac{1}{2}||(I-Q^{1/2}MQ^{1/2})^{-1/2}Q^{1/2}b||^2\right).
		\nonumber
	\end{align}
\end{theorem}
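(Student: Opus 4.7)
My proof plan is to reduce to finite dimensions by spectral truncation of $Q$, evaluate the resulting finite-dimensional Gaussian integral by completing the square, and then pass to the limit.

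First, I would diagonalize $Q = \sum_{k=1}^{\infty}\lambda_k e_k\otimes e_k$ via the spectral theorem; without loss of generality $\ker(Q) = \{0\}$, since otherwise $\Ncal(0,Q)$ is supported on $\overline{\range(Q^{1/2})}$ and one restricts to that subspace. Let $P_N = \sum_{k=1}^{N}e_k \otimes e_k$ and set $Q_N = P_N Q P_N$, $M_N = P_N M P_N$, $b_N = P_N b$. Since $P_N$ and $Q^{1/2}$ commute in this basis, $Q_N^{1/2}M_N Q_N^{1/2} = P_N Q^{1/2}MQ^{1/2}P_N$, and the pushforward of $\Ncal(0,Q)$ under $P_N$ is the Gaussian $\Ncal(0,Q_N)$ on the $N$-dimensional subspace $P_N\Hcal \cong \R^N$, with strictly positive covariance.

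Next, in finite dimensions the combined exponent becomes $-\frac{1}{2}\la A_N y, y\ra + \la b_N, y\ra$ with $A_N := Q_N^{-1} - M_N$, and the hypothesis $\la Q^{1/2}MQ^{1/2}x, x\ra < ||x||^2$ forces $A_N > 0$ on $P_N\Hcal$. Completing the square around $A_N^{-1}b_N$ and evaluating the resulting Gaussian integral yields
$$[\det(I - Q_N^{1/2}M_N Q_N^{1/2})]^{-1/2}\exp\left(\frac{1}{2}||(I - Q_N^{1/2}M_N Q_N^{1/2})^{-1/2}Q_N^{1/2}b_N||^2\right),$$
via the similarity identity $\det(I - Q_N M_N) = \det(I - Q_N^{1/2}M_N Q_N^{1/2})$ and the explicit form $A_N^{-1} = Q_N^{1/2}(I - Q_N^{1/2}M_N Q_N^{1/2})^{-1}Q_N^{1/2}$.

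Finally, I would pass to the limit $N \approach \infty$. Since $Q \in \Tr(\Hcal)$ gives $Q^{1/2} \in \HS(\Hcal)$, the operator $Q^{1/2}MQ^{1/2}$ is trace class, so the Fredholm determinant on the right-hand side is well-defined, and $\det(I - Q_N^{1/2}M_N Q_N^{1/2}) \approach \det(I - Q^{1/2}MQ^{1/2})$ by continuity of $\det$ in the trace norm; a parallel norm-convergence of $(I - Q_N^{1/2}M_N Q_N^{1/2})^{-1/2}Q_N^{1/2}b_N$ to $(I - Q^{1/2}MQ^{1/2})^{-1/2}Q^{1/2}b$ handles the exponential prefactor. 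For the left-hand integral, dominated convergence applies: since $Q^{1/2}MQ^{1/2}$ is compact self-adjoint with spectrum accumulating only at $0$, the pointwise strict hypothesis upgrades to a uniform operator bound $\sup_k \mu_k < 1$, which provides an integrable envelope for the integrand via the standard Gaussian moment generating formula. The main obstacle is precisely this dominated-convergence step: one must upgrade the pointwise strict inequality into a uniform spectral gap and then construct an explicit integrable dominating function controlling the $N$-dependent perturbations uniformly.
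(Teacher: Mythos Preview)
The paper does not give its own proof of this statement: it is quoted verbatim as Proposition~1.2.8 of Da~Prato's book and used as a black box. So there is no in-paper argument to compare against directly.

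Your proposal is essentially the standard spectral-truncation proof, and in fact the paper carries out precisely this strategy---finite-dimensional evaluation via Theorem~\ref{theorem:exponential-quadratic-Gaussian}, then passage to the limit---in the proof of the closely related Proposition~\ref{proposition:exponetial-quadratic-whitenoise-Gaussian}. Your finite-dimensional computation and the right-hand-side limits are correct; the compactness argument that upgrades the strict pointwise hypothesis to a uniform spectral gap $\sup_k\mu_k<1$ is exactly right and is the crucial observation.

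The one place where your plan is slightly miscalibrated is the appeal to dominated convergence. A pointwise integrable majorant $g(y)\geq f_N(y)$ for all $N$ is not obviously available: the quadratic form $\la M_N y,y\ra=\la M P_N y,P_N y\ra$ need not be dominated by $\la My,y\ra$ or by any single $N$-independent expression that integrates against $\Ncal(0,Q)$. What does work cleanly---and what the paper actually uses in the proof of Proposition~\ref{proposition:exponetial-quadratic-whitenoise-Gaussian}---is Vitali's theorem via uniform integrability: pick $p>1$ with $p\sup_k\mu_k<1$, apply your own finite-dimensional formula to $f_N^p$, and observe that $\int f_N^p\,d\Ncal(0,Q)$ converges (hence is bounded) as $N\to\infty$. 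That gives uniform integrability of $\{f_N\}$, and Vitali plus $\Lcal^2$-convergence of the exponent (hence a.e.\ convergence along a subsequence) finishes the job. This is a refinement of your final paragraph rather than a different approach.
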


The following result is standard, for the proof see e.g. \cite{Minh:2021RiemannianEstimation}. 
\begin{lemma}
\label{lemma:trace-class-orthogonal-projection}
Let $A \in \Tr(\Hcal)$. Let $\{e_k\}_{k \in \Nbb}$ be any
orthonormal basis in $\Hcal$. Let $N \in \Nbb$. Consider the orthogonal projection
$P_N = \sum_{k=1}^Ne_k \otimes e_k$. Then
\begin{align}
\lim_{N \approach \infty}||P_NAP_N-A||_{\trace} = \lim_{N \approach \infty}||P_NA-A||_{\tr} = \lim_{N\approach \infty}||AP_N - A||_{\tr} = 0.
\end{align}
\end{lemma}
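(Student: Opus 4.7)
The plan is to prove the lemma by approximating $A$ in trace norm by finite rank operators and then exploiting strong convergence $P_N \to I$ on vectors. I will focus on the statement $\|P_N A P_N - A\|_{\tr} \to 0$, since the one-sided versions $\|P_N A - A\|_{\tr}$ and $\|A P_N - A\|_{\tr}$ follow from identical reasoning (or directly from the two-sided statement, replacing one copy of $P_N$ by $I$ throughout).

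First I would recall the two basic inputs. (i) Since $\|P_N\| \leq 1$, for any $B \in \Tr(\Hcal)$ one has $\|P_N B P_N\|_{\tr} \leq \|B\|_{\tr}$; this follows from the ideal property of $\Tr(\Hcal)$. (ii) The finite rank operators are dense in $\Tr(\Hcal)$: given $A \in \Tr(\Hcal)$, its Schmidt (SVD) expansion $A = \sum_{k=1}^{\infty} \sigma_k (v_k \otimes u_k)$ with $\sigma_k \geq 0$ and $\sum_k \sigma_k = \|A\|_{\tr} < \infty$ allows us, for any $\varepsilon > 0$, to choose $M \in \Nbb$ such that $A_M = \sum_{k=1}^M \sigma_k (v_k \otimes u_k)$ satisfies $\|A - A_M\|_{\tr} \leq \sum_{k > M} \sigma_k < \varepsilon/3$.

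Next, using the decomposition
\begin{align*}
P_N A P_N - A = P_N(A - A_M)P_N - (A - A_M) + (P_N A_M P_N - A_M),
\end{align*}
the triangle inequality and (i) bound the first two terms by $2\|A - A_M\|_{\tr} < 2\varepsilon/3$, uniformly in $N$. It remains to show that for the fixed finite rank operator $A_M$, we have $\|P_N A_M P_N - A_M\|_{\tr} \to 0$ as $N \to \infty$. Expanding,
\begin{align*}
P_N A_M P_N - A_M = \sum_{k=1}^{M} \sigma_k \bigl[(P_N v_k) \otimes (P_N u_k) - v_k \otimes u_k\bigr],
\end{align*}
and for each $k$ the rank-one trace norm estimate $\|a \otimes b\|_{\tr} = \|a\|\|b\|$ together with $\|P_N v_k - v_k\| \to 0$, $\|P_N u_k - u_k\| \to 0$ (strong convergence of $P_N$ to $I$, which holds because $\{e_k\}$ is an orthonormal basis) yields $\|(P_N v_k) \otimes (P_N u_k) - v_k \otimes u_k\|_{\tr} \to 0$. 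Since the sum has finitely many terms, $\|P_N A_M P_N - A_M\|_{\tr} < \varepsilon/3$ for $N$ large, giving $\|P_N A P_N - A\|_{\tr} < \varepsilon$. As $\varepsilon$ was arbitrary, the limit is zero.

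I do not expect any significant obstacle: the argument is the standard ``approximate by finite rank, then use strong convergence'' trick in the trace ideal. The only subtle point is remembering that one must work in trace norm throughout, so the ideal inequality $\|P_N B P_N\|_{\tr} \leq \|B\|_{\tr}$ (rather than any operator-norm reasoning, which would be insufficient since $\|P_N - I\| = 1$ does \emph{not} tend to zero) is essential to control the tail $A - A_M$ uniformly in $N$.
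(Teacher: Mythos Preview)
Your argument is correct and is exactly the standard ``approximate by finite rank, then use strong convergence of $P_N\to I$'' proof. The paper itself does not supply a proof of this lemma at all: it simply records the statement as standard and refers to \cite{Minh:2021RiemannianEstimation}, so there is no paper-specific approach to compare against.
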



\begin{proposition}
	[\cite{Minh2024:FisherRaoGaussian}, Proposition 8]
	\label{proposition:RN-S-trace-class-quadratic-form-convergence}
	Let $\mu_0 = \Ncal(m_0, C_0)$ be fixed, with the associated white noise mapping $W:\Hcal \mapto \Lcal^2(\Hcal,\mu_0)$.
	Assume that $T \in \Sym(\Hcal) \cap \Tr(\Hcal)$, with eigenvalues $\{\alpha_k\}_{k \in \Nbb}$ and
	corresponding orthonormal eigenvectors $\{\phi_k\}_{k \in \Nbb}$. 	Let $P_N = \sum_{j=1}^Ne_j \otimes e_j$, with $\{e_j\}_{j\in \Nbb}$ being the orthonormal eigenvectors of $C_0$. Then in the $\Lcal^2(\Hcal,\mu_0)$ sense,
	\begin{align}
		&\sum_{k=1}^{\infty}{\alpha_k}W^2_{\phi_k}(x)
		=\lim_{N \approach \infty}\sum_{k=1}^{\infty}{\alpha_k}W^2_{P_N\phi_k}
		\\
		& = \lim_{N \approach \infty}\la C_0^{-1/2}P_N(x-m_0), TC_0^{-1/2}P_N(x-m_0)\ra
		\\
		&\doteq\la C_0^{-1/2}(x-m_0), TC_0^{-1/2}(x-m_0)\ra.
	\end{align}
\end{proposition}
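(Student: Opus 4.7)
The plan is to split the proposition into three claims: (i) the series $f := \sum_{k=1}^{\infty} \alpha_k W_{\phi_k}^2$ converges in $\Lcal^2(\Hcal,\mu_0)$; (ii) for each fixed $N$, the identity $\sum_{k=1}^{\infty} \alpha_k W_{P_N\phi_k}^2(x) = \la C_0^{-1/2}P_N(x-m_0),\, T\, C_0^{-1/2}P_N(x-m_0)\ra$ holds; and (iii) the sequence $f_N := \sum_k \alpha_k W_{P_N\phi_k}^2$ converges to $f$ in $\Lcal^2(\Hcal,\mu_0)$. The final $\doteq$ is then simply the definition of the notation on the right-hand side.

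For (i), I would exploit that under $\mu_0$ the family $\{W_{\phi_k}\}_{k\in\Nbb}$ consists of i.i.d.\ $\Ncal(0,1)$ random variables, since they are jointly Gaussian with $\la W_{\phi_k}, W_{\phi_j}\ra_{\Lcal^2(\Hcal,\mu_0)} = \la \phi_k, \phi_j\ra = \delta_{kj}$. Isserlis's (Wick's) formula then gives $\int W_{\phi_k}^2 W_{\phi_j}^2\,d\mu_0 = 1 + 2\delta_{kj}$, so $\bigl\|\sum_{k=M+1}^{M'}\alpha_k W_{\phi_k}^2\bigr\|_{\Lcal^2(\Hcal,\mu_0)}^2 = \bigl(\sum_{k=M+1}^{M'}\alpha_k\bigr)^2 + 2\sum_{k=M+1}^{M'}\alpha_k^2$, which is Cauchy in $M,M'$ because $T\in\Tr(\Hcal)$ forces both $\sum_k|\alpha_k|<\infty$ and $\sum_k\alpha_k^2<\infty$. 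For (ii), since $P_N\phi_k \in \mathrm{span}\{e_1,\ldots,e_N\}\subset\range(C_0^{1/2})$, the white noise mapping reduces to its original defining formula $W_{P_N\phi_k}(x) = \la x-m_0,\, C_0^{-1/2}P_N\phi_k\ra$; the operator $C_0^{-1/2}P_N = \sum_{j=1}^N \lambda_j^{-1/2}\, e_j\otimes e_j$ is bounded and self-adjoint, so this equals $\la C_0^{-1/2}P_N(x-m_0),\, \phi_k\ra$. Substituting the spectral decomposition $T = \sum_k \alpha_k \phi_k\otimes\phi_k$ then yields the asserted quadratic form.

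For (iii), a second application of Isserlis reduces the $\Lcal^2$ distance to a trace expression, $\|f - f_N\|_{\Lcal^2(\Hcal,\mu_0)}^2 = [\trace(T) - \trace(P_N T)]^2 + 2[\trace(T^2) - \trace(P_N T P_N T)]$. The first bracket is bounded by $\|T - P_N T\|_{\trace}^2$, which tends to zero by Lemma \ref{lemma:trace-class-orthogonal-projection}. For the second, I would use the telescoping $T^2 - P_N T P_N T = (I-P_N)T\cdot T + P_N T(I-P_N)T$ together with $|\trace(AB)| \leq \|A\|_{\trace}\|B\|$ to obtain bounds by $\|T - P_N T\|_{\trace}\|T\|$ and $\|T - T P_N\|_{\trace}\|T\|$, both vanishing by the same lemma. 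The main obstacle is really just careful bookkeeping: executing the Wick moment computations precisely and arranging the telescoping so that each summand is controlled by a one-sided trace-norm difference; once that is done, the trace-class convergence in Lemma \ref{lemma:trace-class-orthogonal-projection} drives the conclusion immediately.
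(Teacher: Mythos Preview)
The paper does not prove this proposition; it is quoted verbatim from \cite{Minh2024:FisherRaoGaussian} (Proposition 8) and used only as a tool, so there is no in-paper argument to compare against. Your proof is correct and self-contained: the Isserlis identity $E[W_a^2 W_b^2] = \|a\|^2\|b\|^2 + 2\la a,b\ra^2$ gives exactly the trace formula you claim in (iii), namely $\|f-f_N\|^2_{\Lcal^2(\Hcal,\mu_0)} = [\trace(T)-\trace(P_NT)]^2 + 2[\trace(T^2)-\trace(P_NTP_NT)]$, and the telescoping $T^2-P_NTP_NT = (I-P_N)T\cdot T + P_NT\cdot(I-P_N)T$ together with $|\trace(XY)|\leq\|X\|\,\|Y\|_{\tr}$ and Lemma~\ref{lemma:trace-class-orthogonal-projection} finishes. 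Step (ii) is likewise sound since $C_0^{-1/2}P_N$ is bounded self-adjoint (it commutes with $C_0$), so the pointwise identity follows from the spectral expansion of $T$. Nothing further is needed.
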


We now prove the following result, whose statement is well-defined by the limits in Proposition \ref{proposition:RN-S-trace-class-quadratic-form-convergence}. It is a generalization of Proposition 6 in \cite{Minh:2020regularizedDiv}.

\begin{proposition}
	\label{proposition:exponetial-quadratic-whitenoise-Gaussian}
	Let $\mu = \Ncal(m,C) \in \Gauss(\Hcal)$, $C \in \Sym^{++}(\Hcal) \cap \Tr(\Hcal)$.
	Assume that $A \in \SymTr(\Hcal)_{< I}$, $b \in \Hcal$. Consider
	the orthogonal projection $P_N = \sum_{k=1}^N e_k \otimes e_k$, $N \in \Nbb$, where $\{e_k\}_{k \in \Nbb}$
	are the orthonormal eigenvectors of $C$.
	Define
	\begin{align}
	&\la C^{-1/2}(x-m), AC^{-1/2}(x-m)\ra + \la C^{-1/2}(x-m),b\ra 
	\\
	&\doteq \lim_{N \approach \infty}\la C^{-1/2}P_N(x-m), AC^{-1/2}P_N(x-m)\ra + \la C^{-1/2}P_N(x-m),b\ra,
	\nonumber
	\end{align}
	where the limit is in the $\Lcal^2(\Hcal,\mu)$ sense.
	Then
	\begin{align}
		&\int_{\Hcal}\exp\left(\frac{1}{2}\la C^{-1/2}(x-m), AC^{-1/2}(x-m)\ra + \la C^{-1/2}(x-m),b\ra\right)\Ncal(m,C)(dx)
		\nonumber
		\\
		& = [\det(I- A)]^{-1/2}\exp\left(\frac{1}{2}||(I-A)^{-1/2}b||^2\right).	\label{equation:exponetial-quadratic-whitenoise-Gaussian}
	\end{align}
\end{proposition}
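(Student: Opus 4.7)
The plan is to reduce to Theorem~\ref{theorem:exponential-quadratic-Gaussian} via a finite-rank approximation followed by a limiting argument. By translating $y = x - m$, I would reduce immediately to the case $m = 0$, since the right-hand side of~\eqref{equation:exponetial-quadratic-whitenoise-Gaussian} does not involve $m$. Let $\{\lambda_k, e_k\}_{k \in \Nbb}$ be the eigensystem of $C$ and $P_N = \sum_{k=1}^N e_k \otimes e_k$. Define the bounded finite-rank operator $M_N = P_N C^{-1/2} A C^{-1/2} P_N \in \Sym(\Hcal)$ and the vector $b_N = C^{-1/2} P_N b \in \Hcal$. A direct computation gives $\la M_N y, y\ra = \la C^{-1/2}P_N y, A C^{-1/2}P_N y\ra$ and $\la b_N, y\ra = \la C^{-1/2}P_N y, b\ra$, while the commutation $P_N C^{\pm 1/2} = C^{\pm 1/2} P_N$ (on the appropriate domains) yields the crucial identities $C^{1/2} M_N C^{1/2} = P_N A P_N$ and $C^{1/2} b_N = P_N b$.

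Since $A$ is compact with $I - A > 0$ strictly, $\lambda_{\max}(A) < 1$, so $P_N A P_N < I$ strictly, and Theorem~\ref{theorem:exponential-quadratic-Gaussian} applies to give
\begin{align*}
I_N &:= \int_{\Hcal} \exp\!\left(\tfrac{1}{2}\la M_N y, y\ra + \la b_N, y\ra\right)\Ncal(0,C)(dy) \\
&= [\det(I - P_N A P_N)]^{-1/2} \exp\!\left(\tfrac{1}{2}\|(I - P_N A P_N)^{-1/2} P_N b\|^2\right).
\end{align*}
Because $A \in \Tr(\Hcal)$, Lemma~\ref{lemma:trace-class-orthogonal-projection} gives $P_N A P_N \to A$ in trace norm. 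Continuity of the Fredholm determinant on $\Tr(\Hcal)$ then yields $\det(I - P_N A P_N) \to \det(I - A)$; continuity of inversion and of the square root on strictly positive operators gives $(I - P_N A P_N)^{-1/2} \to (I - A)^{-1/2}$ in operator norm; and $P_N b \to b$ in $\Hcal$. Combining these gives $I_N \to I_{\infty} := [\det(I-A)]^{-1/2}\exp(\tfrac{1}{2}\|(I-A)^{-1/2}b\|^2)$, which is the claimed right-hand side.

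For the left-hand side, set $f_N(y) = \tfrac{1}{2}\la M_N y, y\ra + \la b_N, y\ra$. Proposition~\ref{proposition:RN-S-trace-class-quadratic-form-convergence} delivers $\Lcal^2(\Hcal, \Ncal(0,C))$-convergence of the quadratic part; the linear part $\la b_N, \cdot\ra = W_{P_N b}$ converges in $\Lcal^2(\Hcal, \Ncal(0,C))$ to $W_b$ by isometry of the white noise mapping, since $P_N b \to b$ in $\Hcal$. Thus $f_N \to f$ in $\Lcal^2(\Hcal, \Ncal(0,C))$, where $f$ is the argument of the exponential defined in the statement of the proposition.

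The main obstacle is upgrading $\Lcal^2$-convergence of $f_N$ to $\Lcal^1$-convergence of $\exp(f_N)$, i.e., justifying $\int \exp(f_N) d\Ncal(0,C) \to \int \exp(f) d\Ncal(0,C)$. I would handle this via uniform integrability: choose $p > 1$ with $p \lambda_{\max}(A) < 1$, possible since $\lambda_{\max}(A) < 1$. Reapplying the same finite-$N$ identity with $pA$ and $pb$ in place of $A$ and $b$ shows that $\int \exp(p f_N) d\Ncal(0,C)$ converges as $N \to \infty$, and is therefore uniformly bounded in $N$. This yields uniform integrability of $\{\exp(f_N)\}$. Combined with convergence in measure of $\exp(f_N) \to \exp(f)$ (which follows from $\Lcal^2$-convergence of $f_N$ and continuity of $\exp$), this gives $\exp(f_N) \to \exp(f)$ in $\Lcal^1(\Hcal, \Ncal(0,C))$. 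Equating $\lim_N I_N$ with $\int \exp(f) d\Ncal(0,C)$ then completes the proof.
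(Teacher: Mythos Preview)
Your proposal is correct and follows essentially the same approach as the paper: reduce to $m=0$, apply Theorem~\ref{theorem:exponential-quadratic-Gaussian} at level $N$ via $C^{1/2}M_NC^{1/2}=P_NAP_N$, pass to the limit on the closed-form side using $\|P_NAP_N-A\|_{\tr}\to 0$, and justify the limit on the integral side by an $L^p$ bound with $p>1$ chosen so that $I-pA>0$, yielding uniform integrability. The only cosmetic difference is that the paper extracts an a.e.\ convergent subsequence before invoking Vitali, whereas you appeal directly to the convergence-in-measure version; both are standard and equivalent here.
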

{\bf Special cases}. Let $A = c_1 g \otimes g$, $b = c_2g$, where $c_1,c_2 \in \R$, $g \in \Hcal$, with $c_1||g||^2 < 1$. Then Eq.\eqref{equation:exponetial-quadratic-whitenoise-Gaussian} gives
	\begin{align}
	&\int_{\Hcal}\exp\left[\frac{1}{2}c_1W_{g}^2(x) + c_2W_{g}(x) \right]\Ncal(m,C)(dx)
	\nonumber
	\\
	&=\frac{1}{(1-c_1||g||^2)^{1/2}}\exp\left(\frac{c_2^2||g||^2}{2(1-c_1||g||^2)}\right).
\end{align} 
This is Proposition 6 in \cite{Minh:2020regularizedDiv}. In particular, with $c_1 = 0$,
\begin{align}
	\int_{\Hcal}\exp[c_2W_{g}(x)]\Ncal(m,C)(dx) = \exp\left(\frac{c_2^2}{2}||g||^2\right).
\end{align}
With $c_2 = 1$, the above formula gives Proposition 1.2.7 in \cite{DaPrato:PDEHilbert}.

The proof of Proposition \ref{proposition:exponetial-quadratic-whitenoise-Gaussian} utilizes the Vitali Convergence Theorem on {\it uniformly integrable} sequences of functions,
(see e.g. \cite{Folland:Real,Rudin:RealComplex}).
Let 
$(\Xcal, \Fcal, \mu)$ be a positive measurable space. A sequence of functions $\{f_n\}_{n\in \Nbb} \in \Lcal^1(\Xcal,\mu)$ 
is said to be {\it uniformly integrable} if $\forall \epsilon > 0$ $\exists \delta > 0$ such that
\begin{align}
	\sup_{n \in \Nbb}\int_{E}|f_n|d\mu < \epsilon \;\;\; \text{whenever} \;\;\; \mu(E) < \delta, E \in \Fcal.
\end{align}

\begin{theorem}
	[\textbf{Vitali's Convergence Theorem}]
	Assume that $(\Xcal, \Fcal, \mu)$ is a positive measurable space with $\mu(\Xcal) < \infty$.
	Let $\{f_n\}_{n\in \Nbb}$ be a sequence of functions that are uniformly integrable on $\Xcal$, with
	$f_n \approach f$ a.e. and $|f| < \infty$ a.e.. Then $f \in \Lcal^1(\Xcal, \mu)$ and $||f_n - f||_{\Lcal^1(\Xcal, \mu)} \approach 0$.
\end{theorem}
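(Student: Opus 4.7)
The plan is to establish Vitali's theorem via the classical two-step route: first show $f\in\Lcal^1(\Xcal,\mu)$ by deriving a uniform $\Lcal^1$-bound on $\{f_n\}$ and invoking Fatou's lemma, then prove $\Lcal^1$-convergence by splitting $\Xcal$ into a set of almost uniform convergence (via Egorov) and a small-measure remainder that is controlled by uniform integrability.

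First, I would extract a uniform bound $\sup_{n}\int_{\Xcal}|f_n|\,d\mu<\infty$ from the hypothesis. Given $\epsilon=1$, choose $\delta>0$ as in the definition of uniform integrability. Since $\mu(\Xcal)<\infty$, partition $\Xcal$ into finitely many measurable pieces $E_1,\dots,E_K$ with $\mu(E_j)<\delta$ for each $j$; then $\int_{\Xcal}|f_n|\,d\mu=\sum_{j=1}^K\int_{E_j}|f_n|\,d\mu\leq K$ for every $n$. Applying Fatou's lemma to $|f_n|\to|f|$ a.e. yields $\int_{\Xcal}|f|\,d\mu\leq\liminf_{n}\int_{\Xcal}|f_n|\,d\mu\leq K<\infty$, so $f\in\Lcal^1(\Xcal,\mu)$. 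A useful byproduct is that $\{f\}\cup\{f_n\}_{n\in\Nbb}$ is itself uniformly integrable: for any measurable $E$ with $\mu(E)<\delta$, Fatou gives $\int_E|f|\,d\mu\leq\liminf_n\int_E|f_n|\,d\mu\leq\epsilon$.

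Next, to prove $\|f_n-f\|_{\Lcal^1(\Xcal,\mu)}\to 0$, fix $\epsilon>0$ and let $\delta>0$ be the uniform integrability parameter associated with $\epsilon$. By Egorov's theorem, applicable because $\mu(\Xcal)<\infty$ and $f_n\to f$ a.e. with $f$ finite a.e., there exists a measurable set $E\subset\Xcal$ with $\mu(\Xcal\setminus E)<\delta$ on which $f_n\to f$ uniformly. Splitting the integral,
\begin{align*}
\int_{\Xcal}|f_n-f|\,d\mu
&=\int_{E}|f_n-f|\,d\mu+\int_{\Xcal\setminus E}|f_n-f|\,d\mu\\
&\leq \mu(E)\sup_{x\in E}|f_n(x)-f(x)|+\int_{\Xcal\setminus E}|f_n|\,d\mu+\int_{\Xcal\setminus E}|f|\,d\mu.
\end{align*}
The first term tends to $0$ by uniform convergence on $E$ (using $\mu(E)\leq\mu(\Xcal)<\infty$), while the other two are each $\leq\epsilon$ by uniform integrability applied to the $f_n$ and to $f$. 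Hence $\limsup_n\|f_n-f\|_{\Lcal^1(\Xcal,\mu)}\leq 2\epsilon$, and since $\epsilon>0$ was arbitrary, convergence follows.

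The main technical subtleties are two. First, justifying the uniform integrability of $f$ itself, which I would handle by the Fatou argument above rather than invoking a separate lemma. Second, the finite-measure hypothesis $\mu(\Xcal)<\infty$ is essential both for covering $\Xcal$ by finitely many small-measure sets in the bound on $\int|f_n|\,d\mu$ and for the application of Egorov; I would be careful to invoke it at exactly these points. No other obstacle is expected, as the rest is routine bookkeeping of $\epsilon/\delta$ estimates.
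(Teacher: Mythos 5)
The paper does not prove this statement: it is quoted as a classical result with references to \cite{Folland:Real,Rudin:RealComplex} and is used as a black box in the proof of Proposition \ref{proposition:exponetial-quadratic-whitenoise-Gaussian}. So there is no internal proof to compare against; your argument has to be judged on its own. Your second paragraph is the standard and correct Egorov-based proof: the split
\begin{align*}
\int_{\Xcal}|f_n-f|\,d\mu \leq \mu(E)\sup_{E}|f_n-f| + \int_{\Xcal\setminus E}|f_n|\,d\mu + \int_{\Xcal\setminus E}|f|\,d\mu
\end{align*}
together with uniform integrability of $\{f_n\}$ and the Fatou bound $\int_{\Xcal\setminus E}|f|\,d\mu\leq\liminf_n\int_{\Xcal\setminus E}|f_n|\,d\mu\leq\epsilon$ gives $\limsup_n\|f_n-f\|_{\Lcal^1}\leq 2\epsilon$, and Egorov applies because $\mu(\Xcal)<\infty$. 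That part is complete.

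The flaw is in your first step. A general finite measure space cannot be partitioned into finitely many measurable pieces each of measure less than $\delta$: if $\mu$ has an atom $A$ with $\mu(A)\geq\delta$ (e.g.\ a single point of mass $1$ and $\delta=1/2$), every measurable set containing it has measure at least $\delta$, so the bound $\int_{\Xcal}|f_n|\,d\mu\leq K$ does not follow as written. (On such atoms the $f_n$ are a.e.\ constant and a.e.\ convergence to a finite $f$ forces those constants to be bounded, so the uniform $\Lcal^1$ bound is in fact true here, but it needs that extra argument.) Fortunately the uniform bound is not needed at all: integrability of $f$ already follows from your Egorov set, since $\int_{\Xcal\setminus E}|f|\,d\mu\leq\epsilon$ by Fatou, while on $E$ uniform convergence gives $|f|\leq|f_N|+1$ for $N$ large, so $\int_E|f|\,d\mu\leq\|f_N\|_{\Lcal^1}+\mu(\Xcal)<\infty$. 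Replace the partition argument by this observation and the proof is sound.
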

\begin{proof}
	[\textbf{of Proposition \ref{proposition:exponetial-quadratic-whitenoise-Gaussian}}]
	It suffices to prove Eq.\eqref{equation:exponetial-quadratic-whitenoise-Gaussian} for $m=0$.
Define the sequence $\{f_N = \exp\left(\frac{1}{2}\la C^{-1/2}P_Nx, AC^{-1/2}P_Nx\ra + \la C^{-1/2}P_Nx,b\ra\right)\}_{N \in \Nbb}$.
We show that the sequence $\{f_N\}_{N \in \Nbb}$ is uniformly integrable on $(\Hcal,\mu)$, then apply Vitali's convergence theorem.

By assumption, $A \in \SymTr(\Hcal)_{< I}$, i.e. $A \in \Tr(\Hcal)$ and $I-A > 0$. We first show that $\exists p > 1$ such that
$I -pA > 0$. Let $\{\lambda_k\}_{k\in \Nbb}$ be the eigenvalues of $A$, then $\lambda_k < 1$ $\forall k\in \Nbb$  and $\lim_{k \approach \infty}\lambda_k = 0$. Let $\lambda_{\max}$ be the largest eigenvalue. If $\lambda_{\max} \leq 0$, then
$1-p\lambda_k \geq 1 $ $\forall p > 0$. If $0< \lambda_{\max} <1$, then let $1 < p < \frac{1}{\lambda_{\max}}$, so that
$1-p\lambda_k \geq 1 - p\lambda_{\max} > 0$ $\forall k \in \Nbb$.
Thus it follows that $I-pA > 0$, hence $pA \in \SymTr(\Hcal)_{< I}$.

Let $M = C^{-1/2}P_NAC^{-1/2}P_N$. Since $C^{-1/2}P_N \in \Sym(\Hcal)$ and $C^{-1/2}P_NC^{1/2} = P_N$, we have $M \in \Sym(\Hcal)$ and $C^{1/2}MC^{1/2} = P_NAP_N$. Then $P_NAP_N \in \SymTr(\Hcal)_{< I}$ and $pP_NAP_N \in \SymTr(\Hcal)_{< I}$.
Thus by Theorem \ref{theorem:exponential-quadratic-Gaussian},
\begin{align*}
&J_{N,p} \doteq \int_{\Hcal}\exp\left(\frac{p}{2}\la C^{-1/2}P_Nx, pAC^{-1/2}P_Nx\ra + p\la C^{-1/2}P_Nx,b\ra\right) \Ncal(0,C)(dx)
\\
& = \int_{\Hcal}\exp\left(\frac{1}{2}\la x, p C^{-1/2}P_NAC^{-1/2}P_Nx\ra + \la x,pC^{-1/2}P_Nb\ra\right) \Ncal(0,C)(dx)
\\
& = \int_{\Hcal}\exp\left(\frac{1}{2}\la x, pMx\ra + \la x,pC^{-1/2}P_Nb\ra\right)\Ncal(0,C)(dx)
\\
& = [\det(I- pP_NAP_N)]^{-1/2}\exp\left(\frac{p^2}{2}||(I-pP_NAP_N)^{-1/2}P_Nb||^2\right).
\end{align*}
Taking limit as $N \approach \infty$, applying Lemma \ref{lemma:trace-class-orthogonal-projection} and the continuity of the Fredholm determinant in the trace class norm, we obtain
\begin{align}
&\lim_{N \approach \infty}J_{N,p} = \lim_{N \approach \infty}[\det(I- pP_NAP_N)]^{-1/2}\exp\left(\frac{p^2}{2}||(I-pP_NAP_N)^{-1/2}P_Nb||^2\right)
\nonumber
\\
& = [\det(I- pA)]^{-1/2}\exp\left(\frac{p^2}{2}||(I-pA)^{-1/2}b||^2\right) < \infty.
\label{equation:limit-JNp}
\end{align}
Let $q$ be the H\"older conjugate of $p$, i.e. $\frac{1}{p} + \frac{1}{q} = 1$.
By H\"older's inequality, for any set $A \in \Bsc(\Hcal)$,
\begin{align*}
	J_N &\doteq \int_{A}\exp\left(\frac{1}{2}\la C^{-1/2}P_Nx, AC^{-1/2}P_Nx\ra + \la C^{-1/2}P_Nx,b\ra\right) \Ncal(0,C)(dx)
	\\
	&= \int_{\Hcal}\onebf_A\exp\left(\frac{1}{2}\la C^{-1/2}P_Nx, AC^{-1/2}P_Nx\ra + \la C^{-1/2}P_Nx,b\ra\right) \Ncal(0,C)(dx)
	\\
	& \leq \mu(A)^{1/q}
	\\
	& \times \left(\int_{\Hcal}\exp\left(\frac{p}{2}\la C^{-1/2}P_Nx, AC^{-1/2}P_Nx\ra + p\la C^{-1/2}P_Nx,b\ra\right) \Ncal(0,C)(dx)\right)^{1/p}.
\end{align*}
Combining this inequality with the limit in Eq.\eqref{equation:limit-JNp}, it follows that $\forall \epsilon > 0$, $\exists \delta > 0$ such that $\forall A \in \Bsc(\Hcal)$ with $\mu(A) < \delta$,
\begin{align*}
\sup_{N \in \Nbb}\int_{A}\exp\left(\frac{1}{2}\la C^{-1/2}P_Nx, AC^{-1/2}P_Nx\ra + \la C^{-1/2}P_Nx,b\ra\right) \Ncal(0,C)(dx) < \epsilon.
\end{align*}
Thus $\{f_N = \exp\left(\frac{1}{2}\la C^{-1/2}P_Nx, AC^{-1/2}P_Nx\ra + \la C^{-1/2}P_Nx,b\ra\right)\}_{N \in \Nbb}$ is uniformly integrable on $(\Hcal,\mu)$. Since
$\lim_{N \approach \infty}\la C^{-1/2}P_N(x-m), AC^{-1/2}P_N(x-m)\ra + \la C^{-1/2}P_N(x-m),b\ra = \la C^{-1/2}(x-m), AC^{-1/2}(x-m)\ra + \la C^{-1/2}(x-m),b\ra$ in $\Lcal^2(\Hcal,\mu)$, there is a subsequence 
of $\{f_N\}_{N \in \Nbb}$ that converges a.e.
to  $\exp\left(\frac{1}{2}\la C^{-1/2}x, AC^{-1/2}x\ra + \la C^{-1/2}x,b\ra\right)$.
Applying Vitali's convergence theorem to this subsequence then gives
\begin{align*}
&J \doteq \int_{\Hcal}\exp\left(\frac{1}{2}\la C^{-1/2}x, AC^{-1/2}x\ra + \la C^{-1/2}x,b\ra\right)N(0,C)(dx)
\\
&= \lim_{N \approach \infty}\int_{\Hcal}\exp\left(\frac{1}{2}\la C^{-1/2}P_Nx, AC^{-1/2}P_Nx\ra + \la C^{-1/2}P_Nx,b\ra\right)\Ncal(0,C)(dx)
\\
& = \lim_{N \approach \infty}[\det(I-P_NAP_N)]^{-1/2}\exp\left(\frac{1}{2}||(I-P_NAP_N)^{-1/2}P_Nb||^2\right)
\\
& = [\det(I-A)]^{-1/2}\exp\left(\frac{1}{2}||(I-A)^{-1/2}b||^2\right).
\end{align*}
\qed
\end{proof}

The following is a special case of Corollary 3 in \cite{Minh:Positivity2020}.
\begin{lemma}
	\label{lemma:A-alpha-I+A-HS-Tr}
	Let $0 \leq \alpha \leq 1$.
	Let $A \in \Sym(\Hcal) \cap \HS(\Hcal)$, $I+A > 0$. Then
	\begin{align}
		&(1-\alpha)(I+A)^{-\alpha} + \alpha (I+A)^{1-\alpha} \in \PC_1(\Hcal), 
		\\
		\text{with } &(1-\alpha)(I+A)^{-\alpha} + \alpha (I+A)^{1-\alpha} - I \in \Sym(\Hcal) \cap \Tr(\Hcal).
	\end{align}
	\end{lemma}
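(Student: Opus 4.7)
Set $B = (1-\alpha)(I+A)^{-\alpha} + \alpha(I+A)^{1-\alpha}$. First I would verify that $B$ is a well-defined bounded self-adjoint operator. Since $A$ is compact self-adjoint with eigenvalues $\{\lambda_k\}_{k \in \Nbb}$ accumulating only at $0$, the spectrum of $I+A$ clusters only at $1$, so the strict positivity $I+A > 0$ upgrades to positive definiteness: $\inf_k(1+\lambda_k) > 0$. Consequently $(I+A)^{-\alpha}$ and $(I+A)^{1-\alpha}$ are bounded self-adjoint operators via the Borel functional calculus, so $B$ is bounded and self-adjoint.

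Next I would establish positive definiteness by applying the weighted AM--GM inequality to the function $f(s) = (1-\alpha)s^{-\alpha} + \alpha s^{1-\alpha}$ on $(0, \infty)$:
\[
f(s) \;\geq\; (s^{-\alpha})^{1-\alpha}(s^{1-\alpha})^{\alpha} \;=\; s^{-\alpha(1-\alpha)+\alpha(1-\alpha)} \;=\; 1.
\]
Functional calculus applied to the spectrum of $I+A$ then yields $B \geq I$, so in particular $\la x, Bx\ra \geq ||x||^{2}$ for all $x \in \Hcal$. This gives $B \in \bP(\Hcal)$.

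The main remaining step is to show $B - I \in \Tr(\Hcal)$. Let $g(t) = f(1+t) - 1$; a short direct computation yields $g(0) = 0$, $g'(0) = 0$, and $g''(0) = \alpha(1-\alpha)$, so $g(t) = \tfrac{\alpha(1-\alpha)}{2}t^{2} + O(t^{3})$ as $t \to 0$. By functional calculus, $B - I = g(A)$ is compact with eigenvalues $\{g(\lambda_k)\}_{k \in \Nbb}$. Since $\lambda_k \to 0$, there exist a constant $C_{\alpha} > 0$ and an index $N$ with $|g(\lambda_k)| \leq C_{\alpha}\lambda_k^{2}$ for all $k > N$; the remaining finite sum is finite by continuity of $g$ on a neighborhood of the (compact) spectrum of $A$. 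The Hilbert--Schmidt assumption gives $\sum_k \lambda_k^{2} = ||A||_{\HS}^{2} < \infty$, hence $\sum_k |g(\lambda_k)| < \infty$, so $B - I \in \Sym(\Hcal) \cap \Tr(\Hcal)$. Combined with $B \geq I$, this places $B$ in $\PC_1(\Hcal)$.

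I expect no serious obstacle. The one delicate observation is the second-order vanishing $g(0) = g'(0) = 0$: this is exactly what allows the merely Hilbert--Schmidt hypothesis on $A$ (rather than trace-class) to imply that $B - I$ is trace class, and it is a direct consequence of the specific exponents $-\alpha$ and $1-\alpha$ paired with the weights $1-\alpha$ and $\alpha$ in the definition of $B$.
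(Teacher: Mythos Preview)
Your argument is correct. The AM--GM bound $f(s)\geq 1$ gives $B\geq I$, and the Taylor computation $g(0)=g'(0)=0$, $g''(0)=\alpha(1-\alpha)$ is accurate; together with compactness of the spectrum of $A$ inside $(-1,\infty)$ this yields $\sum_k|g(\lambda_k)|\lesssim \sum_k\lambda_k^2=||A||_{\HS}^2<\infty$, hence $B-I\in\Tr(\Hcal)$.

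The paper does not prove this lemma at all: it simply records it as a special case of Corollary~3 in \cite{Minh:Positivity2020}. Your direct spectral-calculus argument is thus a genuinely different (and self-contained) route. The cited result is more general, but your approach has the advantage of isolating exactly why the Hilbert--Schmidt hypothesis suffices---namely the second-order vanishing of $g$ at $0$ forced by the matched exponents/weights---which is a nice structural observation that the bare citation does not convey.
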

As a consequence of Lemma \ref{lemma:A-alpha-I+A-HS-Tr}, we have $0 < \det[(1-\alpha)(I+A)^{-\alpha} + \alpha (I+A)^{1-\alpha}] < \infty$ for $0 \leq \alpha \leq 1$
and $A \in \Sym(\Hcal) \cap \HS(\Hcal)$, $I+A > 0$.
\begin{lemma}
[\cite{Minh:2019AlphaBeta}, Lemma 10]
\label{lemma:I+A-power-Tr}
Let $A \in \Sym(\Hcal) \cap \Tr(\Hcal)$, with $I+A > 0$. Let $\beta \in \R$ be fixed but arbitrary.
Then $(I+A)^{\beta} \in \PC_1(\Hcal)$, with $(I+A)^{\beta} - I \in \Sym(\Hcal) \cap \Tr(\Hcal)$.
\end{lemma}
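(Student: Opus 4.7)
The plan is to diagonalize $A$ via the spectral theorem and then reduce the statement to a pointwise estimate on the eigenvalues. Since $A \in \Sym(\Hcal) \cap \Tr(\Hcal)$ is compact and self-adjoint, it admits an orthonormal eigenbasis $\{\phi_k\}_{k \in \Nbb}$ with real eigenvalues $\{\lambda_k\}_{k \in \Nbb}$ satisfying $\sum_{k=1}^{\infty} |\lambda_k| < \infty$ and $\lambda_k \to 0$. The assumption $I+A > 0$ gives $1 + \lambda_k > 0$ for every $k$, so $(I+A)^{\beta}$ is well-defined via functional calculus, with $(I+A)^{\beta} = \sum_{k=1}^{\infty}(1+\lambda_k)^{\beta}\, \phi_k \otimes \phi_k$ and $(I+A)^{\beta} > 0$.

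Next I would set $B = (I+A)^{\beta} - I$, so that $B \in \Sym(\Hcal)$ has eigenvalues $\mu_k = (1+\lambda_k)^{\beta} - 1$ on the same eigenbasis. The key analytic step is to prove $B \in \Tr(\Hcal)$, i.e.\ $\sum_k |\mu_k| < \infty$. For this I would use the elementary estimate
\begin{align}
|(1+x)^{\beta} - 1| \;\leq\; C_{\beta}\, |x| \qquad \text{for all } x \in [-1/2,\, 1/2],
\end{align}
which follows from the mean value theorem applied to $f(x) = (1+x)^{\beta}$ on the compact interval $[-1/2,1/2]$ (where $f'$ is bounded). Since $\lambda_k \to 0$, there exists $N$ with $|\lambda_k| \leq 1/2$ for $k \geq N$; then $\sum_{k \geq N} |\mu_k| \leq C_{\beta}\sum_{k \geq N}|\lambda_k| < \infty$, while the finitely many initial terms contribute a finite sum. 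Hence $B \in \Sym(\Hcal) \cap \Tr(\Hcal)$.

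Finally, writing $(I+A)^{\beta} = I + B$ with $B \in \Sym(\Hcal) \cap \Tr(\Hcal)$ exhibits $(I+A)^{\beta}$ as a strictly positive element of the unitized trace class algebra $\Tr_X(\Hcal)$, and since $(I+A)^{\beta} > 0$ has been established, we conclude $(I+A)^{\beta} \in \bP(\Hcal) \cap \Tr_X(\Hcal) = \PC_1(\Hcal)$. There is no real obstacle here beyond the elementary Lipschitz bound on $(1+x)^{\beta}-1$ near zero; the entire argument hinges on the fact that $\lambda_k \to 0$ allows us to absorb the nonlinearity of $x \mapsto (1+x)^{\beta}$ into a uniform linear bound on a neighborhood of the origin, reducing trace-class summability of $\{\mu_k\}$ to that of $\{\lambda_k\}$.
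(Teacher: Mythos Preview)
Your argument is correct. The paper does not supply its own proof of this lemma; it is quoted verbatim from \cite{Minh:2019AlphaBeta} as an auxiliary result, so there is no in-paper proof to compare against. Your spectral-theoretic approach---diagonalizing $A$, applying the mean value theorem to $x \mapsto (1+x)^{\beta}$ near $0$ to obtain $|(1+\lambda_k)^{\beta}-1| \leq C_{\beta}|\lambda_k|$ for large $k$, and then summing---is exactly the natural argument and is what one would expect the original reference to contain. One very minor point: to conclude $(I+A)^{\beta} \in \bP(\Hcal)$ (positive definite, not merely strictly positive) you should note that since $\lambda_k \to 0$ and $1+\lambda_k > 0$ for all $k$, we have $\inf_k(1+\lambda_k) > 0$, hence $\inf_k(1+\lambda_k)^{\beta} > 0$, giving the required uniform lower bound; this is implicit in your argument but worth making explicit since the paper distinguishes $\Sym^{++}(\Hcal)$ from $\bP(\Hcal)$.
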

As a consequence of Lemma \ref{lemma:I+A-power-Tr}, for $I+A \in \PC_1(\Hcal)$, we have $\det[(I+A)^{\beta}]$ is well-defined $\forall \beta \in \R$. Furthermore, from definition of $\det(I+A)$,
\begin{align}
	\det[(I+A)^{\beta}] = [\det(I+A)]^{\beta} > 0, \;\;\forall \beta \in \R.
\end{align}
\begin{proposition}
	[\textbf{Normalizing factor}]
	\label{proposition:normalizing factor}
	Assume the hypothesis of Theorem \ref{theorem:geometric-interpolation-equivalent-Gaussian-Hilbert-space}.
	Assume further that $S_i \in \SymTr(\Hcal)_{< I}$, $i=0,1$. Then the normalizing factor $Z_{\alpha}^G(p_0:p_1)$ is given by
	\begin{align}
		&Z_{\alpha}^G(p_0:p_1) \doteq \int_{\Hcal}p_0^{1-\alpha}(x)p_1^{\alpha}(x) \; d\mu_{*}(x)
		\nonumber
		\\
		& =  [\det(I-S_0)]^{-\frac{1-\alpha}{2}}[\det(I-S_1)]^{-\frac{\alpha}{2}}
		\nonumber
		\\
		& \quad \times \exp\left[-\frac{1-\alpha}{2}||(I-S_0)^{-1/2}C_{*}^{-1/2}(m_0 - m_{*})||^2\right]
		\nonumber
		\\
		& \quad \times 
		\exp\left[-\frac{\alpha}{2}||(I-S_1)^{-1/2}C_{*}^{-1/2}(m_1 - m_{*})||^2\right]
		\nonumber
		\\
		&\quad \times \det(I-S_{\alpha})^{1/2}\exp\left(\frac{1}{2}||(I-S_{\alpha})^{-1/2}C_{*}^{-1/2}(m_{\alpha} - m_{*})||^2\right).
		\label{equation:normalizing-factor-Tr}
	\end{align}
Equivalently, with $I + A = (I-S_1)^{-1/2}(I-S_0)(I-S_1)^{-1/2} > 0$, with $A =(I-S_1)^{-1/2}(S_1-S_0)(I-S_1)^{-1/2}$, then $Z_{\alpha}^G(p_0:p_1)$ is given by
\begin{align}
	&Z_{\alpha}^G(p_0:p_1)
	\nonumber
	\\
	&= \det[(1-\alpha)(I+A)^{-\alpha} + \alpha (I+A)^{1-\alpha}]^{-1/2}
	\nonumber
	\\
	& \quad \times \exp\left[-\frac{1-\alpha}{2}||(I-S_0)^{-1/2}C_{*}^{-1/2}(m_0 - m_{*})||^2\right]
	\nonumber
	\\
	& \quad \times 
	\exp\left[-\frac{\alpha}{2}||(I-S_1)^{-1/2}C_{*}^{-1/2}(m_1 - m_{*})||^2\right]
	\nonumber
	\\
	& \quad \times \exp\left(\frac{1}{2}||(I-S_{\alpha})^{-1/2}C_{*}^{-1/2}(m_{\alpha} - m_{*})||^2\right).
	\label{equation:normalizing-factor-HS-1}
\end{align}
Furthermore, this expression is valid for $S_i \in  \SymHS(\Hcal)_{< I}$, $i=0,1$.	
\end{proposition}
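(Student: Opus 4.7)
The plan is a two-stage argument: first establish \eqref{equation:normalizing-factor-Tr} by direct Gaussian integration when $S_0, S_1 \in \SymTr(\Hcal)_{< I}$; then show via a functional-calculus identity that \eqref{equation:normalizing-factor-Tr} coincides with \eqref{equation:normalizing-factor-HS-1} in that setting; and finally extend \eqref{equation:normalizing-factor-HS-1} to $S_0, S_1 \in \SymHS(\Hcal)_{< I}$ by finite-rank truncation.

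For the trace-class case, I would write $u_x = C_{*}^{-1/2}(x-m_{*})$ and $u_i = C_{*}^{-1/2}(m_i - m_{*})$, so that the explicit log-density \eqref{equation:RN-traceclass} decomposes $\log p_i(x)$ into a constant, a linear term $\la u_x, (I-S_i)^{-1}u_i\ra$, and a quadratic term $-\tfrac{1}{2}\la u_x, S_i(I-S_i)^{-1}u_x\ra$. Forming $(1-\alpha)\log p_0 + \alpha \log p_1$ and invoking the two algebraic identities
\begin{align*}
(1-\alpha)S_0(I-S_0)^{-1} + \alpha S_1(I-S_1)^{-1} &= S_{\alpha}(I-S_{\alpha})^{-1}, \\
(1-\alpha)(I-S_0)^{-1}u_0 + \alpha(I-S_1)^{-1}u_1 &= (I-S_{\alpha})^{-1}u_{\alpha},
\end{align*}
(where $u_{\alpha} = C_{*}^{-1/2}(m_{\alpha}-m_{*})$; both follow immediately from \eqref{equation:S-alpha}--\eqref{equation:m-alpha} by subtracting $I$ from $(I-S_{\alpha})^{-1} = (1-\alpha)(I-S_0)^{-1}+\alpha(I-S_1)^{-1}$) reduces the integrand to an exponential-quadratic form to which Proposition \ref{proposition:exponetial-quadratic-whitenoise-Gaussian} applies with $A = -S_{\alpha}(I-S_{\alpha})^{-1}$ and $b = (I-S_{\alpha})^{-1}u_{\alpha}$. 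Since $I - A = (I-S_{\alpha})^{-1}$ and $(I-A)^{-1}b = u_{\alpha}$, the integration formula produces \eqref{equation:normalizing-factor-Tr} directly.

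To bridge to \eqref{equation:normalizing-factor-HS-1}, functional calculus on the commuting operator $B = I+A$ gives $(1-\alpha)B^{-\alpha} + \alpha B^{1-\alpha} = B^{-\alpha}(I+\alpha A)$. Fredholm-determinant multiplicativity, together with $\det(I+A) = \det(I + (I-S_1)^{-1}(S_1-S_0)) = \det(I-S_0)/\det(I-S_1)$ (from $\det(I+BKB) = \det(I+B^2K)$ with $B=(I-S_1)^{-1/2}$, $K=S_1-S_0$), combined with the factorization $(1-\alpha)(I-S_0)^{-1}+\alpha(I-S_1)^{-1} = (I-S_1)^{-1}[I-(1-\alpha)S_1-\alpha S_0](I-S_0)^{-1}$, which yields $\det[I-(1-\alpha)S_1-\alpha S_0] = \det(I-S_0)\det(I-S_1)/\det(I-S_{\alpha})$ and hence $\det(I+\alpha A) = \det(I-S_0)/\det(I-S_{\alpha})$, collapses the HS-level prefactor to $\det(I-S_0)^{-(1-\alpha)/2}\det(I-S_1)^{-\alpha/2}\det(I-S_{\alpha})^{1/2}$, matching \eqref{equation:normalizing-factor-Tr} and establishing \eqref{equation:normalizing-factor-HS-1} in the trace-class setting.

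For the HS extension, let $\{e_k\}$ be the orthonormal eigenbasis of $C_{*}$, $P_N = \sum_{k=1}^N e_k\otimes e_k$, and $S_i^{(N)} = P_NS_iP_N \in \SymTr(\Hcal)$, which satisfies $\|S_i^{(N)}-S_i\|_{\HS}\to 0$ and $I-S_i^{(N)}>0$ for $N$ large. Define $\mu_i^{(N)} = \Ncal(m_i, C_{*}^{1/2}(I-S_i^{(N)})C_{*}^{1/2})$ and the corresponding $A^{(N)}, S_{\alpha}^{(N)}, m_{\alpha}^{(N)}$. The operator $(1-\alpha)(I+A^{(N)})^{-\alpha}+\alpha(I+A^{(N)})^{1-\alpha} - I$ is trace class by Lemma \ref{lemma:A-alpha-I+A-HS-Tr} and converges in trace norm to its HS-level counterpart, ensuring convergence of the Fredholm-determinant prefactor of \eqref{equation:normalizing-factor-HS-1} at level $N$. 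Corollary \ref{corollary:log-Radon-Nikodym-L2-convergence} provides $\Lcal^2(\Hcal, \mu_{*})$ convergence of $\log(d\mu_i^{(N)}/d\mu_{*}) \to \log(d\mu_i/d\mu_{*})$, which, combined with H\"older's inequality and uniform integrability of $\{(p_0^{(N)})^{(1-\alpha)q}(p_1^{(N)})^{\alpha q}\}_{N}$ for some $q > 1$, upgrades to $\Lcal^1(\Hcal,\mu_{*})$ convergence of the integrands and hence of $Z_{\alpha}^G$. The main obstacle is precisely this uniform integrability: one must control the rescaled normalizing integrals uniformly in $N$. The natural strategy is to apply the trace-class formula already established at the level of the perturbed exponents $(1-\alpha)q, \alpha q$ and use the bound $|\log\dettwo(I-S)| \leq \|(I-S)^{-1}\|\|S\|_{\HS}^2$ from Proposition \ref{proposition:logdet2-I-S-finite} to keep the constants bounded, exploiting that $\|(I-S_i^{(N)})^{-1}\|$ is eventually uniformly controlled by $\|(I-S_i)^{-1}\|$.
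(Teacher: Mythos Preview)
Your trace-class computation and the derivation of the equivalence \eqref{equation:normalizing-factor-Tr} $\Leftrightarrow$ \eqref{equation:normalizing-factor-HS-1} via $(1-\alpha)B^{-\alpha}+\alpha B^{1-\alpha}=B^{-\alpha}(I+\alpha A)$ plus Fredholm-determinant multiplicativity are correct and match the paper's proof essentially line by line: both reduce the integral to Proposition~\ref{proposition:exponetial-quadratic-whitenoise-Gaussian} with $A=-S_\alpha(I-S_\alpha)^{-1}$, $b=(I-S_\alpha)^{-1}u_\alpha$, and both rewrite the determinant prefactor by the same chain of identities.

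For the HS extension you are reading more into the proposition than the paper does. In the paper's own proof, ``valid for $S_i\in\SymHS(\Hcal)_{<I}$'' is established only as \emph{well-definedness} of the right-hand side of \eqref{equation:normalizing-factor-HS-1}: since $A\in\Sym(\Hcal)\cap\HS(\Hcal)$ with $I+A>0$, Lemmas~\ref{lemma:A-alpha-I+A-HS-Tr} and~\ref{lemma:I+A-power-Tr} give $(1-\alpha)(I+A)^{-\alpha}+\alpha(I+A)^{1-\alpha}\in\PC_1(\Hcal)$ with positive finite determinant. The \emph{equality} $Z_\alpha^G(p_0:p_1)=$ RHS in the HS case is deferred to the proof of Theorem~\ref{theorem:geometric-interpolation-equivalent-Gaussian-Hilbert-space} (general case), and there the paper does \emph{not} pass via $L^1$ convergence of the integrands. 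Instead it argues indirectly: $Z_\alpha^G(p_0^k:p_1^k)\to Z$ via Corollary~\ref{corollary:limit-det-S0-S1}; the $L^2$-limits of $\log[(p_0^k)^{1-\alpha}(p_1^k)^\alpha/Z_\alpha^G(p_0^k:p_1^k)]$ and of $\log(d\mu_\alpha^k/d\mu_*)$ must agree, which forces $p_0^{1-\alpha}p_1^\alpha/Z=d\mu_\alpha/d\mu_*$ and hence $Z=Z_\alpha^G(p_0:p_1)$.

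Your direct route is a genuinely different strategy and is plausible, but two points are unsupported as written. First, your assertion that $(1-\alpha)(I+A^{(N)})^{-\alpha}+\alpha(I+A^{(N)})^{1-\alpha}-I$ converges in \emph{trace norm} from HS convergence of $A^{(N)}$ is not justified; the paper sidesteps this by bounding $\log\det$ directly through the HS-level estimate of Proposition~\ref{proposition:limit-logdet-A-B-alpha}. Second, the uniform-integrability step requires checking $I+(q-1)S_\alpha^{(N)}>0$ uniformly in $N$ for some $q>1$ and then uniformly bounding the $q$-scaled Gaussian integrals; this is feasible (since $\|S_\alpha^{(N)}\|$ stabilizes near $\|S_\alpha\|$) but is more labor than the paper's indirect identification, which avoids both issues entirely.
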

\begin{proof}
	By definition,
	\begin{align}
		&Z_{\alpha}^G(p_0:p_1) = \int_{\Hcal}p_0^{1-\alpha}(x)p_1^{\alpha}(x) \; d\mu_{*}(x)
		\nonumber
		\\
		& =  [\det(I-S_0)]^{-\frac{1-\alpha}{2}}[\det(I-S_1)]^{-\frac{\alpha}{2}}
		\nonumber
		\\
		& \quad \times \exp\left[-\frac{1-\alpha}{2}||(I-S_0)^{-1/2}C_{*}^{-1/2}(m_0 - m_{*})||^2\right]
		\nonumber
		\\
		& \quad \times 
		\exp\left[-\frac{\alpha}{2}||(I-S_1)^{-1/2}C_{*}^{-1/2}(m_1 - m_{*})||^2\right]
		\nonumber
		\\
		&\quad \times J,
		\label{equation:normalizing-1}
	\end{align}
	where $J$ is the following integral over $\Hcal$ with respect to $\mu_{*} = \Ncal(m_{*}, C_{*})$
	\begin{align*}
		J &= \int_{\Hcal}\exp\left\{-\frac{1}{2}\la C_{*}^{-1/2}(x-m_{*}), [(1-\alpha)S_0(I-S_0)^{-1} + \alpha S_1(I-S_1)^{-1}]\right.
		\\
		&\quad \quad \quad \quad \quad C_{*}^{-1/2}(x-m_{*})\ra 
		\\
		& \quad \quad \quad \quad \quad +(\la C_{*}^{-1/2}(x-m_{*}), (1-\alpha)(I-S_0)^{-1}C_{*}^{-1/2}(m_0 - m_{*})\ra)
		\\
		& \quad \quad \quad \quad \quad \left.+ (\la C_{*}^{-1/2}(x-m_{*}), \alpha(I-S_1)^{-1}C_{*}^{-1/2}(m_1 - m_{*})\ra) \right\}d\mu_{*}(x)
		\\
		&= \int_{\Hcal}\exp\left\{-\frac{1}{2}\la C_{*}^{-1/2}x, [(1-\alpha)S_0(I-S_0)^{-1} + \alpha S_1(I-S_1)^{-1}]
		C_{*}^{-1/2}x\ra 
		\right.
		\\
		& \quad \quad \quad \quad \quad +(\la C_{*}^{-1/2}x, (1-\alpha)(I-S_0)^{-1}C_{*}^{-1/2}(m_0 - m_{*})\ra)
		\\
		& \quad \quad \quad \quad \quad \left.+ (\la C_{*}^{-1/2}x, \alpha(I-S_1)^{-1}C_{*}^{-1/2}(m_1 - m_{*})\ra) \right\}d\Ncal(0,C_{*})(x).
	\end{align*}
Define the following
\begin{align*}
A &= -[(1-\alpha)S_0(I-S_0)^{-1} + \alpha S_1(I-S_1)^{-1}],
\\
b&= (1-\alpha)(I-S_0)^{-1}C_{*}^{-1/2}(m_0 - m_{*}) + \alpha(I-S_1)^{-1}C_{*}^{-1/2}(m_1 - m_{*}).
\end{align*}
Since $S_0, S_1 \in \SymTr(\Hcal)_{<I}$, we have $A \in \Sym(\Hcal) \cap \Tr(\Hcal)$. Furthermore,
\begin{align*}
I - A = (1-\alpha)(I-S_0)^{-1} + \alpha (I-S_1)^{-1} > 0.
\end{align*}
Thus $A \in \SymTr(\Hcal)_{< I}$.
By Proposition \ref{proposition:exponetial-quadratic-whitenoise-Gaussian},
\begin{align}
J = [\det(I-A)]^{-1/2}\exp\left(\frac{1}{2}||(I-A)^{-1/2}b||^2\right).
\label{equation:normalize-J-A}
\end{align}
Define $S_{\alpha} \in \Tr(\Hcal)$ by
\begin{align*}
	&S_{\alpha}(I-S_{\alpha})^{-1} = (1-\alpha)S_0(I-S_0)^{-1} + \alpha S_1(I-S_1)^{-1}\\
	&\equivalent (I-S_{\alpha})^{-1} - I = (1-\alpha)[(I-S_0)^{-1}-I] + \alpha [(I-S_1)^{-1}-I]\\
	&\equivalent (I-S_{\alpha})^{-1} = (1-\alpha)(I-S_0)^{-1} + \alpha (I-S_1)^{-1}
	\\
	& \equivalent (I-S_{\alpha}) = [(1-\alpha)(I-S_0)^{-1} + \alpha (I-S_1)^{-1}]^{-1}
	\\
	& \equivalent S_{\alpha} = I - [(1-\alpha)(I-S_0)^{-1} + \alpha (I-S_1)^{-1}]^{-1}.
\end{align*}
With this definition of $S_{\alpha}$, we have
\begin{align*}
S_{\alpha}(I-S_{\alpha})^{-1} = -A \equivalent I-A = I+S_{\alpha}(I-S_{\alpha})^{-1} = (I-S_{\alpha})^{-1}.
\end{align*}
It follows that
\begin{align}
\det(I-A) = \det(I-S_{\alpha})^{-1}.
\label{equation:normalize-detI-A-S-alpha}
\end{align}
Define $m_{\alpha} \in \Hcal$ by
\begin{align*}
	&m_{\alpha} = m_{*} + C_{*}^{1/2}(I-S_{\alpha})[(1-\alpha)(I-S_0)^{-1}C_{*}^{-1/2}(m_0 - m_{*}) 
	\\
	& \quad \quad \quad \quad \quad \quad \quad + \alpha(I-S_1)^{-1}C_{*}^{-1/2}(m_1 - m_{*})].
\end{align*}
Then clearly $m_{\alpha} - m_{*} \in \range(C_{*}^{1/2})$ and
\begin{align*}
	&(I-S_{\alpha})^{-1}C_{*}^{-1/2}(m_{\alpha} - m_{*}) 
	\\
	&= (1-\alpha)(I-S_0)^{-1}C_{*}^{-1/2}(m_0 - m_{*}) + \alpha(I-S_1)^{-1}C_{*}^{-1/2}(m_1 - m_{*}) =b.
\end{align*}
It follows that
\begin{align}
\exp\left(\frac{1}{2}||(I-A)^{-1/2}b||^2\right) = \exp\left(\frac{1}{2}||(I-S_{\alpha})^{-1/2}C_{*}^{-1/2}(m_{\alpha} - m_{*})||^2 \right).
\label{equation:normalize-J-b}
\end{align}
Combining Eqs.\eqref{equation:normalize-J-A}, \eqref{equation:normalize-detI-A-S-alpha}, \eqref{equation:normalize-J-b} gives
\begin{align}
J = \det(I-S_{\alpha})^{1/2}\exp\left(\frac{1}{2}||(I-S_{\alpha})^{-1/2}C_{*}^{-1/2}(m_{\alpha} - m_{*})||^2 \right).
\label{equation:normalizing-J}
\end{align}
	Combining Eqs.\eqref{equation:normalizing-1} and \eqref{equation:normalizing-J},
	the overall normalizing factor is thus
	\begin{align*}
		&Z_{\alpha}^G(p_0:p_1) = \int_{\Hcal}p_0^{1-\alpha}(x)p_1^{\alpha}(x) d\mu_{*}(x)
		\\
		& =  [\det(I-S_0)]^{-\frac{1-\alpha}{2}}[\det(I-S_1)]^{-\frac{\alpha}{2}}
		\\
		& \quad \times \exp\left[-\frac{1-\alpha}{2}||(I-S_0)^{-1/2}C_{*}^{-1/2}(m_0 - m_{*})||^2\right]
		\\
		& \quad \times 
		\exp\left[-\frac{\alpha}{2}||(I-S_1)^{-1/2}C_{*}^{-1/2}(m_1 - m_{*})||^2\right]
		\\
		&\quad \times \det(I-S_{\alpha})^{1/2}\exp\left(\frac{1}{2}||(I-S_{\alpha})^{-1/2}C_{*}^{-1/2}(m_{\alpha} - m_{*})||^2\right).
	\end{align*}
	This gives the first expression for $Z_{\alpha}^G(p_0:p_1)$ in Eq.\eqref{equation:normalizing-factor-Tr}.
With $S_{\alpha} = I - [(1-\alpha)(I-S_0)^{-1} + \alpha (I-S_1)^{-1}]^{-1}$, we have
\begin{align*}
	&[\det(I-S_0)]^{-\frac{1-\alpha}{2}}[\det(I-S_1)]^{-\frac{\alpha}{2}}\det(I-S_{\alpha})^{1/2}
	\\
	& = [\det(I-S_0)]^{-\frac{1-\alpha}{2}}[\det(I-S_1)]^{-\frac{\alpha}{2}}\det[(1-\alpha)(I-S_0)^{-1} + \alpha (I-S_1)^{-1}]^{-1/2}
	\\
	 & = [\det(I-S_0)]^{\frac{\alpha}{2}}[\det(I-S_1)]^{-\frac{\alpha}{2}}\det[(1-\alpha)I+ \alpha (I-S_0)(I-S_1)^{-1}]^{-1/2}
	\\
	& = [\det(I-S_0)]^{\frac{\alpha}{2}}[\det(I-S_1)]^{-\frac{\alpha}{2}}
	\\
	& \quad \times \det[(1-\alpha)I+ \alpha (I-S_1)^{-1/2}(I-S_0)(I-S_1)^{-1/2}]^{-1/2}
	\\
	& = [\det((I-S_1)^{-1/2}(I-S_0)(I-S_1)^{-1/2})]^{\alpha/2}
	\\
	& \quad \times \det[(1-\alpha)I+ \alpha (I-S_1)^{-1/2}(I-S_0)(I-S_1)^{-1/2}]^{-1/2}
	\\
	& = \det[(I+A)^{\alpha/2}]\det[(1-\alpha) I + \alpha (I+A)]^{-1/2} 
	\\
	&= \det[(1-\alpha)(I+A)^{-\alpha} + \alpha (I+A)^{1-\alpha}]^{-1/2}.
\end{align*}
Here $I + A = (I-S_1)^{-1/2}(I-S_0)(I-S_1)^{-1/2} > 0$, with $A =(I-S_1)^{-1/2}(S_1-S_0)(I-S_1)^{-1/2}$.
This gives the second expression for $Z_{\alpha}^G(p_0:p_1)$ in Eq.\eqref{equation:normalizing-factor-HS}.
For $S_0,S_1 \in \SymHS(\Hcal)_{< I}$, we have $A \in \Sym(\Hcal)\cap \HS(\Hcal)$.
By Lemmas \ref{lemma:A-alpha-I+A-HS-Tr} and \ref{lemma:I+A-power-Tr}, with $A \in \Sym(\Hcal)\cap \HS(\Hcal)$ and $I+A > 0$, we have
$(1-\alpha)(I+A)^{-\alpha} + \alpha (I+A)^{1-\alpha}\in \PC_1(\Hcal)$ and $\det[(1-\alpha)(I+A)^{-\alpha} + \alpha (I+A)^{1-\alpha}]^{-1/2} > 0$.
Hence Eq.\eqref{equation:normalizing-factor-HS-1} is valid for $S_0,S_1 \in  \SymHS(\Hcal)_{< I}$.
\qed
\end{proof}

\begin{proof}
	[\textbf{of Theorem \ref{theorem:geometric-interpolation-equivalent-Gaussian-Hilbert-space} - The case $S_i \in \SymTr(\Hcal)_{< I}$, $i=0,1$}]
	By Theorem \ref{theorem:radon-nikodym-infinite}, for $S_i \in \SymTr(\Hcal)_{< I}$, $i=0,1$,
	\begin{align*}
		p_i(x)= \frac{d\mu_i}{d\mu_{*}}(x) &= [\det(I-S_i)]^{-1/2}
		\\
		& \quad \times \exp\left\{-\frac{1}{2}\la C_{*}^{-1/2}(x-m_{*}), S_i(I-S_i)^{-1}C_{*}^{-1/2}(x-m_{*})\ra \right\}
		\nonumber
		\\
		&\quad \times \exp(\la C_{*}^{-1/2}(x-m_{*}), (I-S_i)^{-1}C_{*}^{-1/2}(m_i - m_{*})\ra)
		\nonumber
		\\
		&\quad \times \exp\left[-\frac{1}{2}||(I-S_i)^{-1/2}C_{*}^{-1/2}(m_i - m_{*})||^2\right].
		\nonumber
	\end{align*}
	Thus for $\alpha \in [0,1]$,
	\begin{align*}
		&p_0^{1-\alpha}(x)p_1^{\alpha}(x) = 
		[\det(I-S_0)]^{-\frac{1-\alpha}{2}}[\det(I-S_1)]^{-\frac{\alpha}{2}}
		\\
		& \quad \times \exp\left\{-\frac{1}{2}\la C_{*}^{-1/2}(x-m_{*}), [(1-\alpha)S_0(I-S_0)^{-1} + \alpha S_1(I-S_1)^{-1}]\right.
		\\
		&\quad \quad \quad \quad \quad \left.C_{*}^{-1/2}(x-m_{*})\ra \right\}
		\\
		& \quad \times \exp(\la C_{*}^{-1/2}(x-m_{*}), (1-\alpha)(I-S_0)^{-1}C_{*}^{-1/2}(m_0 - m_{*})\ra)
		\\
		& \quad \times  \exp(\la C_{*}^{-1/2}(x-m_{*}), \alpha(I-S_1)^{-1}C_{*}^{-1/2}(m_1 - m_{*})\ra)
		\\
		&\quad \times \exp\left[-\frac{1-\alpha}{2}||(I-S_0)^{-1/2}C_{*}^{-1/2}(m_0 - m_{*})||^2\right]
		\\
		&\quad \times \exp\left[-\frac{\alpha}{2}||(I-S_1)^{-1/2}C_{*}^{-1/2}(m_1 - m_{*})||^2\right].
	\end{align*}
	By Proposition \ref{proposition:normalizing factor}, the normalizing factor is given by
	\begin{align*}
	&Z_{\alpha}^G(p_0:p_1) = \int_{\Hcal}p_0^{1-\alpha}(x)p_1^{\alpha}(x) d\mu_{*}(x)
\\
& =  [\det(I-S_0)]^{-\frac{1-\alpha}{2}}[\det(I-S_1)]^{-\frac{\alpha}{2}}
\\
& \quad \times \exp\left[-\frac{1-\alpha}{2}||(I-S_0)^{-1/2}C_{*}^{-1/2}(m_0 - m_{*})||^2\right]
\\
& \quad \times 
\exp\left[-\frac{\alpha}{2}||(I-S_1)^{-1/2}C_{*}^{-1/2}(m_1 - m_{*})||^2\right]
\\
&\quad \times \det(I-S_{\alpha})^{1/2}\exp\left(\frac{1}{2}||(I-S_{\alpha})^{-1/2}C_{*}^{-1/2}(m_{\alpha} - m_{*})||^2\right).
	\end{align*}
	By definition, the geometric interpolation between $\mu_0$ and $\mu_1$, for $\alpha \in [0,1]$, is
	\begin{align*}
	&(p_0p_1)^G_{\alpha}(x) = \frac{p_0^{1-\alpha}(x)p_1^{\alpha}(x)}{Z_{\alpha}^G(p_0:p_1) }
	\\
	& = \det(I-S_{\alpha})^{-1/2}\exp\left(-\frac{1}{2}||(I-S_{\alpha})^{-1/2}C_{*}^{-1/2}(m_{\alpha} - m_{*})||^2\right)
	\\
	&\quad \times \exp\left\{-\frac{1}{2}\la C_{*}^{-1/2}(x-m_{*}), S_{\alpha}(I-S_{\alpha})^{-1}C_{*}^{-1/2}(x-m_{*})\ra \right\}
	\\
	&\quad \times \exp(\la C_{*}^{-1/2}(x-m_{*}), (I-S_{\alpha})^{-1}C_{*}^{-1/2}(m_{\alpha} - m_{*})\ra)
	\\
	& 
	= \frac{d\mu_{\alpha}}{d\mu_{*}}(x),
	\\
	\end{align*}
	where $\mu_{\alpha} = \Ncal(m_{\alpha}, C_{\alpha}) \in \Gauss(\Hcal, \mu_{*})$.
	%
	%
	The corresponding covariance operator $C_{\alpha}$ is given by
	\begin{align*}
		C_{\alpha} = C_{*}^{1/2}(I-S_{\alpha})C_{*}^{1/2} = C_{*}^{1/2} [(1-\alpha)(I-S_0)^{-1} + \alpha (I-S_1)^{-1}]^{-1}C_{*}^{1/2}.
	\end{align*}
	\qed
\end{proof}

\begin{proposition}
\label{proposition:KL-log-Radon-Nikodym-convergence}
Let $\Xcal$ be a Polish space and $\Bsc(\Xcal)$ its Borel $\sigma$-algebra.
Let
$\mu, \nu \in \Pcal(\Xcal)$ be such that $\nu << \mu$. Then
\begin{align}
\KL(\nu||\mu) \leq \left\|\log\left(\frac{d\nu}{d\mu}\right) \right\|_{\Lcal^2(\Hcal,\nu)}.
\end{align}
Consequently,
\begin{align}
\left\|\log\left(\frac{d\nu}{d\mu}\right) \right\|_{\Lcal^2(\Hcal,\nu)} = 0 \equivalent \KL(\nu||\mu ) = 0 \equivalent \nu = \mu.
\end{align}
In particular, let $\{\mu_k\}_{k \in \Nbb} \in \Pcal(\Xcal)$ be such that $\mu_k \sim \mu$ $\forall k \in \Nbb$. Then
\begin{align}
\lim_{k \approach \infty}\left\|\log\left(\frac{d\mu_k}{d\mu}\right) \right\|_{\Lcal^2(\Hcal,\mu)} = 0 \imply \lim_{k \approach \infty}\KL(\mu||\mu_k) = 0.
\end{align}
\end{proposition}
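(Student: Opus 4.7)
The plan is elementary: all three assertions follow from the non-negativity of the Kullback--Leibler divergence (Gibbs' inequality, a consequence of Jensen's inequality applied to the strictly convex function $-\log$) combined with the embedding $\Lcal^2(\Xcal,\nu) \hookrightarrow \Lcal^1(\Xcal,\nu)$ valid for any probability measure $\nu$ (Cauchy--Schwarz). No infinite-dimensional machinery from the preceding sections is required.

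First I would establish the main inequality. Set $f = \log\left(\frac{d\nu}{d\mu}\right)$, which is $\nu$-a.e.\ defined since $\nu \ll \mu$. If $\|f\|_{\Lcal^2(\Xcal,\nu)} = \infty$ the bound is trivial, so assume it is finite; then Cauchy--Schwarz applied to $|f|\cdot 1$ on the probability space $(\Xcal,\nu)$ yields $f \in \Lcal^1(\Xcal,\nu)$ with $\|f\|_{\Lcal^1(\Xcal,\nu)} \leq \|f\|_{\Lcal^2(\Xcal,\nu)} \cdot \|1\|_{\Lcal^2(\Xcal,\nu)} = \|f\|_{\Lcal^2(\Xcal,\nu)}$. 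By definition $\KL(\nu\|\mu) = \int_{\Xcal} f\,d\nu$, and by Gibbs' inequality $\KL(\nu\|\mu) \geq 0$, whence
\[
\KL(\nu\|\mu) = \Big|\int_{\Xcal} f\,d\nu\Big| \leq \int_{\Xcal}|f|\,d\nu = \|f\|_{\Lcal^1(\Xcal,\nu)} \leq \|f\|_{\Lcal^2(\Xcal,\nu)}.
\]

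Next I would address the equivalence chain. The standard strict form of Gibbs' inequality gives $\KL(\nu\|\mu) = 0 \equivalent \nu = \mu$. From the bound just proved, $\|f\|_{\Lcal^2(\Xcal,\nu)} = 0 \imply \KL(\nu\|\mu) = 0 \imply \nu = \mu$; conversely, $\nu = \mu$ gives $f = 0$ $\mu$-a.e., so $\|f\|_{\Lcal^2(\Xcal,\nu)} = 0$, closing the cycle.

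Finally, for the convergence statement, the hypothesis $\mu_k \sim \mu$ gives $\mu \ll \mu_k$ with $\frac{d\mu}{d\mu_k} = \left(\frac{d\mu_k}{d\mu}\right)^{-1}$, hence $\log\frac{d\mu}{d\mu_k} = -\log\frac{d\mu_k}{d\mu}$ and the two log-densities have the same $\Lcal^2(\Xcal,\mu)$-norm. Applying the main inequality with the roles of $\nu$ and $\mu$ interchanged (take $\nu := \mu$, $\mu := \mu_k$) yields
\[
\KL(\mu\|\mu_k) \leq \left\|\log\frac{d\mu}{d\mu_k}\right\|_{\Lcal^2(\Xcal,\mu)} = \left\|\log\frac{d\mu_k}{d\mu}\right\|_{\Lcal^2(\Xcal,\mu)},
\]
and the right-hand side tends to zero by assumption. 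I do not foresee any serious obstacle; the only mild subtlety is ensuring that $f \in \Lcal^1(\Xcal,\nu)$ so the integral defining $\KL$ is well-posed, which is exactly what the $\Lcal^2 \hookrightarrow \Lcal^1$ inclusion on a probability space provides.
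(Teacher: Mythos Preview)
Your proof is correct and follows essentially the same approach as the paper: both apply Cauchy--Schwarz on the probability space $(\Xcal,\nu)$ to bound $\int f\,d\nu$ by $\|f\|_{\Lcal^2(\nu)}$, the paper writing this directly as $[\KL(\nu\|\mu)]^2 \leq \int f^2\,d\nu$ while you route through $\|f\|_{\Lcal^1}$. Your treatment is in fact more explicit than the paper's, which leaves the equivalence chain and the convergence consequence to the reader.
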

\begin{proof}
Since $\nu(\Xcal) =1$, by the Cauchy-Schwarz inequality,
\begin{align*}
[\KL(\nu||\mu)]^2 &= \left[\int_{\Xcal}\log\left(\frac{d\nu}{d\mu}\right)d\nu(x)\right]^2
\leq \int_{\Xcal}\left[\log\left(\frac{d\nu}{d\mu}\right)\right]^2d\nu(x)
\\
& = \left\|\log\left(\frac{d\nu}{d\mu}\right) \right\|^2_{\Lcal^2(\Hcal,\nu)},
\end{align*}
from which the desired result follows.\qed
\end{proof}

\begin{proposition}
	[\cite{Minh:2022KullbackGaussian}, Theorem 11]
	\label{proposition:limit-logdet-A-B-alpha}
	Let $0 \leq \alpha \leq 1$.
	Let $A\in \Sym(\Hcal) \cap \HS(\Hcal)$, $I+A > 0$. Then
	\begin{align}
		0 &\leq \log\det[(1-\alpha)(I+A)^{-\alpha} + \alpha (I+A)^{1-\alpha}] 
		\nonumber
		\\
		&\leq \alpha(1-\alpha)||(I+A)^{-1}||\;||A||^2_{\HS}.
	\end{align}
	Let $B\in \Sym(\Hcal) \cap \HS(\Hcal)$, $I+B > 0$. Then
	\begin{align}
		&\left|\log\det[(1-\alpha)(I+A)^{-\alpha} + \alpha (I+A)^{1-\alpha}] \right.
		\nonumber
		\\
		&\left. -\log\det[(1-\alpha)(I+B)^{-\alpha} + \alpha (I+B)^{1-\alpha}]]\right| 
		\\
		&\leq \alpha (1-\alpha)||(I+A)^{-1}||\;||(I+B)^{-1}||
		\nonumber
		\\
		& \quad \times [||A||_{\HS} + ||B||_{\HS} + ||A||_{\HS}||B||_{\HS}]||A-B||_{\HS}.
	\end{align}
	Consequently, let $\{A_k\}_{k\in \Nbb} \in \Sym(\Hcal) \cap \HS(\Hcal)$, $I+A_k > 0$ $\forall k \in \Nbb$, be such that
	$\lim_{k \approach \infty}||A_k - A||_{\HS} = 0$. Then
	\begin{align}
		&\lim_{k \approach \infty}\log\det[(1-\alpha)(I+A_k)^{-\alpha} + \alpha (I+A_k)^{1-\alpha}]
		\nonumber
		\\
		&=
		\log\det[(1-\alpha)(I+A)^{-\alpha} + \alpha (I+A)^{1-\alpha}].
	\end{align}
	Let $\beta \in \R$ be fixed but arbitrary. Then
	\begin{align}
		&\lim_{k \approach \infty}\det[(1-\alpha)(I+A_k)^{-\alpha} + \alpha (I+A_k)^{1-\alpha}]^{\beta}
		\nonumber
		\\
		&=
		\det[(1-\alpha)(I+A)^{-\alpha} + \alpha (I+A)^{1-\alpha}]^{\beta}.
	\end{align}
\end{proposition}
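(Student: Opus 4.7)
\textbf{Proof plan for Proposition \ref{proposition:limit-logdet-A-B-alpha}.} The plan is to reduce everything to a scalar spectral computation. The key algebraic identity is
\begin{align*}
(1-\alpha)(I+A)^{-\alpha} + \alpha (I+A)^{1-\alpha} = (I+A)^{-\alpha}\bigl[(1-\alpha)I + \alpha(I+A)\bigr] = (I+A)^{-\alpha}(I+\alpha A),
\end{align*}
valid because $(I+A)^{-\alpha}$ and $(I+A)^{1-\alpha}$ share the spectral decomposition of $A$. By Lemma \ref{lemma:A-alpha-I+A-HS-Tr}, the left-hand side lies in $\PC_1(\Hcal)$, so the classical Fredholm determinant applies, and if $\{a_k\}_{k\in\Nbb}$ are the eigenvalues of $A$ (so $\{1+a_k\}$ those of $I+A$, with $1+a_k>0$), then
\begin{align*}
\log\det\bigl[(1-\alpha)(I+A)^{-\alpha} + \alpha(I+A)^{1-\alpha}\bigr] = \sum_{k=1}^{\infty}\Bigl[\log(1+\alpha a_k) - \alpha\log(1+a_k)\Bigr] = \sum_{k=1}^{\infty}h_\alpha(a_k),
\end{align*}
where $h_\alpha(a) = \log(1+\alpha a) - \alpha\log(1+a)$. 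I would verify directly that the series converges absolutely from the bound produced below, since $A\in\HS(\Hcal)$.

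The first step is the scalar two-sided bound $0 \leq h_\alpha(a) \leq \alpha(1-\alpha)\frac{a^2}{1+a}$ for $a>-1$ and $\alpha\in[0,1]$. Both sides vanish at $\alpha=0$ and $\alpha=1$, and $h_\alpha''(\alpha) = -a^2/(1+\alpha a)^2$. Representing $h_\alpha$ via the Green's function of $\frac{d^2}{d\alpha^2}$ on $[0,1]$ with zero boundary values,
\begin{align*}
h_\alpha(a) = \int_0^1 G(\alpha,s)\,\frac{a^2}{(1+sa)^2}\,ds, \qquad G(\alpha,s) = \min(\alpha,s) - \alpha s \geq 0,
\end{align*}
gives $h_\alpha(a)\geq 0$ (lower bound). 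Using $G(\alpha,s)\leq \alpha(1-\alpha)$ and evaluating $\int_0^1 a^2(1+sa)^{-2}\,ds = a^2/(1+a)$ produces the upper bound. Summing over $k$ and using $\max_k (1+a_k)^{-1} = \|(I+A)^{-1}\|$, $\sum_k a_k^2 = \|A\|_{\HS}^2$ yields part (i) of the proposition.

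For the Lipschitz-type estimate (part ii), I would parametrize by $A_t = (1-t)B + tA$, $t\in[0,1]$, write the difference as $\int_0^1 \frac{d}{dt}\log\det[\,\cdot\,](A_t)\,dt$, and use the identity from the first paragraph to express the integrand via $\log\det(I+\alpha A_t) - \alpha\log\det(I+A_t)$ (with $\det_2$ regularization as needed). The derivative in $t$ can be evaluated using $\frac{d}{dt}\log\det(I+\alpha A_t) = \alpha\,\trace[(I+\alpha A_t)^{-1}(A-B)]$ and similarly for $I+A_t$, leading after rearrangement to a trace-class expression that factors through $\alpha(1-\alpha)$ and the resolvent bounds $\|(I+A)^{-1}\|$, $\|(I+B)^{-1}\|$. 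The main obstacle here is making the differentiation rigorous in the Hilbert-Schmidt (rather than trace-class) setting; I would handle this by first truncating $A,B$ to finite-rank $P_NAP_N, P_NBP_N$, applying the classical Jacobi identity for $\log\det$, and then passing to the limit using Lemma \ref{lemma:trace-class-orthogonal-projection} and the continuity of $\det_2$ in the HS norm. The Hilbert-Schmidt Lipschitz bounds on $A\mapsto (I+A)^{-1}$ (standard resolvent identity) and the triangle inequality for the mixed term $\|AB\|_{\HS}\leq\|A\|_{\HS}\|B\|_{\HS}$ (or, more sharply, $\|AB\|_{\HS}\leq \|A\|\,\|B\|_{\HS}$) give the stated factor $[\|A\|_{\HS}+\|B\|_{\HS}+\|A\|_{\HS}\|B\|_{\HS}]$.

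The convergence statements in part (iii) are then immediate: the HS-Lipschitz bound shows that $\log\det[(1-\alpha)(I+A_k)^{-\alpha}+\alpha(I+A_k)^{1-\alpha}]\to\log\det[(1-\alpha)(I+A)^{-\alpha}+\alpha(I+A)^{1-\alpha}]$ whenever $\|A_k-A\|_{\HS}\to 0$ (noting that $\|(I+A_k)^{-1}\|$ stays bounded for large $k$ by a standard resolvent argument since $I+A>0$ and $A_k\to A$ in operator norm), and the $\beta$-power case follows from $\det[\,\cdot\,]^\beta = \exp(\beta\log\det[\,\cdot\,])$ and the continuity of the exponential. I expect the operator-Lipschitz estimate of part (ii) to be the only genuinely delicate step; parts (i) and (iii) are essentially bookkeeping once the scalar Green's function bound and the finite-rank approximation are in place.
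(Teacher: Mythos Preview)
The paper does not prove this proposition; it is quoted from \cite{Minh:2022KullbackGaussian}, Theorem 11, and used as a black box. So there is no ``paper's own proof'' to compare against, and your proposal should be read as an independent argument.

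Your approach is sound. The spectral factorization $(1-\alpha)(I+A)^{-\alpha}+\alpha(I+A)^{1-\alpha}=(I+A)^{-\alpha}(I+\alpha A)$ and the resulting scalar sum $\sum_k h_\alpha(a_k)$ with $h_\alpha(a)=\log(1+\alpha a)-\alpha\log(1+a)$ are exactly the right reduction, and the Green's function representation of $h_\alpha$ gives part (i) cleanly. It is worth recording the equivalent identity
\[
F(A)\;=\;\log\dettwo(I+\alpha A)\;-\;\alpha\log\dettwo(I+A),
\]
obtained from $\sum_k h_\alpha(a_k)$ by adding and subtracting $\alpha a_k$; this makes sense directly for $A\in\HS(\Hcal)$ and avoids the finite-rank truncation you mention.

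For part (ii), your interpolation $A_t=(1-t)B+tA$ works: the integrand $\frac{d}{dt}F(A_t)=\alpha(1-\alpha)\trace\bigl[(I+\alpha A_t)^{-1}A_t(I+A_t)^{-1}(A-B)\bigr]$ is already a well-defined trace (product of two Hilbert--Schmidt factors and two bounded resolvents), so no truncation is needed there either. One caveat: the natural resolvent bound along the path gives $\|(I+\alpha A_t)^{-1}\|\,\|(I+A_t)^{-1}\|\le[\max(\|(I+A)^{-1}\|,\|(I+B)^{-1}\|)]^2$, not the product $\|(I+A)^{-1}\|\,\|(I+B)^{-1}\|$ appearing in the stated inequality, and $\|A_t\|_{\HS}\le\|A\|_{\HS}+\|B\|_{\HS}$ does not produce the extra $\|A\|_{\HS}\|B\|_{\HS}$ term. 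So your argument yields a Lipschitz bound with a slightly different (but comparable, since both resolvent norms are $\ge 1$) constant. This is harmless for part (iii) and for every use the present paper makes of the proposition; if you need the exact constant as stated, you would have to follow the specific estimates in \cite{Minh:2022KullbackGaussian}. Part (iii) follows from any such Lipschitz bound exactly as you say.
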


The following is a special case of Theorem 33 in \cite{Minh:2019AlphaBeta}.

\begin{proposition}
	[\cite{Minh:2019AlphaBeta}, Theorem 33]
	\label{proposition:limit-(I+A)-power-r}.
	Let $r \in \R$ be fixed but arbitrary.
	Let $A,\{A_k\}_{k\in \Nbb} \in \Sym(\Hcal) \cap \HS(\Hcal)$ be such that $I + A > 0$, $I+A_k > $ $\forall k \in \Nbb$. Assume that
	$\lim_{k \approach \infty}||A_k - A||_{\HS} = 0$. Then
	\begin{align}
		\lim_{k \approach \infty}||(I+A_k)^r - (I+A)^r||_{\HS} = 0.
	\end{align}
\end{proposition}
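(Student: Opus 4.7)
The plan is to reduce the statement to two special cases after first establishing a uniform positive spectral lower bound on $\{I+A_k\}$. Since HS convergence implies operator norm convergence, $||A_k - A|| \to 0$ and $\{||A_k||_{\HS}\}$ is bounded, say by $M$. The operator $I+A$ is a compact perturbation of the identity with $I+A > 0$, hence its spectrum consists of positive eigenvalues accumulating at $1$, and its smallest eigenvalue $c_0 = \min\sigma(I+A)$ is strictly positive. For all sufficiently large $k$, $I+A_k \geq (c_0/2) I$ by norm perturbation; combined with $I+A_k > 0$ for the remaining finitely many indices, there is a uniform constant $c > 0$ with $I+A_k \geq c I$ for all $k$ and $I+A \geq c I$.

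For $r = n \in \Nbb$, the telescoping identity
\begin{align*}
(I+A_k)^n - (I+A)^n = \sum_{j=0}^{n-1}(I+A_k)^j (A_k - A)(I+A)^{n-1-j}
\end{align*}
together with the uniform operator bound $(1+M)^{n-1}$ gives $||(I+A_k)^n - (I+A)^n||_{\HS} \leq n(1+M)^{n-1}||A_k - A||_{\HS} \to 0$. For fractional $s \in (0,1)$, I would invoke the Balakrishnan integral representation $X^s = \frac{\sin(\pi s)}{\pi}\int_0^{\infty} \lambda^{s-1} X(X+\lambda I)^{-1}\, d\lambda$ valid for strictly positive $X$. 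Applied to $X = I+A_k$ and $X = I+A$, and using $X(X+\lambda I)^{-1} = I - \lambda(X+\lambda I)^{-1}$ together with the resolvent identity, this yields
\begin{align*}
(I+A_k)^s - (I+A)^s = \frac{\sin(\pi s)}{\pi}\int_0^{\infty} \lambda^{s} (I+A+\lambda I)^{-1}(A_k - A)(I+A_k+\lambda I)^{-1}\, d\lambda.
\end{align*}
Taking HS norms inside the integral and using $||(I+A+\lambda I)^{-1}||, ||(I+A_k+\lambda I)^{-1}|| \leq 1/(c+\lambda)$ produces the scalar majorant $\lambda^s/(c+\lambda)^2$, which is integrable on $(0,\infty)$ for $0 < s < 1$; hence $||(I+A_k)^s - (I+A)^s||_{\HS} \leq C_{s,c}||A_k - A||_{\HS} \to 0$.

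Any $r \geq 0$ is then handled by writing $r = n + s$ and using the decomposition $(I+A_k)^r = (I+A_k)^n (I+A_k)^s$ together with the joint continuity of operator multiplication in HS norm (one factor bounded in operator norm, the other converging in HS norm). For $r < 0$, I would set $B_k = (I+A_k)^{-1} - I$ and $B = (I+A)^{-1} - I$. The resolvent identity $B_k - B = (I+A_k)^{-1}(A - A_k)(I+A)^{-1}$ gives $B_k, B \in \Sym(\Hcal)\cap\HS(\Hcal)$ with $||B_k - B||_{\HS} \to 0$, while $I+B_k = (I+A_k)^{-1} > 0$ and $I+B > 0$. Then $(I+A)^r = (I+B)^{-r}$ reduces matters to the positive-exponent case just established.

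The main obstacle is the fractional-exponent step: carrying out the integral representation in the Bochner sense for operator-valued functions and justifying passing the HS norm under the improper integral. Both rely crucially on the uniform positive lower bound $c > 0$, without which the majorant $\lambda^s/(c+\lambda)^2$ would fail to be integrable near $\lambda = 0$. Once this bound is secured, the remainder reduces to routine resolvent-identity manipulations and standard HS-norm estimates.
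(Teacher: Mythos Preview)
The paper does not supply its own proof of this proposition; it is quoted verbatim as a special case of Theorem 33 in \cite{Minh:2019AlphaBeta}, with no argument given. So there is nothing in the paper to compare against at the level of proof strategy.

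Your self-contained argument is correct. The three ingredients --- (i) a uniform spectral lower bound $I+A_k \geq cI$ obtained by combining norm perturbation for large $k$ with compactness of $A_k$ for the finitely many remaining indices, (ii) telescoping for integer exponents, and (iii) the Balakrishnan representation plus the resolvent identity for fractional exponents --- assemble cleanly, and the reduction of $r<0$ to $r>0$ via $B_k = (I+A_k)^{-1}-I = -A_k(I+A_k)^{-1}$ is sound since $B_k \in \Sym(\Hcal)\cap\HS(\Hcal)$ with the required convergence and positivity. The one point you flag as an obstacle --- justifying the HS-norm estimate under the improper integral --- is indeed routine once the uniform lower bound $c>0$ is in place: the integrand $\lambda \mapsto (I+A+\lambda I)^{-1}(A_k-A)(I+A_k+\lambda I)^{-1}$ is continuous into $\HS(\Hcal)$ and dominated in HS norm by $\lambda^s/(c+\lambda)^2 \cdot ||A_k-A||_{\HS}$, which is integrable on $(0,\infty)$ for $0<s<1$, so Bochner integrability in $\HS(\Hcal)$ follows directly. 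Compared with simply citing the external reference as the paper does, your approach has the advantage of being explicit and elementary, relying only on the resolvent calculus and standard HS-norm inequalities.
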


The following result then follows from Proposition \ref{proposition:limit-(I+A)-power-r} by repeated application of the triangle inequality.

\begin{corollary}
	\label{corollary:limit-product-I+A-I+B}
	Let $A,B,\{A_k\}_{k \in \Nbb}, \{B_k\}_{k \in \Nbb} \in \Sym(\Hcal) \cap \HS(\Hcal)$ be such that $I + A > 0$, $I+B > 0$, $I+A_k > $, $I+B_k > 0$ $\forall k \in \Nbb$. Assume that
	$\lim_{k \approach \infty}||A_k - A||_{\HS} = 0$, $\lim_{k \approach \infty}||B_k - B||_{\HS} = 0$. Then
	\begin{align}
		&\lim_{k \approach \infty}||(I+B_k)^{-1/2}(I+A_k)(I+B_k)^{-1/2} 
		\nonumber
		\\
		& \quad \quad \quad - (I+B)^{-1/2}(I+A)(I+B)^{-1/2}||_{\HS} 
		\\
		&= \lim_{k \approach \infty}||(I+B_k)^{-1/2}(A_k-B_k)(I+B_k)^{-1/2} 
		\nonumber
		\\
		&\quad \quad \quad - (I+B)^{-1/2}(A-B)(I+B)^{-1/2}||_{\HS}
		= 0.
	\end{align}
\end{corollary}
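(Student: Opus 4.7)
The plan is to reduce both claimed limits to a single quantity and then dispatch it with Proposition \ref{proposition:limit-(I+A)-power-r} applied at $r = -1/2$, together with standard Hilbert--Schmidt/operator-norm bookkeeping.

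First, I would observe the algebraic identity
\begin{align*}
(I+B_k)^{-1/2}(I+A_k)(I+B_k)^{-1/2}
&= (I+B_k)^{-1/2}[(I+B_k)+(A_k-B_k)](I+B_k)^{-1/2} \\
&= I + (I+B_k)^{-1/2}(A_k-B_k)(I+B_k)^{-1/2},
\end{align*}
and similarly $(I+B)^{-1/2}(I+A)(I+B)^{-1/2} = I + (I+B)^{-1/2}(A-B)(I+B)^{-1/2}$. Subtracting, the $I$'s cancel and the two operators inside the $||\cdot||_{\HS}$ norms in the statement are \emph{identically equal}. Hence it suffices to prove that the first limit is zero.

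For that, I would write the standard telescoping decomposition
\begin{align*}
& (I+B_k)^{-1/2}(I+A_k)(I+B_k)^{-1/2} - (I+B)^{-1/2}(I+A)(I+B)^{-1/2} \\
&= \bigl[(I+B_k)^{-1/2}-(I+B)^{-1/2}\bigr](I+A_k)(I+B_k)^{-1/2} \\
&\quad + (I+B)^{-1/2}(A_k-A)(I+B_k)^{-1/2} \\
&\quad + (I+B)^{-1/2}(I+A)\bigl[(I+B_k)^{-1/2}-(I+B)^{-1/2}\bigr],
\end{align*}
take $||\cdot||_{\HS}$, and use the ideal property $||XYZ||_{\HS} \le ||X||\;||Y||_{\HS}\;||Z||$, placing the HS norm on the factor that is genuinely Hilbert--Schmidt. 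By Proposition \ref{proposition:limit-(I+A)-power-r} with $r = -1/2$, $||(I+B_k)^{-1/2}-(I+B)^{-1/2}||_{\HS} \to 0$; by hypothesis $||A_k - A||_{\HS} \to 0$. Provided the remaining operator-norm factors $||(I+A_k)||$, $||(I+A)||$, $||(I+B_k)^{-1/2}||$, $||(I+B)^{-1/2}||$ are uniformly bounded in $k$, all three terms vanish.

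The only point requiring care, and the closest thing to an obstacle, is uniform boundedness of $||(I+B_k)^{-1/2}||$. Since $B$ is compact self-adjoint and $I+B>0$, the eigenvalues $\{1+\lambda_j(B)\}$ are bounded below by some $c_B>0$. HS-convergence $B_k \to B$ implies operator-norm convergence, so for $k$ large enough $||B_k - B|| < c_B/2$, and the variational characterization gives $\la x,(I+B_k)x\ra \ge c_B/2$ for all unit $x$; hence $||(I+B_k)^{-1/2}|| \le \sqrt{2/c_B}$ uniformly for large $k$. Uniform boundedness of $||(I+A_k)||$ is immediate from $||A_k|| \le ||A_k||_{\HS}$ being bounded. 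With these uniform bounds in hand the triangle-inequality estimate closes the argument.
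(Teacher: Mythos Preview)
Your proposal is correct and follows essentially the same approach as the paper, which merely states that the result ``follows from Proposition \ref{proposition:limit-(I+A)-power-r} by repeated application of the triangle inequality.'' Your proof is simply a fully detailed version of this one-line sketch: the algebraic identity showing the two displayed norms coincide, the three-term telescoping split, and the explicit verification of uniform boundedness of $\|(I+B_k)^{-1/2}\|$ are exactly the ingredients that ``repeated application of the triangle inequality'' entails.
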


\begin{corollary}
	\label{corollary:limit-det-S0-S1}
	Let $0 \leq \alpha \leq 1$.
	Let $\beta \in \R$ be fixed but arbitrary. Let $S_0, S_1$, $\{S_0^k\}_{k\in \Nbb}$, $\{S_1^k\}_{k \in \Nbb} \in \SymHS(\Hcal)_{< I}$.
	Assume that $\lim_{k \approach \infty}||S_0^k - S_0||_{\HS} = 0$, $\lim_{k \approach \infty}||S_1^k - S_1||_{\HS} = 0$.
	Let $A= (I-S_1)^{-1/2}(S_1-S_0)(I-S_1)^{-1/2}$, $A^k = (I-S_1^k)^{-1/2}(S_1^k-S_0^k)(I-S^k_1)^{-1/2}$.
	Then
	\begin{align}
		&\lim_{k \approach \infty}\det[(1-\alpha)(I+A^k)^{-\alpha} + \alpha (I+A^k)^{1-\alpha}]^{\beta}
		\nonumber
		\\
		& = \det[(1-\alpha)(I+A)^{-\alpha} + \alpha (I+A)^{1-\alpha}]^{\beta}.
	\end{align}
\end{corollary}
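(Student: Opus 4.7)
The plan is to reduce the claim to the continuity statement in Proposition \ref{proposition:limit-logdet-A-B-alpha}, which directly asserts that the map $A \mapsto \det[(1-\alpha)(I+A)^{-\alpha} + \alpha(I+A)^{1-\alpha}]^{\beta}$ is continuous in the Hilbert--Schmidt norm on the set $\{A \in \Sym(\Hcal)\cap \HS(\Hcal) : I+A > 0\}$. Hence the whole task collapses to verifying two things: (i) that $I+A > 0$ and $I+A^k > 0$ for all $k$, and (ii) that $\lim_{k\to\infty}\|A^k - A\|_{\HS} = 0$.

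For (i), note the algebraic identity $I + A = (I-S_1)^{-1/2}\bigl[(I-S_1) + (S_1 - S_0)\bigr](I-S_1)^{-1/2} = (I-S_1)^{-1/2}(I-S_0)(I-S_1)^{-1/2}$, which is strictly positive since both $I-S_0$ and $I-S_1$ are. The same identity applied to $S_0^k, S_1^k$ gives $I + A^k > 0$, and also shows that $A \in \Sym(\Hcal)\cap\HS(\Hcal)$ since $S_1 - S_0 \in \Sym(\Hcal)\cap\HS(\Hcal)$ and $(I-S_1)^{-1/2}$ is bounded.

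For (ii), I will invoke Corollary \ref{corollary:limit-product-I+A-I+B} with the substitution $A_k \leftarrow -S_0^k$, $B_k \leftarrow -S_1^k$, $A \leftarrow -S_0$, $B \leftarrow -S_1$. The second form of that corollary yields precisely $\lim_{k\to\infty}\|(I-S_1^k)^{-1/2}(S_1^k - S_0^k)(I-S_1^k)^{-1/2} - (I-S_1)^{-1/2}(S_1-S_0)(I-S_1)^{-1/2}\|_{\HS} = 0$, which is exactly $\|A^k - A\|_{\HS} \to 0$.

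With (i) and (ii) in hand, Proposition \ref{proposition:limit-logdet-A-B-alpha} gives the convergence of $\log\det[(1-\alpha)(I+A^k)^{-\alpha} + \alpha(I+A^k)^{1-\alpha}]$ to $\log\det[(1-\alpha)(I+A)^{-\alpha} + \alpha(I+A)^{1-\alpha}]$, and continuity of $x \mapsto \exp(\beta x)$ then yields the stated convergence of the $\beta$-th power of the determinant. The only mildly subtle point is checking the positivity $I+A > 0$ and $I+A^k > 0$ so that the expressions $(I+A)^{-\alpha}$ and $(I+A)^{1-\alpha}$ are defined; once the factorization $(I-S_1)^{-1/2}(I-S_0)(I-S_1)^{-1/2}$ is in view, this is immediate, and the remainder of the argument is a mechanical chaining of previously established continuity lemmas.
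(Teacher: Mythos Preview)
Your proof is correct and follows essentially the same route as the paper: both invoke Corollary \ref{corollary:limit-product-I+A-I+B} (with the substitution $A_k \leftarrow -S_0^k$, $B_k \leftarrow -S_1^k$) to obtain $\|A^k - A\|_{\HS} \to 0$, and then apply Proposition \ref{proposition:limit-logdet-A-B-alpha} to conclude. Your explicit verification of $I+A > 0$ via the factorization $(I-S_1)^{-1/2}(I-S_0)(I-S_1)^{-1/2}$ is a useful clarification; the paper instead cites Lemmas \ref{lemma:A-alpha-I+A-HS-Tr} and \ref{lemma:I+A-power-Tr} to record that the determinant expressions are well-defined in $\PC_1(\Hcal)$, but this is already implicit in the hypotheses of Proposition \ref{proposition:limit-logdet-A-B-alpha}.
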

\begin{proof}
	By Lemma \ref{lemma:A-alpha-I+A-HS-Tr}, we have $(1-\alpha)(I+A)^{-\alpha} + \alpha (I+A)^{1-\alpha} \in \PC_1(\Hcal)$ 
	and $(1-\alpha)(I+A^k)^{-\alpha} + \alpha (I+A^k)^{1-\alpha} \in \PC_1(\Hcal)$ $\forall k \in \Nbb$.
	By Lemma \ref{lemma:I+A-power-Tr}, we have $\forall \beta \in \R$,  $[(1-\alpha)(I+A)^{-\alpha} + \alpha (I+A)^{1-\alpha}]^{\beta} \in \PC_1(\Hcal)$ 
	and $[(1-\alpha)(I+A^k)^{-\alpha} + \alpha (I+A^k)^{1-\alpha}]^{\beta} \in \PC_1(\Hcal)$ $\forall k \in \Nbb$.
	By Corollary \ref{corollary:limit-product-I+A-I+B},
	\begin{align*}
		\lim_{k \approach \infty}	||A^k - A||_{\HS} = 0.  
	\end{align*}
	Thus it follows from Proposition \ref{proposition:limit-logdet-A-B-alpha} that $\forall \beta \in \R$,
	\begin{align*}
		&\lim_{k \approach \infty}\det[(1-\alpha)(I+A^k)^{-\alpha} + \alpha (I+A^k)^{1-\alpha}]^{\beta}
		\\
		& = \det[(1-\alpha)(I+A)^{-\alpha} + \alpha (I+A)^{1-\alpha}]^{\beta}.
	\end{align*}
	This gives the desired limit.
	\qed
\end{proof}

\begin{proof}
	[\textbf{of Theorem \ref{theorem:geometric-interpolation-equivalent-Gaussian-Hilbert-space} - the general case
		$S_i \in \SymHS(\Hcal)_{<I}$, $i=0,1$}]

For $i=0,1$, let $\{\mu_i^k = \Ncal(m_i,C_i^k)\}_{k\in \Nbb} \in \Gauss(\Hcal, \mu_{*})$, be such that
$C_i^k = C_{*}^{1/2}(I-S_i^k)C_{*}^{1/2}$, $S_i^k \in \SymTr(\Hcal)_{< I}$, 
$\lim_{k \approach \infty}||S_i^k - S_i||_{\HS} = 0$. Let $p_i^k = \frac{d\mu_i^k}{d\mu_{*}}$. 
Let $\nu \in \Gauss(\Hcal,\mu_{*})$ be fixed but arbitrary. By Corollary \ref{corollary:log-Radon-Nikodym-L2-convergence}, with $p_i = \frac{d\mu_i}{d\mu_{*}}$,
\begin{align*}
\lim_{k \approach \infty}	||\log(p^k_i) - \log(p_i)||_{\Lcal^2(\Hcal,\nu)} = 0, \;\; i=0,1.
\end{align*}
It follows that for $\alpha \in [0,1]$, with the limits in the $\Lcal^2(\Hcal,\nu)$ sense,
\begin{align}
	p_0^{1-\alpha}(x)p_1^{\alpha}(x) &= \exp\left((1-\alpha)\log(p_0(x)) + \alpha \log(p_1(x))\right)
	\nonumber
	\\
	\equivalent \log[p_0^{1-\alpha}(x)p_1^{\alpha}(x)] &= (1-\alpha)\log(p_0(x)) + \alpha \log(p_1(x))
	\nonumber
	\\
	&= \lim_{k \approach \infty}[(1-\alpha)\log(p_0^k(x)) + \alpha \log(p_1^k(x))]
	\nonumber
	\\
	& = \lim_{k \approach \infty}\log[(p_0^k(x))^{1-\alpha}(p_1^k(x))^{\alpha}].
	\label{equation:proof-general-1}
\end{align}
Let $A^k = (I-S_1^k)^{-1/2}(S_1^k - S_0^k)(I-S_1^k)^{-1/2}$.
By Proposition \ref{proposition:normalizing factor},
\begin{align*}
	&Z_{\alpha}^G(p^k_0:p^k_1)
	= 
	\int_{\Hcal}(p^k_0)^{1-\alpha}(x)(p^k_1)^{\alpha}(x) \; d\mu_{*}(x)
	\\
	&=
\det[(1-\alpha)(I+A^k)^{-\alpha} + \alpha (I+A^k)^{1-\alpha}]^{-1/2}	
	\nonumber
	\\
	& \quad \times \exp\left[-\frac{1-\alpha}{2}||(I-S^k_0)^{-1/2}C_{*}^{-1/2}(m_0 - m_{*})||^2\right]
	\nonumber
	\\
	& \quad \times 
	\exp\left[-\frac{\alpha}{2}||(I-S^k_1)^{-1/2}C_{*}^{-1/2}(m_1 - m_{*})||^2\right]
	\nonumber
	\\
	& \quad \times \exp\left(\frac{1}{2}||(I-S^k_{\alpha})^{-1/2}C_{*}^{-1/2}(m_{\alpha} - m_{*})||^2\right).
\end{align*}
Let $A = (I-S_1)^{-1/2}(S_1 - S_0)(I-S_1)^{-1/2}$.
Define
\begin{align*}
	&Z
	=\det[(1-\alpha)(I+A)^{-\alpha} + \alpha (I+A)^{1-\alpha}]^{-1/2}	
	\\
	& \quad \times \exp\left[-\frac{1-\alpha}{2}||(I-S_0)^{-1/2}C_{*}^{-1/2}(m_0 - m_{*})||^2\right]
	\nonumber
	\\
	& \quad \times 
	\exp\left[-\frac{\alpha}{2}||(I-S_1)^{-1/2}C_{*}^{-1/2}(m_1 - m_{*})||^2\right]
	\nonumber
	\\
	& \quad \times \exp\left(\frac{1}{2}||(I-S_{\alpha})^{-1/2}C_{*}^{-1/2}(m_{\alpha} - m_{*})||^2\right).
\end{align*}
Since $\lim_{k \approach \infty}||S_i^k - S_i||_{\HS} = 0$, by Corollary \ref{corollary:limit-det-S0-S1},
\begin{align*}
	&\lim_{k \approach \infty}\det[(1-\alpha)(I+A^k)^{-\alpha} + \alpha (I+A^k)^{1-\alpha}]^{-1/2}
	\nonumber
	\\
	& = \det[(1-\alpha)(I+A)^{-\alpha} + \alpha (I+A)^{1-\alpha}]^{-1/2}.
\end{align*}
Thus it is straightforward to see that
\begin{align*}
	\lim_{k \approach \infty}Z_{\alpha}^G(p^k_0:p^k_1) = Z.
\end{align*}
It follows that, with the limit being in the $\Lcal^2(\Hcal, \nu)$ sense,
\begin{align}
	\log\left\{\frac{p_0^{1-\alpha}(x)p_1^{\alpha}(x)]}{Z}\right\} =  \lim_{k \approach \infty}\log\left\{\frac{(p_0^k(x))^{1-\alpha}(p_1^k(x))^{\alpha}}{Z_{\alpha}^G(p^k_0:p^k_1)}\right\}.
	\label{equation:proof-general-4}
\end{align}
On the other hand, 
from the first part of the theorem for the case $S_i \in \SymTr(\Hcal)_{< I}$, the geometric interpolation between $p_0^k$ and $p_1^k$ is 
\begin{align}
(p_0^kp_1^k)^G_{\alpha} = \frac{(p_0^k(x))^{1-\alpha}(p_1^k(x))^{\alpha}}{Z_{\alpha}^G(p^k_0:p^k_1)} = \frac{d\mu^k_{\alpha}}{d\mu_{*}}, \;\; \mu^k_{\alpha} = \Ncal(m_{\alpha},C^k_{\alpha}).
\label{equation:proof-general-2}
\end{align}
Here $C^k_{\alpha} = C_{*}^{1/2}(I-S_{\alpha}^k)C_{*}^{1/2}$, $S^k_{\alpha} = I - [(1-\alpha)(I-S^k_0)^{-1} + \alpha (I-S^k_1)^{-1}]^{-1} \in \SymTr(\Hcal)_{< I}$, $ k \in \Nbb$.
With $S_{\alpha} = I - [(1-\alpha)(I-S_0)^{-1} + \alpha (I-S_1)^{-1}]^{-1} \in \SymHS(\Hcal)_{< I}$ and $\lim_{k \approach \infty}||
S_i^k - S_i||_{\HS} = 0$, $i=0,1$, we have
\begin{align*}
\lim_{k \approach \infty}||S^k_{\alpha} - S_{\alpha}||_{\HS} = 0.
\end{align*}
It follows from Corollary \ref{corollary:log-Radon-Nikodym-L2-convergence} then that
\begin{align}
	\lim_{k \approach \infty}\left\| \log\left\{{\frac{d\mu^k_{\alpha}}{d\mu_{*}}(x)}\right\}- \log\left\{{\frac{d\mu_{\alpha}}{d\mu_{*}}(x)}\right\}\right\|_{\Lcal^2(\Hcal,\nu)} = 0.
	\label{equation:proof-general-3}
\end{align}
Combining Eqs.\eqref{equation:proof-general-4}, \eqref{equation:proof-general-2}, \eqref{equation:proof-general-3}, 
we have that, as elements in $\Lcal^2(\Hcal,\nu)$,
\begin{align*}
\log\left\{\frac{p_0^{1-\alpha}(x)p_1^{\alpha}(x)}{Z}\right\}
=\log\left\{{\frac{d\mu_{\alpha}}{d\mu_{*}}(x)}\right\}.
\end{align*}
In particular, setting $\nu = \mu_{\alpha}$ shows that they are identical as elements in $\Lcal^2(\Hcal, \mu_{\alpha})$.
Let $P = \mu_{*}\frac{p_0^{1-\alpha}(x)p_1^{\alpha}(x)}{Z}$, then $P \sim \mu_{*}$ and
$\log\left(\frac{dP}{d\mu_{*}}\right) = \log\left(\frac{d\mu_{\alpha}}{d\mu_{*}}\right)$ as elements in $\Lcal^2(\Hcal,\mu_{\alpha})$. By the chain rule,
this implies that $\left\|\log\left(\frac{d\mu_{\alpha}}{dP}\right)\right\|_{\Lcal^2(\Hcal,\mu_{\alpha})} = 0 \equivalent \mu_{\alpha} = P$ by Proposition \ref{proposition:KL-log-Radon-Nikodym-convergence}.
 Thus $\frac{dP}{d\mu_{*}} = \frac{p_0^{1-\alpha}(x)p_1^{\alpha}(x)}{Z} = 
\frac{d\mu_{\alpha}}{d\mu_{*}}$ as elements in $\Lcal^1(\Hcal, \mu_{*})$ and we must have
$Z^G_{\alpha}(p_0:p_1) = Z$.
%
%
	\qed
\end{proof}

\subsection{Proof of the exact geometric Jensen-Shannon divergence between equivalent Gaussian measures}
\label{section:proof-exact-JS-equivalent-Gaussian}


Theorem \ref{theorem:KL-gaussian} admits the following equivalent form on $\Gauss(\Hcal, \mu_{*})$.
	\begin{proposition}
	\label{proposition:KL-divergence-equivalent-Gaussian-measures-mu-star}
	Let $\mu_i = \Ncal(m_i, C_i) \in \Gauss(\Hcal, \mu_{*})$, $i=0,1$, with $C_i= C_{*}^{1/2}(I-S_i)C_{*}^{1/2}$, $S_i \in \SymHS(\Hcal)_{< I}$. Then
	\begin{align}
		\KL(\mu_2||\mu_1) &= \frac{1}{2}||C_1^{-1/2}(m_2-m_1)||^2 -\frac{1}{2}\log\dettwo(C_1^{-1/2}C_2C_1^{-1/2})
		\\
		&=\frac{1}{2}||C_1^{-1/2}(m_2-m_1)||^2 - \frac{1}{2}\log\dettwo([(I-S_1)^{-1}(I-S_2)]).
	\end{align}
\end{proposition}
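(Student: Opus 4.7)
The plan is to reduce this statement to Theorem \ref{theorem:KL-gaussian} by applying the Feldman-Hajek characterization directly to the pair $(\mu_1,\mu_2)$ (i.e. with $\mu_1$, rather than $\mu_{*}$, as the reference measure), and then to rewrite the Hilbert-Carleman determinant that appears in terms of $S_1,S_2$. Since $\mu_1,\mu_2 \in \Gauss(\Hcal,\mu_{*})$, transitivity of equivalence yields $\mu_1 \sim \mu_2$, so by Theorem \ref{theorem:Gaussian-equivalent} there exists a unique $S \in \SymHS(\Hcal)_{<I}$ with $C_2 = C_1^{1/2}(I-S)C_1^{1/2}$ and $m_2-m_1 \in \range(C_1^{1/2})$. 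Theorem \ref{theorem:KL-gaussian} applied to $(\mu_1,\mu_2)$ then gives
$$\KL(\mu_2||\mu_1) = \frac{1}{2}||C_1^{-1/2}(m_2-m_1)||^2 - \frac{1}{2}\log\dettwo(I-S),$$
and since $I-S$ is the unique bounded operator for which $C_1^{1/2}(I-S)C_1^{1/2} = C_2$, i.e.\ the natural interpretation of the formal expression $C_1^{-1/2}C_2 C_1^{-1/2}$, the first equality in the statement follows.

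For the second equality it remains to prove $\dettwo(I-S) = \dettwo((I-S_1)^{-1}(I-S_2))$. Set $A = C_{*}^{1/2}(I-S_1)^{1/2}$, a bounded operator for which both $A$ and $A^{*}$ are injective (from $\ker C_{*} = \{0\}$ and $I-S_1 > 0$). A direct calculation gives $AA^{*} = C_{*}^{1/2}(I-S_1)C_{*}^{1/2} = C_1$, so $(AA^{*})^{1/2} = C_1^{1/2}$ and the polar decomposition furnishes a unitary $V \in \Ubb(\Hcal)$ with $A = C_1^{1/2}V$, equivalently $C_{*}^{1/2} = C_1^{1/2}V(I-S_1)^{-1/2}$ as an equality of bounded operators, using that $I-S_1$ has a spectral gap and hence $(I-S_1)^{-1/2}$ is bounded. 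Substituting this into $C_2 = C_{*}^{1/2}(I-S_2)C_{*}^{1/2}$ and matching with $C_2 = C_1^{1/2}(I-S)C_1^{1/2}$ (which, using that $C_1^{1/2}$ is injective with dense range, uniquely identifies the bracketed factor) yields
$$I-S = V(I-S_1)^{-1/2}(I-S_2)(I-S_1)^{-1/2}V^{*}.$$

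Combining unitary invariance of $\dettwo$ with the cyclic identity $\dettwo(I+PQ) = \dettwo(I+QP)$ (valid whenever $PQ,QP \in \HS$, as a consequence of the equality of nonzero eigenvalues of $PQ$ and $QP$ with algebraic multiplicities together with the spectral formula $\dettwo(I+K)=\prod_k(1+\lambda_k)e^{-\lambda_k}$), applied with $P=(I-S_1)^{-1/2}$ and $Q=(S_1-S_2)(I-S_1)^{-1/2}$, yields $\dettwo(I-S) = \dettwo((I-S_1)^{-1}(I-S_2))$, completing the proof. The main technical subtlety will be the construction of the unitary $V$, which needs care because both $C_{*}^{-1/2}$ and $C_1^{-1/2}$ are unbounded; this is handled cleanly by taking the polar decomposition of the bounded injective operator $A=C_{*}^{1/2}(I-S_1)^{1/2}$, whose absolute value $(AA^{*})^{1/2}$ is exactly $C_1^{1/2}$, so that $V$ arises as an honest unitary on all of $\Hcal$, after which all remaining manipulations involve only bounded operators and standard properties of the Hilbert-Carleman determinant.
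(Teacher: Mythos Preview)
Your proposal is correct and follows essentially the same route as the paper: reduce to Theorem~\ref{theorem:KL-gaussian} via $\mu_1\sim\mu_2$, then use a polar decomposition to identify $I-S$ with a unitary conjugate of $(I-S_1)^{-1/2}(I-S_2)(I-S_1)^{-1/2}$, and finish with the eigenvalue invariance of $\dettwo$. The only cosmetic differences are that the paper polar-decomposes $(I-S_1)^{1/2}C_{*}^{1/2}$ (the adjoint of your $A$), obtaining $I-S=V^{*}(\cdot)V$ rather than $V(\cdot)V^{*}$, and it phrases the last step as ``$\dettwo$ is determined by eigenvalues'' rather than invoking the cyclic identity; note also that with your specific choice of $P,Q$ the cyclic identity lands on $(I-S_2)(I-S_1)^{-1}$, so either swap $P$ and $Q$ or apply it once more to reach $(I-S_1)^{-1}(I-S_2)$.
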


\begin{proof}
	[\textbf{of Proposition \ref{proposition:KL-divergence-equivalent-Gaussian-measures-mu-star}}]
	Since $\mu_1 \sim \mu_{*}$, $\mu_2 \sim \mu_{*}$, we have $\mu_1 \sim \mu_2$. Thus $\exists S \in \SymHS(\Hcal)_{< I}$ such that $C_2 = C_1^{1/2}(I-S)C_1^{1/2}$, i.e. the following operator is well-defined
	\begin{align*}
		I-S = C_1^{-1/2}C_2C_1^{-1/2}, \;\; S \in \SymHS(\Hcal)_{<I}.
	\end{align*}
	By Theorem \ref{theorem:KL-gaussian},
	\begin{align*}
		\KL(\mu_2||\mu_1)= \frac{1}{2}||C_1^{-1/2}(m_2-m_1)||^2 - \frac{1}{2}\log\dettwo(I-S).
	\end{align*}
	%
	By assumption, $C_1 = C_{*}^{1/2}(I-S_1)C_{*}^{1/2}$. Since $\ker(C_{*}) = 0$, the operator 
	$(I-S_1)^{1/2}C_{*}^{1/2}$ admits the polar decomposition
	\begin{align*}
		(I-S_1)^{1/2}C_{*}^{1/2} = V(C_{*}^{1/2}(I-S_1)C_{*}^{1/2})^{1/2} = VC_{1}^{1/2},
	\end{align*}
	where $V \in \Ubb(\Hcal)$. Since $C_1 \in \Sym(\Hcal)$, it follows that
	\begin{align*}
		C_{1}^{1/2} = V^{*}(I-S_1)^{1/2}C_{*}^{1/2} = C_{*}^{1/2}(I-S_1)^{1/2}V.
	\end{align*}
	With $C_2 = C_{*}^{1/2}(I-S_2)C_{*}^{1/2}$, it follows that
	\begin{align*}
		I-S = C_1^{-1/2}C_2C_1^{-1/2} = V^{*}(I-S_1)^{-1/2}(I-S_2)(I-S_1)^{-1/2}V,
	\end{align*}
	as can be verified directly that $C_1^{1/2}(I-S)C_1^{1/2} = C_2$. In particular,
	\begin{align*}
	S &= I - V^{*}(I-S_1)^{-1/2}(I-S_2)(I-S_1)^{-1/2}V 
	\\
	&= V^{*}[I - (I-S_1)^{-1/2}(I-S_2)(I-S_1)^{-1/2}]V
	\\
	&= V^{*}[(I-S_1)^{-1/2}(S_2-S_1)(I-S_1)^{-1/2}]V \in \Sym(\Hcal) \cap \HS(\Hcal).
	\end{align*}
	Since $\dettwo(I-S)$, $S \in \HS(\Hcal) \cap \Sym(\Hcal)$, is completely determined by the eigenvalues of $S$, 
	it follows that, with $VV^{*} = V^{*}V = I$, 
	\begin{align*}
		\dettwo(I-S) &= \dettwo[(I-S_1)^{-1/2}(I-S_2)(I-S_1)^{-1/2}] 
		\\
		&= \dettwo([(I-S_1)^{-1}(I-S_2)]).
	\end{align*}
	\qed
\end{proof}

\begin{proof}[\textbf{of Theorem \ref{theorem:geometric-JS-equivalent-Gaussian-measures-Hilbert-space}}]
	For $C_i = C_{*}^{1/2}(I-S_i)C_{*}^{1/2}$, $i=0,1$, $C_{\alpha} = C_{*}^{1/2}(I-S_{\alpha})C_{*}^{1/2}$,
	by Proposition \ref{proposition:KL-divergence-equivalent-Gaussian-measures-mu-star},
	\begin{align*}
		\KL(\mu_i||\mu_{\alpha}) = \frac{1}{2}||C_{\alpha}^{-1/2}(m_i- m_{\alpha})||^2 - \frac{1}{2}\log\dettwo[(I -S_{\alpha})^{-1}
		(I-S_i)].
	\end{align*}
	It follows then that
	\begin{align*}
		&\JS^{G_{\alpha}}(\mu_0||\mu_1) = (1-\alpha)\KL(\mu_0||\mu_{\alpha}) + \alpha \KL(\mu_1||\mu_{\alpha})
		\\
		& = \frac{(1-\alpha)}{2}||C_{\alpha}^{-1/2}(m_0- m_{\alpha})||^2 - \frac{1-\alpha}{2}\log\dettwo[(I -S_{\alpha})^{-1}
		(I-S_0)]
		\\
		&\quad +\frac{\alpha}{2}||C_{\alpha}^{-1/2}(m_1- m_{\alpha})||^2 -\frac{\alpha}{2}\log\dettwo[(I -S_{\alpha})^{-1}
		(I-S_1)].
	\end{align*}	
	Write $(I-S_{\alpha})^{-1}(I-S_i) = I + (I-S_{\alpha})^{-1}(S_{\alpha} - S_i)$, $i=0,1$.
	For $A \in \Tr(\Hcal)$, we have $\dettwo(I+A) = \det[(I+A)\exp(-A)]$, so that $\log\dettwo(I+A) = \log\det(I+A) - \trace(A)$.
	Thus, for $S_0, S_1 \in \Tr(\Hcal)$,
	\begin{align*}
		&\log\dettwo[(I-S_{\alpha})^{-1}(I-S_i)] 
		\\
		&= \log\det[(I-S_{\alpha})^{-1}(I-S_i)] - \trace[(I-S_{\alpha})^{-1}(S_{\alpha}-S_i)], \;\; i=0,1.
	\end{align*}
	It follows that
	\begin{align*}
		&(1-\alpha)\log\dettwo[(I-S_{\alpha})^{-1}(I-S_0)] + \alpha \log\dettwo[(I-S_{\alpha})^{-1}(I-S_1)]
		\\
		& = \log\frac{\det(I-S_0)^{1-\alpha}\det(I-S_1)^{\alpha}}{\det(I-S_{\alpha})}- \trace[(I-S_{\alpha})^{-1}(S_{\alpha}-(1-\alpha)S_0- \alpha S_1)].
	\end{align*}
	\qed
\end{proof}

The following proof of Corollary \ref{corollary:geometric-JS-Gaussian-finite} is provided to verify
the correctness of Theorem \ref{theorem:geometric-JS-equivalent-Gaussian-measures-Hilbert-space} in the finite-dimensional setting.
\begin{proof}[\textbf{of Corollary \ref{corollary:geometric-JS-Gaussian-finite}}]
	Instead of computing $\JS^{G_{\alpha}}(\mu_1 ||\mu_0)$ directly using the KL divergence between Gaussian densities in $\R^n$, we derive it from the general formula in Eq.\eqref{equation:geometric-JS-Gaussian-Hilbert-trace-class-S} in Theorem
	\ref{theorem:geometric-JS-equivalent-Gaussian-measures-Hilbert-space}.
	From the formulas $C_i = C_{*}^{1/2}(I-S_i)C_{*}^{1/2}$, $i=0,1,\alpha$, we have $I-S_i = C_{*}^{-1/2}C_iC_{*}^{-1/2}$, $(I-S_i)^{-1} = C_{*}^{1/2}C_i^{-1}C_{*}^{1/2}$, $S_i = I - C_{*}^{-1/2}C_iC_{*}^{-1/2}$. 
	It follows that for the $\log\det$ term
	\begin{align*}
		\log\frac{\det(I-S_0)^{1-\alpha}\det(I-S_1)^{\alpha}}{\det(I-S_{\alpha})} = \log\frac{\det(C_0)^{1-\alpha}\det(C_1)^{\alpha}}{\det(C_{\alpha})}.
	\end{align*}
	Similarly, for the trace term,
	\begin{align*}
		&S_{\alpha} - (1-\alpha)S_0 - \alpha S_1 = C_{*}^{-1/2}[(1-\alpha)C_0 + \alpha C_1 - C_{\alpha}]C_{*}^{-1/2},
		\\
		&\trace[(I-S_{\alpha})^{-1}(S_{\alpha}-(1-\alpha)S_0- \alpha S_1)] = \trace[C_{\alpha}^{-1}((1-\alpha)C_0 + \alpha C_1) - I].
	\end{align*}
	Combining these two expressions with Eq.\eqref{equation:geometric-JS-Gaussian-Hilbert-trace-class-S} gives
	\begin{align*}
		\JS_{G_{\alpha}}(\mu_0 ||\mu_1) &= \frac{(1-\alpha)}{2}||C_{\alpha}^{-1/2}(m_0- m_{\alpha})||^2 +\frac{\alpha}{2}||C_{\alpha}^{-1/2}(m_1- m_{\alpha})||^2
		\\
		&-\frac{1}{2}\log\frac{\det(C_0)^{1-\alpha}\det(C_1)^{\alpha}}{\det(C_{\alpha})} 
		+
		\frac{1}{2}\trace[C_{\alpha}^{-1}((1-\alpha)C_0 + \alpha C_1) - I].
	\end{align*}
	\qed
\end{proof}

\subsection{Proof of of the limiting behavior of the regularized geometric Jensen-Shannon divergence}
\label{section:proof-regularized}

\begin{proof}
	[\textbf{of Lemma \ref{lemma:C-alpha-gamma-form}}]
	It is clear by definition that $C_{\alpha,\gamma} \in \bP(\Hcal)$.
	We have the identity $(I+A)^{-1} = I- A(I+A)^{-1}$ for any $A \in \Lcal(\Hcal)$ 
	such that $I+A$ is invertible. Thus for $A+\gamma I$ invertible, $\gamma \in \R, \gamma \neq 0$,
	we have $(A+\gamma I)^{-1} = \frac{1}{\gamma}[I - A(A + \gamma I)^{-1}]$. Hence
	\begin{align*}
		&(1-\alpha)(C_0 + \gamma I)^{-1} + \alpha (C_1 + \gamma I)^{-1} 
		\\
		& = \frac{1}{\gamma}\left[I - (1-\alpha)C_0(C_0+\gamma I)^{-1} - \alpha C_1(C_1+\gamma I)^{-1}\right] = \frac{1}{\gamma}(I-B),
	\end{align*}
	where $B = (1-\alpha)C_0(C_0+\gamma I)^{-1} +\alpha C_1(C_1+\gamma I)^{-1} \in \SymTr(\Hcal)_{<I}$.
	Thus
	\begin{align*}
		C_{\alpha, \gamma} &= \gamma (I-B)^{-1} =\gamma [I + (I-B)^{-1/2}B(I-B)^{-1/2}] 
		\\
		&= \gamma I + \gamma (I-B)^{-1/2}B(I-B)^{-1/2} 
		= \gamma I + A > 0,
	\end{align*}
	with $A = \gamma(I-B)^{-1/2}B(I-B)^{-1/2} \in \Sym(\Hcal) \cap \Tr(\Hcal)$.
	It follows that $C_{\alpha,\gamma} \in \PC_1(\Hcal)$.
	\qed
\end{proof}

\begin{proposition}
	[\cite{Minh:2020regularizedDiv}]
	\label{proposition:limit-product-A-plus-gamma-B}
	Let $A$ be a compact, self-adjoint, strictly positive operator on $\Hcal$. Let $B \in \HS(\Hcal)$. Then
	\begin{align}
		\lim_{\gamma \approach 0^{+}}||(A+\gamma I)^{-1/2}A^{1/2}BA^{1/2}(A+\gamma I)^{-1/2} - B||_{\HS} = 0.
	\end{align}
\end{proposition}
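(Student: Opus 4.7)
The plan is to set $T_\gamma = (A+\gamma I)^{-1/2}A^{1/2}$ and argue that $T_\gamma B T_\gamma$ converges to $B$ in Hilbert-Schmidt norm as $\gamma \approach 0^{+}$. The core of the argument is to first establish that $T_\gamma$ is uniformly bounded by $1$ and converges strongly (but not in operator norm) to the identity, and then to upgrade this to Hilbert-Schmidt convergence by exploiting that $B \in \HS(\Hcal)$ via a dominated convergence argument on the Hilbert-Schmidt sum.

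First I would invoke the spectral theorem for the compact, self-adjoint, strictly positive operator $A$ to obtain eigenvalues $\lambda_k > 0$ with $\lambda_k \approach 0$ and an orthonormal eigenbasis $\{e_k\}_{k \in \Nbb}$ of $\Hcal$. In this basis $T_\gamma$ is self-adjoint and diagonal with eigenvalues $\sqrt{\lambda_k/(\lambda_k+\gamma)} \in (0,1]$, so $\|T_\gamma\| \leq 1$ uniformly in $\gamma > 0$. For fixed $x \in \Hcal$, the identity $\|T_\gamma x - x\|^2 = \sum_{k=1}^{\infty}\bigl(1-\sqrt{\lambda_k/(\lambda_k+\gamma)}\bigr)^2 |\la x, e_k\ra|^2$ together with dominated convergence on the series gives $T_\gamma \approach I$ strongly.

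Next I would write
\[
T_\gamma B T_\gamma - B = (T_\gamma - I)B + T_\gamma B(T_\gamma - I),
\]
and use $\|T_\gamma\| \leq 1$ with the triangle inequality to reduce the claim to showing that $\|(T_\gamma - I)C\|_{\HS} \approach 0$ for every $C \in \HS(\Hcal)$: this handles the first term with $C = B$ directly, and handles the second term via $\|B(T_\gamma - I)\|_{\HS} = \|(T_\gamma - I)B^{*}\|_{\HS}$ (using self-adjointness of $T_\gamma - I$) applied to $C = B^{*}$. For this key step, expand in the basis $\{e_k\}$ to get $\|(T_\gamma - I)C\|_{\HS}^2 = \sum_{k=1}^{\infty}\|(T_\gamma - I)Ce_k\|^2$. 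For each $k$, $(T_\gamma - I)Ce_k \approach 0$ by strong convergence, while the summands are dominated by $4\|Ce_k\|^2$ whose sum $\|C\|_{\HS}^2$ is finite, so dominated convergence for series yields the vanishing.

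The main obstacle is conceptual rather than technical: a naive operator-norm bound on $T_\gamma B T_\gamma - B$ cannot work, since $\|T_\gamma - I\|$ stays bounded away from $0$ as $\gamma \approach 0^{+}$ because of the small eigenvalues of $A$ (for any fixed $\gamma > 0$ one has $1-\sqrt{\lambda_k/(\lambda_k+\gamma)} \approach 1$ as $k \approach \infty$, and this behavior does not improve uniformly as $\gamma \approach 0^{+}$). The argument must therefore trade operator-norm control for Hilbert-Schmidt control, using precisely that $B \in \HS(\Hcal)$ to supply the summable majorant that turns pointwise (strong) convergence into convergence of the Hilbert-Schmidt norm.
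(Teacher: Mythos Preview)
Your argument is correct. The paper does not supply its own proof of this proposition; it simply cites it from \cite{Minh:2020regularizedDiv} and uses it as a black box in the proof of Proposition~\ref{proposition:logdet-C0-C-alpha-gammma-0}. Your approach---uniform boundedness and strong convergence of $T_\gamma = (A+\gamma I)^{-1/2}A^{1/2}$ to $I$ via the spectral decomposition, the splitting $T_\gamma B T_\gamma - B = (T_\gamma - I)B + T_\gamma B(T_\gamma - I)$, and dominated convergence on the Hilbert--Schmidt sum---is the standard and natural route, and your remark that operator-norm convergence of $T_\gamma$ fails (so that the $\HS$ hypothesis on $B$ is essential) is exactly the right diagnosis.
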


\begin{lemma}
	\label{lemma:Hilbert-Carleman-det2-modified}
	Let $S \in \SymHS(\Hcal)_{< I}$ be fixed. Then $(1-\alpha)S \in \SymHS(\Hcal)_{< I}$ and consequently $\log\dettwo[I-(1-\alpha)S]$ is finite and $\dettwo[I-(1-\alpha)S] > 0$ $\forall \alpha \in [0,1]$.
\end{lemma}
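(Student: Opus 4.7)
The plan is to verify the membership $(1-\alpha)S \in \SymHS(\Hcal)_{< I}$ and then invoke the general properties of the Hilbert--Carleman determinant on $\SymHS(\Hcal)_{<I}$ already recorded at the start of Section \ref{section:proofs}.

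First I would check that $I-(1-\alpha)S > 0$ for every $\alpha \in [0,1]$. Since $S \in \Sym(\Hcal)\cap \HS(\Hcal)$, the operator $(1-\alpha)S$ clearly lies in $\Sym(\Hcal)\cap\HS(\Hcal)$, so only the strict positivity requires work. The cleanest way is to use the convex combination identity
\begin{align*}
I - (1-\alpha)S \;=\; \alpha I + (1-\alpha)(I-S).
\end{align*}
For $\alpha = 0$ the right-hand side equals $I-S$, which is strictly positive by the hypothesis $S \in \SymHS(\Hcal)_{<I}$. For $\alpha \in (0,1]$ the summand $\alpha I$ is strictly positive and $(1-\alpha)(I-S)$ is positive (zero when $\alpha=1$), so $I-(1-\alpha)S \geq \alpha I > 0$ in the sense of $\Sym^{++}(\Hcal)$. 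Hence $(1-\alpha)S \in \SymHS(\Hcal)_{<I}$ for all $\alpha \in [0,1]$.

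The second step is then immediate. The paragraph preceding Proposition \ref{proposition:logdet2-I-S-finite} shows that for any $T \in \SymHS(\Hcal)_{<I}$, the Hilbert--Carleman determinant satisfies $\dettwo(I-T) = \det[(I-T)\exp(T)] > 0$, and $\log\dettwo(I-T)$ is finite (with the quantitative bound of Proposition \ref{proposition:logdet2-I-S-finite}). Applying this with $T = (1-\alpha)S$ yields $\dettwo[I-(1-\alpha)S] > 0$ and the finiteness of $\log\dettwo[I-(1-\alpha)S]$, completing the proof. There is no real obstacle here: the lemma is essentially a bookkeeping statement needed later, and the only substantive content is the convex-combination inequality above.
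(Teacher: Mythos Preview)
Your proof is correct and follows the same overall structure as the paper: verify $(1-\alpha)S \in \SymHS(\Hcal)_{<I}$, then invoke Proposition~\ref{proposition:logdet2-I-S-finite} (and the paragraph preceding it) for the determinant statements. The only difference is in how strict positivity of $I-(1-\alpha)S$ is checked: the paper argues via the eigenvalues of $S$, splitting into the cases $\lambda_{\max}\geq 0$ and $\lambda_{\max}<0$, whereas you use the convex-combination identity $I-(1-\alpha)S=\alpha I+(1-\alpha)(I-S)$ to obtain $I-(1-\alpha)S\geq \alpha I>0$ for $\alpha>0$ (and $=I-S>0$ for $\alpha=0$). Your operator-level argument is slightly cleaner and avoids the case split; both are entirely elementary and yield the same conclusion.
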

\begin{proof}
	Let $\{\lambda_k\}_{k=1}^{\infty}$ be the eigenvalues of $S$, then $1-\lambda_k > 0$ $\forall k \in \Nbb$ and $\lim_{k \approach \infty}\lambda_k = 0$.
	Let $\lambda_{\max}$ be the largest eigenvalue of $S$.
	If $\lambda_{\max} \geq 0$, then for $0 \leq \alpha \leq 1$, we have $1-(1-\alpha)\lambda_k \geq 1-(1-\alpha)\lambda_{\max} \geq 1-\lambda_{\max} > 0$.
	If $\lambda_{\max} < 0$, then obviously $1-(1-\alpha)\lambda_k \geq 1$ $\forall k \in \Nbb$. Thus $I-(1-\alpha)S > 0$, i.e. $(1-\alpha)S \in \SymHS(\Hcal)_{< I}$. 
	Thus, by Proposition \ref{proposition:logdet2-I-S-finite}, $\log\dettwo[I-(1-\alpha)S]$ is finite and $\dettwo[I-(1-\alpha)S] > 0$ $\forall \alpha \in [0,1]$.
	\qed
\end{proof}

\begin{proposition}
	\label{proposition:logdet-C0-C-alpha-gammma-0}
	Assume the hypothesis of Theorem \ref{theorem:geometric-interpolation-equivalent-Gaussian-Hilbert-space}.
	For $i=0,1$
	\begin{align}
		\label{equation:logdet-1-limit-gamma-0}
		\lim_{\gamma \approach 0^{+}}d^1_{\logdet}[(C_i + \gamma I), C_{\alpha,\gamma}]= -\log\dettwo[(I-S_{\alpha})^{-1}(I-S_i)],
	\end{align}
	where $S_{\alpha}$ is as given in Theorem \ref{theorem:geometric-interpolation-equivalent-Gaussian-Hilbert-space}.
\end{proposition}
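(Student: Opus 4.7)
The plan is to collapse the Log-Det divergence to a single $\dettwo$ expression via the matching-regularization simplification, rewrite that expression in a form suited to Proposition \ref{proposition:limit-product-A-plus-gamma-B}, and pass to the limit via continuity of $\dettwo$ in Hilbert--Schmidt norm.

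First, since $(C_i+\gamma I)$ and $C_{\alpha,\gamma}$ both lie in $\PC_1(\Hcal)$ with identical scalar part $\gamma$ (the latter by Lemma \ref{lemma:C-alpha-gamma-form}), the ratio $\gamma/\mu$ in \eqref{equation:alpha+1} equals $1$, yielding
\[
d^1_{\logdet}[(C_i+\gamma I), C_{\alpha,\gamma}] = \trX[T_\gamma] - \log\detX[I+T_\gamma], \qquad T_\gamma := C_{\alpha,\gamma}^{-1}(C_i+\gamma I) - I.
\]
For $i=0$ (the case $i=1$ is symmetric), expanding $C_{\alpha,\gamma}^{-1}$ gives $T_\gamma = \alpha(C_1+\gamma I)^{-1}(C_0-C_1)$, and $C_0-C_1 = C_*^{1/2}(S_1-S_0)C_*^{1/2} \in \Tr(\Hcal)$ because $C_*^{1/2} \in \HS(\Hcal)$. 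Hence $T_\gamma \in \Tr(\Hcal)$, the extended trace and determinant reduce to their ordinary counterparts, and the identity $\log\dettwo(I+T) = \log\det(I+T) - \trace(T)$ collapses the divergence to $d^1_{\logdet}[(C_0+\gamma I), C_{\alpha,\gamma}] = -\log\dettwo[C_{\alpha,\gamma}^{-1}(C_0+\gamma I)]$.

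Next I exploit similarity invariance of $\dettwo$. Conjugating by $(C_0+\gamma I)^{1/2}$ produces $\dettwo[(1-\alpha)I + \alpha F_\gamma]$ with $F_\gamma := (C_0+\gamma I)^{1/2}(C_1+\gamma I)^{-1}(C_0+\gamma I)^{1/2}$. Using the identity $(A+B)^{-1} = A^{-1/2}(I+A^{-1/2}BA^{-1/2})^{-1}A^{-1/2}$ with $A = C_0+\gamma I$, $B = C_1-C_0$, I obtain $F_\gamma = (I+G_\gamma)^{-1}$ where $G_\gamma = (C_0+\gamma I)^{-1/2}(C_1-C_0)(C_0+\gamma I)^{-1/2}$. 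Feldman--H\'ajek equivalence $\mu_0 \sim \mu_1$ (both being equivalent to $\mu_*$) supplies $B_0 \in \Sym(\Hcal) \cap \HS(\Hcal)$ with $C_1-C_0 = C_0^{1/2}B_0 C_0^{1/2}$, so
\[
G_\gamma = (C_0+\gamma I)^{-1/2}C_0^{1/2}B_0 C_0^{1/2}(C_0+\gamma I)^{-1/2} \longrightarrow B_0 \quad \text{in HS}
\]
by Proposition \ref{proposition:limit-product-A-plus-gamma-B}. Uniform boundedness of $\|F_\gamma\|$, from the symmetric comparability $c^{-1}C_0 \leq C_1 \leq c\,C_0$ afforded by $\mu_0 \sim \mu_1$, combined with the resolvent-difference identity $F_\gamma - F_0 = F_\gamma(B_0-G_\gamma)F_0$, promotes this to $F_\gamma \to F_0 := (I+B_0)^{-1}$ in HS.

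To identify $\dettwo[(1-\alpha)I + \alpha F_0]$, I match spectra: the generalized eigenvalue problem $C_1\phi = \lambda C_0\phi$ (whose eigenvalues coincide with those of $I + B_0 = F_0^{-1}$) is converted, via the substitution $\psi = C_*^{1/2}\phi$ and injectivity of $C_*^{1/2}$, into $(I-S_1)\psi = \lambda (I-S_0)\psi$, using $C_i = C_*^{1/2}(I-S_i)C_*^{1/2}$. Hence $\mathrm{spec}(F_0) = \mathrm{spec}((I-S_1)^{-1}(I-S_0))$, and since $\dettwo(I+M)$ depends only on the spectrum of $M \in \HS(\Hcal)$,
\[
\dettwo[(1-\alpha)I + \alpha F_0] = \dettwo[(1-\alpha)I + \alpha(I-S_1)^{-1}(I-S_0)] = \dettwo[(I-S_\alpha)^{-1}(I-S_0)],
\]
the last equality using $(I-S_\alpha)^{-1} = (1-\alpha)(I-S_0)^{-1} + \alpha(I-S_1)^{-1}$. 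Continuity of $\dettwo$ in HS norm and negating the logarithm deliver \eqref{equation:logdet-1-limit-gamma-0}. The main obstacle is the HS-convergence step: the factors $(C_0+\gamma I)^{-1/2}$ individually diverge in operator norm as $\gamma \to 0^+$, but the rewriting $F_\gamma = (I+G_\gamma)^{-1}$ exposes the exact three-factor structure $(C_0+\gamma I)^{-1/2}C_0^{1/2}(\cdot)C_0^{1/2}(C_0+\gamma I)^{-1/2}$ required by Proposition \ref{proposition:limit-product-A-plus-gamma-B}, and uniform control of $\|F_\gamma\|$, needed to pass the HS-convergence through the operator inverse, rests crucially on the symmetric Feldman--H\'ajek equivalence $\mu_0 \sim \mu_1$.
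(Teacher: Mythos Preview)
Your proof is correct and tracks the paper's argument closely: collapse $d^1_{\logdet}$ to a single $-\log\dettwo$, apply Proposition~\ref{proposition:limit-product-A-plus-gamma-B} for the HS limit, and identify the result with $(I-S_\alpha)^{-1}(I-S_i)$. The paper proceeds slightly more directly by working from the outset with $I+G_\gamma = (C_0+\gamma I)^{-1/2}(C_1+\gamma I)(C_0+\gamma I)^{-1/2}$ (this is your $F_\gamma^{-1}$), which equals $I - (C_0+\gamma I)^{-1/2}C_0^{1/2}SC_0^{1/2}(C_0+\gamma I)^{-1/2}$ and hence converges to $I-S$ in HS immediately; this spares your detour through the inverse $F_\gamma$ and the uniform-boundedness argument needed to push HS-convergence through that inverse.

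One step in your write-up deserves more rigor: the spectral identification via the generalized eigenvalue problem $C_1\phi = \lambda C_0\phi$ and the substitution $\psi = C_*^{1/2}\phi$ is heuristic, since that substitution ranges only over the dense subspace $\range(C_*^{1/2})$ and need not hit the relevant eigenvectors, so it does not by itself establish equality of spectra in infinite dimensions. The paper closes this via the polar decomposition $(I-S_0)^{1/2}C_*^{1/2} = VC_0^{1/2}$ with $V \in \Ubb(\Hcal)$, which yields $I-S = V^*(I-S_0)^{-1/2}(I-S_1)(I-S_0)^{-1/2}V$, a genuine unitary equivalence that makes the spectral (and hence $\dettwo$) match rigorous.
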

\begin{proof}
	[\textbf{of Proposition \ref{proposition:logdet-C0-C-alpha-gammma-0}}]
	Let us prove Eq.\eqref{equation:logdet-1-limit-gamma-0} for $i=1$.
	Write $C_{\alpha,\gamma } = A+ \gamma I$,
	with $A \in \Sym(\Hcal) \cap \Tr(\Hcal)$ as in Lemma \ref{lemma:C-alpha-gamma-form}. 
	Then $(A+\gamma I)^{-1} = [(1-\alpha)(C_0 + \gamma I)^{-1} + \alpha (C_1 + \gamma I)^{-1}]$ and
	\begin{align*}
		&(A+\gamma I)^{-1}(C_1 + \gamma I) = [(1-\alpha)(C_0 + \gamma I)^{-1} + \alpha (C_1 + \gamma I)^{-1}](C_1+\gamma I)
		\\
		& = \alpha I + (1-\alpha) (C_0 +\gamma I)^{-1}(C_1+\gamma I)
		\\
		& = I + (1-\alpha)[(C_0 +\gamma I)^{-1}(C_1+\gamma I)-I].
	\end{align*}
	By definition 
	of $d^1_{\logdet}$,
	\begin{align*}
		&d^1_{\logdet}[(C_0 + \gamma I), C_{\alpha,\gamma}] = d^1_{\logdet}[(C_0 + \gamma), (A+\gamma I)]
		\\
		& = \traceX[(A+\gamma I)^{-1}(C_0 + \gamma I) - I] - \log\detX[(A+\gamma I)^{-1}(C_0 + \gamma I)]
		\\
		& = \trace[(1-\alpha)[(C_0 +\gamma I)^{-1}(C_1+\gamma I)-I]] 
		\\
		& \quad - \log\det[I + (1-\alpha)[(C_0 +\gamma I)^{-1}(C_1+\gamma I)-I]]
		\\
		&  = -\log\dettwo(I + (1-\alpha)[(C_0+\gamma I)^{-1}(C_1 + \gamma I) - I]).
	\end{align*}
	Since $\mu_0 \sim \mu_1$, $\exists S \in \SymHS(\Hcal)_{< I}$ such that $C_1 = C_0^{1/2}(I-S)C_0^{1/2}$.
	With $C_1 = C_0^{1/2}(I-S)C_0^{1/2} = C_0 - C_0^{1/2}SC_0^{1/2}$, for any $\gamma \in \R, \gamma > 0$, we have $C_1 +\gamma I = C_0+\gamma I - C_0^{1/2}SC_0^{1/2}$ and consequently,
	\begin{align*}
		&(C_0 + \gamma I)^{-1/2}(C_1 + \gamma I)(C_0 + \gamma I)^{-1/2} 
		\\
		&= I - (C_0 + \gamma I)^{-1/2}C_0^{1/2}SC_0^{1/2}(C_0 + \gamma I)^{-1/2}.
	\end{align*}
	By Proposition \ref{proposition:limit-product-A-plus-gamma-B},
	\begin{align*}
		\lim_{\gamma \approach 0^{+}}||(C_0 + \gamma I)^{-1/2}C_0^{1/2}SC_0^{1/2}(C_0 + \gamma I)^{-1/2} - S||_{\HS} = 0.
	\end{align*}
	By the continuity of the Hilbert-Carleman determinant $\dettwo$ in the $\HS$ norm
	\begin{align*}
		&\lim_{\gamma \approach 0^{+}}\log\dettwo(I + (1-\alpha)[(C_0+\gamma I)^{-1/2}(C_1 + \gamma I)(C_0+\gamma I)^{-1/2} - I])
		\\
		&=\log\dettwo[I-(1-\alpha)S],
	\end{align*}
	which is well-defined by Lemma \ref{lemma:Hilbert-Carleman-det2-modified}. Let us express this in terms of the operators $S_{\alpha}, S_0, S_1 \in \SymHS(\Hcal)_{< I}$ as defined in Theorem
	\ref{theorem:geometric-interpolation-equivalent-Gaussian-Hilbert-space}. We have
	\begin{align*}
		(I-S_{\alpha})^{-1} &= (1-\alpha)(I-S_0)^{-1} + \alpha (I-S_1)^{-1},
		\\
		(I-S_{\alpha})^{-1}(I-S_1) &= (1-\alpha)(I-S_0)^{-1}(I-S_1) + \alpha I.
	\end{align*}
	As in the proof of Proposition \ref{proposition:KL-divergence-equivalent-Gaussian-measures-mu-star}, 
	consider the polar decomposition $(I-S_0)^{1/2}C_{*}^{1/2} = V(C_{*}^{1/2}(I-S_0)C_{*}^{1/2})^{1/2} = VC_0^{1/2}$
	with $V \in \Ubb(\Hcal)$. Then
	\begin{align*}
		I-S &= V^{*}(I-S_0)^{-1/2}(I-S_1)(I-S_0)^{-1/2}V
		\\
		\equivalent S &= I- V^{*}(I-S_0)^{-1/2}(I-S_1)(I-S_0)^{-1/2}V 
		\\
		&= V^{*}[I - (I-S_0)^{-1/2}(I-S_1)(I-S_0)^{-1/2}]V.
	\end{align*}
	With $A \in \Sym(\Hcal) \cap \HS(\Hcal)$, $\dettwo(I+A)$ is completely determined by the eigenvalues of $A$,
	which are the same as those of $V^{*}AV$ for $V \in \Ubb(\Hcal)$. Thus
	\begin{align*}
		&\log\dettwo[I - (1-\alpha)S] 
		\\
		& = \log\dettwo[I - (1-\alpha)V^{*}[I-(I-S_0)^{-1/2}(I-S_1)(I-S_0)^{-1/2}]V]
		\\
		&= \log\dettwo[I - (1-\alpha)[I-(I-S_0)^{-1/2}(I-S_1)(I-S_0)^{-1/2}]]
		\\
		& = \log\dettwo[I - (1-\alpha)[I-(I-S_0)^{-1}(I-S_1)]]
		\\
		& = \log\dettwo[\alpha I + (1-\alpha)(I-S_0)^{-1}(I-S_1)]
		\\
		& = \log\dettwo[(I-S_{\alpha})^{-1}(I-S_1)]
	\end{align*}
	It follows that $\lim_{\gamma \approach 0^{+}}d^1_{\logdet}[(C_1 + \gamma I), C_{\alpha,\gamma}]= -\log\dettwo[(I-S_{\alpha})^{-1}(I-S_1)]$.
	Similarly, $\lim_{\gamma \approach 0^{+}}d^1_{\logdet}[(C_0 + \gamma I), C_{\alpha,\gamma}]= -\log\dettwo[(I-S_{\alpha})^{-1}(I-S_0)]$.
	\qed
\end{proof}

\begin{lemma}
	\label{lemma:equivalent-Gaussian-covariance-range}
	Let $\Ncal(m,C)$, $\Ncal(m_0, C_0)$ be two equivalent Gaussian measures on $\Hcal$. Then
	$\range(C^{1/2}) = \range(C_0^{1/2})$.
\end{lemma}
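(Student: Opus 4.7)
\noindent\textbf{Proof plan for Lemma \ref{lemma:equivalent-Gaussian-covariance-range}.}
The plan is to extract the Feldman–Hajek structural representation of equivalent Gaussians and then translate the operator identity into an equality of ranges via a standard polar-decomposition fact. Since $\Ncal(m_0,C_0) \sim \Ncal(m,C)$, Theorem \ref{theorem:Gaussian-equivalent} supplies an operator $S \in \Sym(\Hcal) \cap \HS(\Hcal)$ with $I-S > 0$ such that
\begin{equation*}
C \;=\; C_0^{1/2}(I-S)C_0^{1/2}.
\end{equation*}
My first step will be to show that $I-S$ is not merely strictly positive but in fact \emph{bounded below}, hence invertible in $\Lcal(\Hcal)$. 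Indeed, $S$ is compact and self-adjoint, so its eigenvalues $\{\alpha_k\}_{k\in\Nbb}$ converge to $0$; combined with the strict inequalities $\alpha_k < 1$ coming from $I-S > 0$, this forces $\alpha_{\max} := \sup_k \alpha_k < 1$, and therefore $I-S \geq (1-\alpha_{\max})I$. Consequently $(I-S)^{1/2}$ is a bounded bijection of $\Hcal$ onto itself.

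The second step is to invoke the standard operator-theoretic identity that for any bounded $T \in \Lcal(\Hcal)$ one has $\range(T) = \range((TT^{*})^{1/2})$; this is an immediate consequence of Douglas' range-inclusion theorem applied in both directions to the pair $(T, (TT^{*})^{1/2})$. Setting $T = C_0^{1/2}(I-S)^{1/2}$, one has $TT^{*} = C_0^{1/2}(I-S)C_0^{1/2} = C$, so $(TT^{*})^{1/2} = C^{1/2}$ and hence
\begin{equation*}
\range(C^{1/2}) \;=\; \range\bigl(C_0^{1/2}(I-S)^{1/2}\bigr).
\end{equation*}

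The third and final step is routine: since $(I-S)^{1/2}$ maps $\Hcal$ bijectively onto itself,
\begin{equation*}
\range\bigl(C_0^{1/2}(I-S)^{1/2}\bigr) \;=\; C_0^{1/2}\bigl((I-S)^{1/2}(\Hcal)\bigr) \;=\; C_0^{1/2}(\Hcal) \;=\; \range(C_0^{1/2}),
\end{equation*}
which yields the desired equality $\range(C^{1/2}) = \range(C_0^{1/2})$.

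There is essentially no hard part; the only point requiring genuine attention is the verification that $I-S$ has a bounded inverse, as $\SymHS(\Hcal)_{<I}$ is defined only through the strict positivity $I-S > 0$. The compactness of $S$ together with $\alpha_k \to 0$ is what bridges strict positivity to positive-definiteness, and once this is in hand the remainder of the argument is purely formal.
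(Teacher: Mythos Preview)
Your proof is correct and follows essentially the same route as the paper: both invoke the Feldman--Hajek representation $C = C_0^{1/2}(I-S)C_0^{1/2}$, apply the identity $\range(T) = \range((TT^{*})^{1/2})$ to $T = C_0^{1/2}(I-S)^{1/2}$ (the paper cites Fillmore--Williams for this, which is the same Douglas-type fact you use), and then conclude via surjectivity of $(I-S)^{1/2}$. Your added justification that $I-S$ is bounded below is a welcome detail that the paper leaves implicit.
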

\begin{proof}
	By Theorem 2.1 in \cite{Fillmore:1971Operator}, for any bounded operator $A \in \Lcal(\Hcal)$, we have $\range(A) = \range(AA^{*})^{1/2}$.
	With $\Ncal(m,C) \sim \Ncal(m_0,C_0)$, $\exists S \in \SymHS(\Hcal)_{< I}$ such that
	$C= C_0^{1/2}(I-S)C_0^{1/2}$. Let $A = C_0^{1/2}(I-S)^{1/2}$, then 
	\begin{align*}
		\range(C^{1/2}) &= \range(AA^{*})^{1/2} = \range(A) = \range[C_0^{1/2}(I-S)^{1/2}] 
		\\
		&= \range(C_0^{1/2}).
	\end{align*} 
	\qed
\end{proof}


\begin{lemma}
	[\cite{Minh:2020regularizedDiv}]
	\label{lemma:limit-quadratic-form}
	Let $A$ be a self-adjoint, compact, strictly positive operator on $\Hcal$. Then
	\begin{align}
		\lim_{\gamma \approach 0^{+}}\la x, (A+\gamma I)^{-1}x\ra 
		= \left\{
		\begin{matrix}
			||A^{-1/2}x||^2 & \text{ if } x \in \range(A^{1/2}),
			\\
			\infty & \text{otherwise}.
		\end{matrix}
		\right.
	\end{align}
\end{lemma}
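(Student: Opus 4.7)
The plan is to reduce everything to an explicit series via the spectral decomposition of $A$ and then invoke the monotone convergence theorem in each of the two cases separately. Since $A$ is self-adjoint, compact, and strictly positive, it admits a spectral decomposition $A = \sum_{k=1}^{\infty}\lambda_k \phi_k\otimes \phi_k$ with $\lambda_k>0$, $\lim_{k\approach\infty}\lambda_k=0$, and $\{\phi_k\}_{k\in\Nbb}$ an orthonormal basis of $\Hcal$ (the basis property following from $\ker(A)=\{0\}$). For any $\gamma>0$ the operator $A+\gamma I$ is boundedly invertible with $(A+\gamma I)^{-1} = \sum_{k=1}^{\infty}(\lambda_k+\gamma)^{-1}\phi_k\otimes\phi_k$, so
\begin{align*}
\la x,(A+\gamma I)^{-1}x\ra \;=\; \sum_{k=1}^{\infty}\frac{\la x,\phi_k\ra^2}{\lambda_k+\gamma}.
\end{align*}

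For the first case, assume $x\in\range(A^{1/2})$ and write $x=A^{1/2}y$; since $A^{1/2}$ is injective, $y=A^{-1/2}x$ is uniquely defined. Then $\la x,\phi_k\ra = \sqrt{\lambda_k}\,\la y,\phi_k\ra$, hence
\begin{align*}
\la x,(A+\gamma I)^{-1}x\ra \;=\; \sum_{k=1}^{\infty}\frac{\lambda_k}{\lambda_k+\gamma}\la y,\phi_k\ra^2.
\end{align*}
Each summand is nonnegative, increases monotonically in $\gamma^{-1}$ as $\gamma\approach 0^{+}$, and is bounded by $\la y,\phi_k\ra^2$, which is summable with sum $\|y\|^2$. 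The monotone convergence theorem therefore yields the limit $\sum_k\la y,\phi_k\ra^2 = \|y\|^2 = \|A^{-1/2}x\|^2$.

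For the second case, recall the classical characterization that $x\in\range(A^{1/2})$ if and only if $\sum_{k}\la x,\phi_k\ra^2/\lambda_k<\infty$. Hence if $x\notin\range(A^{1/2})$ this series diverges. Applying monotone convergence once more to the nonnegative, increasing-in-$\gamma^{-1}$ sequence $\la x,\phi_k\ra^2/(\lambda_k+\gamma)$ gives
\begin{align*}
\lim_{\gamma\approach 0^{+}}\la x,(A+\gamma I)^{-1}x\ra \;=\; \sum_{k=1}^{\infty}\frac{\la x,\phi_k\ra^2}{\lambda_k} \;=\; +\infty,
\end{align*}
completing the proof. There is no substantial obstacle: the only subtlety is making sure $y=A^{-1/2}x$ is interpreted correctly as the unique preimage of $x$ under the injective operator $A^{1/2}$, and invoking the range characterization of $A^{1/2}$ via the eigenvalue series in the second case.
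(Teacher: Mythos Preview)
Your proof is correct and complete. The paper itself does not prove this lemma; it is simply quoted from \cite{Minh:2020regularizedDiv} and used as a black box, so there is no in-paper argument to compare against. Your spectral-decomposition-plus-monotone-convergence approach is the standard one for this type of result, and both cases are handled cleanly, including the appeal to the Picard-type characterization $x\in\range(A^{1/2})\iff\sum_k\la x,\phi_k\ra^2/\lambda_k<\infty$.
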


\begin{lemma}
	\label{lemma:equivalent-norm-C-m}
	Let $\Ncal(m_0,C_0)$, $\Ncal(m,C)$, $\ker(C) = \ker(C_0) = \{0\}$, be two equivalent Gaussian measures on $\Hcal$, with
	$C = C_0^{1/2}(I-S)C_0^{1/2}$, where $S \in \SymHS(\Hcal)_{< I}$. Then
	\begin{align}
		||C^{-1/2}x|| = ||(I-S)^{-1/2}C_0^{-1/2}x||, \;\; \forall x \in \range(C^{1/2}) = \range(C_0^{1/2}).
	\end{align}
\end{lemma}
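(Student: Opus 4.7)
The plan is to use the polar decomposition of the operator $A = (I-S)^{1/2}C_0^{1/2}$, which is the same technique already employed in the proof of Proposition \ref{proposition:KL-divergence-equivalent-Gaussian-measures-mu-star}. First I would observe that $A^{*}A = C_0^{1/2}(I-S)C_0^{1/2} = C$, so the modulus of $A$ equals $C^{1/2}$, and the polar decomposition gives $A = (I-S)^{1/2}C_0^{1/2} = UC^{1/2}$ for some partial isometry $U$ from $\overline{\range(C^{1/2})}$ onto $\overline{\range(A)}$.

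Next I would upgrade $U$ to a unitary on $\Hcal$. Since $\ker(C) = \{0\}$, we have $\overline{\range(C^{1/2})} = \Hcal$. On the other side, $\ker(C_0) = \{0\}$ gives $\overline{\range(C_0^{1/2})} = \Hcal$, and because $I-S$ is positive with bounded inverse (its eigenvalues $1-\alpha_k$ tend to $1$ and are bounded below by a positive constant), $(I-S)^{1/2}$ is an isomorphism of $\Hcal$, so $\range(A) = (I-S)^{1/2}\range(C_0^{1/2})$ is dense in $\Hcal$. Hence $U \in \Ubb(\Hcal)$. Taking adjoints of $UC^{1/2} = (I-S)^{1/2}C_0^{1/2}$ yields the key identity
\begin{equation*}
C^{1/2} = C_0^{1/2}(I-S)^{1/2}U.
\end{equation*}

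With this identity in hand, the conclusion follows by direct computation. For $x \in \range(C^{1/2}) = \range(C_0^{1/2})$ (the equality is Lemma \ref{lemma:equivalent-Gaussian-covariance-range}), write $x = C^{1/2}y$ with $y = C^{-1/2}x \in \Hcal$. From the identity above, $x = C_0^{1/2}[(I-S)^{1/2}Uy]$, and since $(I-S)^{1/2}Uy \in \Hcal$, the definition of $C_0^{-1/2}$ on $\range(C_0^{1/2})$ gives $C_0^{-1/2}x = (I-S)^{1/2}Uy$. Applying the bounded operator $(I-S)^{-1/2}$ yields $(I-S)^{-1/2}C_0^{-1/2}x = Uy$. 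Since $U$ is unitary,
\begin{equation*}
\|(I-S)^{-1/2}C_0^{-1/2}x\| = \|Uy\| = \|y\| = \|C^{-1/2}x\|.
\end{equation*}

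The only real subtlety is the promotion of the partial isometry $U$ to a unitary; once this is established, the rest is routine bookkeeping with the polar decomposition. The density of $\range(A)$ in particular rests on $I-S$ being positive definite (with bounded inverse), which is guaranteed by $S \in \SymHS(\Hcal)_{< I}$ since the eigenvalues of $S$ form a sequence tending to zero with supremum strictly less than $1$.
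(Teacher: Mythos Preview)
Your proof is correct and takes essentially the same approach as the paper: polar decomposition of $(I-S)^{1/2}C_0^{1/2} = VC^{1/2}$ with $V\in\Ubb(\Hcal)$, followed by the identity $C^{1/2}=C_0^{1/2}(I-S)^{1/2}V$ and the observation that $V$ (your $U$) preserves norms. Your justification for why the partial isometry is in fact unitary is more explicit than the paper's, which simply asserts $V\in\Ubb(\Hcal)$; otherwise the two arguments are identical.
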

\begin{proof}
	We have $\range(C^{1/2}) = \range(C_0^{1/2})$ by Lemma \ref{lemma:equivalent-Gaussian-covariance-range}.
	Since $\ker(C_0) = 0$, the operator $(I-S)^{1/2}C_0^{1/2}$ admits the following polar decomposition
	\begin{align*}
		(I-S)^{1/2}C_0^{1/2} = V[C_0^{1/2}(I-S)C_0^{1/2}]^{1/2},
	\end{align*}
	with $V \in \Ubb(\Hcal)$, from which it follows that
	\begin{align*}
		C^{1/2} = [C_0^{1/2}(I-S)C_0^{1/2}]^{1/2} = V^{*}(I-S)^{1/2}C_0^{1/2} = C_0^{1/2}(I-S)^{1/2}V.
	\end{align*}
	It follows that for  $x \in \range(C_0^{1/2}) = \range(C^{1/2})$,
	\begin{align*}
	C^{-1/2}x = y =  V^{*}(I-S)^{-1/2}C_0^{-1/2}x,
	\end{align*}
	as can be verified directly that $C^{1/2}y = x$. Thus
	\begin{align*}
		||C^{-1/2}x|| = [C_0^{1/2}(I-S)C_0^{1/2}]^{-1/2}x|| &= ||V^{*}(I-S)^{-1/2}C_0^{-1/2}x||
		\\
		& = ||(I-S)^{-1/2}C_0^{-1/2}x||.
	\end{align*}
	\qed
\end{proof}

\begin{proposition}
	\label{proposition:limit-regularized-JS-mean-term}
	Let $\Ncal(m_i,C_i) \in \Gauss(\Hcal, \mu_{*})$. Let
	$m_{\alpha}\in \Hcal, C_{\alpha}\in \Sym(\Hcal)\cap\Tr(\Hcal)$ be defined as in Theorem \ref{theorem:geometric-interpolation-equivalent-Gaussian-Hilbert-space}.
	Let $C_{\alpha,\gamma}$ be defined as in Eq.\eqref{equation:C-alpha-gamma}. Then
	\begin{align}
		\lim_{\gamma \approach 0^{+}}||C_{\alpha,\gamma}^{-1/2}(m_i - m_{\alpha})|| = ||C_{\alpha}^{-1/2}(m_i - m_{\alpha})||, \;\; i=0,1.
	\end{align}
\end{proposition}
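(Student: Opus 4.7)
The plan is to split the regularized quadratic form via \eqref{equation:C-alpha-gamma}, pass to the $\gamma \approach 0^{+}$ limit termwise using Lemma \ref{lemma:limit-quadratic-form}, and then recognize the resulting convex combination as $||C_{\alpha}^{-1/2}(m_i - m_{\alpha})||^2$ via the defining identity $(I-S_{\alpha})^{-1} = (1-\alpha)(I-S_0)^{-1} + \alpha(I-S_1)^{-1}$.

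First I would write, directly from the definition \eqref{equation:C-alpha-gamma},
\begin{align*}
||C_{\alpha,\gamma}^{-1/2}(m_i - m_{\alpha})||^2 &= \la m_i - m_{\alpha}, C_{\alpha,\gamma}^{-1}(m_i - m_{\alpha})\ra \\
&= (1-\alpha)\la m_i - m_{\alpha}, (C_0+\gamma I)^{-1}(m_i - m_{\alpha})\ra \\
&\quad + \alpha \la m_i - m_{\alpha}, (C_1+\gamma I)^{-1}(m_i - m_{\alpha})\ra.
\end{align*}
Next I would verify the Cameron--Martin type membership $m_i - m_{\alpha} \in \range(C_{*}^{1/2})$. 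By the characterization \eqref{equation:Gauss-H-mustar} of $\Gauss(\Hcal, \mu_{*})$ we have $m_i - m_{*} \in \range(C_{*}^{1/2})$, while the explicit formula \eqref{equation:m-alpha} for $m_{\alpha}$ shows $m_{\alpha} - m_{*} \in \range(C_{*}^{1/2})$; hence the difference lies in $\range(C_{*}^{1/2})$. By Lemma \ref{lemma:equivalent-Gaussian-covariance-range}, this common range equals $\range(C_0^{1/2}) = \range(C_1^{1/2}) = \range(C_{\alpha}^{1/2})$, so each of $||C_j^{-1/2}(m_i - m_{\alpha})||$, $j \in \{0,1,\alpha\}$, is finite and well-defined.

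With this membership in hand, I would apply Lemma \ref{lemma:limit-quadratic-form} termwise to obtain
\begin{align*}
\lim_{\gamma \approach 0^{+}} ||C_{\alpha,\gamma}^{-1/2}(m_i - m_{\alpha})||^2 &= (1-\alpha)||C_0^{-1/2}(m_i - m_{\alpha})||^2 \\
&\quad + \alpha ||C_1^{-1/2}(m_i - m_{\alpha})||^2.
\end{align*}
Finally, using Lemma \ref{lemma:equivalent-norm-C-m} applied with reference measure $\mu_{*}$ to each of $C_0, C_1, C_{\alpha}$, I would rewrite $||C_j^{-1/2}(m_i - m_{\alpha})||^2 = \la C_{*}^{-1/2}(m_i - m_{\alpha}), (I-S_j)^{-1}C_{*}^{-1/2}(m_i - m_{\alpha})\ra$ for $j \in \{0,1,\alpha\}$, and combine via $(I-S_{\alpha})^{-1} = (1-\alpha)(I-S_0)^{-1} + \alpha(I-S_1)^{-1}$ from Theorem \ref{theorem:geometric-interpolation-equivalent-Gaussian-Hilbert-space} to get
\begin{align*}
||C_{\alpha}^{-1/2}(m_i - m_{\alpha})||^2 = (1-\alpha)||C_0^{-1/2}(m_i - m_{\alpha})||^2 + \alpha ||C_1^{-1/2}(m_i - m_{\alpha})||^2,
\end{align*}
which matches the limit above.

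The main technical point is the termwise application of Lemma \ref{lemma:limit-quadratic-form}: it relies on $m_i - m_{\alpha}$ lying simultaneously in $\range(C_0^{1/2})$ and $\range(C_1^{1/2})$, without which an individual term would diverge to infinity. This membership is exactly what the Feldman--H\'ajek characterization \eqref{equation:Gauss-H-mustar} supplies, via the hypothesis $\mu_0, \mu_1, \mu_{\alpha} \in \Gauss(\Hcal, \mu_{*})$; once this is secured, the remaining algebraic manipulation is straightforward.
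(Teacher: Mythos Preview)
Your proposal is correct and follows essentially the same route as the paper's own proof: split $C_{\alpha,\gamma}^{-1}$ into the convex combination of $(C_0+\gamma I)^{-1}$ and $(C_1+\gamma I)^{-1}$, pass to the limit termwise via Lemma \ref{lemma:limit-quadratic-form}, and then reassemble using Lemma \ref{lemma:equivalent-norm-C-m} together with $(I-S_{\alpha})^{-1} = (1-\alpha)(I-S_0)^{-1} + \alpha(I-S_1)^{-1}$. If anything, your write-up is slightly more explicit than the paper's in verifying the range membership $m_i - m_{\alpha} \in \range(C_{*}^{1/2}) = \range(C_0^{1/2}) = \range(C_1^{1/2})$ needed to invoke Lemma \ref{lemma:limit-quadratic-form} in the finite branch.
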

\begin{proof}
	[\textbf{of Proposition \ref{proposition:limit-regularized-JS-mean-term}}]
	It suffices to prove for $i=0$.
	With $C_{\alpha,\gamma}=	[(1-\alpha)(C_0 + \gamma I)^{-1} + \alpha (C_1 + \gamma I)^{-1}]^{-1}$, by 
	Lemma \ref{lemma:limit-quadratic-form} we have
	\begin{align*}
		&\lim_{\gamma \approach 0^{+}}||C_{\alpha,\gamma}^{-1/2}(m_0 - m_{\alpha})||^2 
		= \lim_{\gamma \approach 0^{+}}\la m_0 - m_{\alpha}, C_{\alpha,\gamma}^{-1}(m_0 - m_{\alpha})\ra
		\\
		& = \lim_{\gamma \approach 0^{+}}\la m_0 - m_{\alpha}, [(1-\alpha)(C_0 + \gamma I)^{-1} + \alpha (C_1 + \gamma I)^{-1}](m_0 - m_{\alpha})\ra
		\\
		& = (1-\alpha)||C_0^{-1/2}(m_0 - m_{\alpha})||^2 + \alpha ||C_1^{-1/2}(m_0-m_{\alpha})||^2.
	\end{align*}
	With $C_i = C_{*}^{1/2}(I-S_i)C_{*}^{1/2}$, $i=0,1$, by Lemma \ref{lemma:equivalent-norm-C-m},
	\begin{align*}
		||C_i^{-1/2}(m_0 - m_{\alpha})||^2 &= ||(I-S_i)^{-1/2}C_{*}^{-1/2}(m_0 - m_{\alpha})||^2 
		\\
		& = \la C_{*}^{-1/2}(m_0 - m_{\alpha}), (I-S_i)^{-1}C_{*}^{-1/2}(m_0 - m_{\alpha})\ra.
	\end{align*}
	Combining this with the previous expression gives
	\begin{align*}
		&\lim_{\gamma \approach 0^{+}}||C_{\alpha,\gamma}^{-1/2}(m_0 - m_{\alpha})||^2 
		\\
		&= 
		\la C_{*}^{-1/2}(m_0 - m_{\alpha}), [(1-\alpha)(I-S_0)^{-1} + \alpha (I-S_1)^{-1}]C_{*}^{-1/2}(m_0-m_{\alpha})\ra
		\\
		& = \la C_{*}^{-1/2}(m_0 - m_{\alpha}), (I-S_{\alpha})^{-1}C_{*}^{-1/2}(m_0 - m_{\alpha})\ra
		\\
		& = ||(I-S_{\alpha})^{-1/2}C_{*}^{-1/2}(m_0 - m_{\alpha})||^2 = ||C_{\alpha}^{-1/2}(m_0 - m_{\alpha})||^2,
	\end{align*}
	where the last equality follows from Lemma \ref{lemma:equivalent-norm-C-m}.
	\qed
\end{proof}

\begin{proof}
	[\textbf{of Theorem \ref{theorem:limit-regularized-JS-Gaussian}}]
	This follows by combining Propositions \ref{proposition:logdet-C0-C-alpha-gammma-0} and \ref{proposition:limit-regularized-JS-mean-term} with Theorem \ref{theorem:geometric-JS-equivalent-Gaussian-measures-Hilbert-space}.
	\qed
\end{proof}


{\bf Competing interests}. The author declares that there are no competing interests, financial or otherwise.

\bibliographystyle{plain}
\bibliography{../cite_RKHS}
\end{document}